\def\jb#1{\langle#1\rangle} 
\def\norm#1{\|#1\|}
\def\brk#1{\left(#1\right)}
\def\normb#1{\big\|#1\big\|} 
\def\normo#1{\left\|#1\right\|}
\def\aabs#1{\left|#1\right|}
\def\abs#1{\big|#1\big|}
\newcommand{\les}{\lesssim} 
\newcommand{\ges}{\gtrsim}
\newcommand{\wt}{\widetilde}
\newcommand{\Lr}{{\mathcal{L}}}
\newcommand{\BR}[1]{\left[#1\right]}
\newcommand{\Br}[1]{\left\{#1\right\}}
\newcommand{\F}{\mathcal{F}}
\newcommand{\C}{\mathbb{C}}
\newcommand{\N}{\mathbb{N}} 
\newcommand{\R}{\mathbb{R}}
\newcommand{\Z}{\mathbb{Z}}
\newcommand{\E}{\mathbb{E}} 
\newcommand{\cir}{\mathbb{S}}
\newcommand{\al}{\alpha} 
\newcommand{\be}{\beta}
\newcommand{\ga}{\gamma} 
\newcommand{\e}{\varepsilon} 
\newcommand{\om}{\omega} 
\newcommand{\te}{\theta}
\newcommand{\x}{\xi}
\newcommand{\ro}{\rho} 
\newcommand{\ft}{{\mathcal{F}}}
\newcommand{\Hl}{{\mathcal{H}}}
\newcommand{\De}{\Delta} 
\newcommand{\Des}{\Delta_\sigma}
\newcommand{\p}{\partial} \newcommand{\na}{\nabla}
\newcommand{\re}{\mathop{\mathrm{Re}}}
\newcommand{\im}{\mathop{\mathrm{Im}}}
\newcommand{\lec}{\lesssim} 
   \newcommand{\I}{\infty}
\newcommand{\EQ}[1]{\begin{equation}\begin{split}
      #1 \end{split}\end{equation}} \setlength{\marginparwidth}{2cm}
\newcommand{\EQN}[1]{\begin{equation*}\begin{split}
      #1 \end{split}\end{equation*}} \setlength{\marginparwidth}{2cm}
 \newcommand{\Del}[1]{}
\newcommand{\pt}{&} \newcommand{\pr}{\\ &} \newcommand{\pq}{\quad}
 \newcommand{\prq}{\\ &\quad} \newcommand{\prQ}{\\
  &\qquad} 
\numberwithin{equation}{section}
\newtheorem{thm}{Theorem}[section] 
\newtheorem{cor}[thm]{Corollary}
\newtheorem{lem}[thm]{Lemma}
\theoremstyle{remark} 
\newtheorem{rem}[thm]{Remark}
\newtheorem*{exam*}{Examples}
\begin{document}
\subjclass[2010]{35L70, 35Q55} 
\keywords{Nonlinear wave equation,
  Nonlinear Schr\"odinger equation, Gross-Pitaevskii equation, 
  Scattering}

\title[3D Gross-Pitaevskii equation]{Scattering for the
3D Gross-Pitaevskii equation} 
  
\author[Z. Guo, Z. Hani, K. Nakanishi]{Zihua Guo, Zaher Hani, Kenji Nakanishi} 
\address{School of Mathematical Sciences, Monash University, VIC 3800, Australia}
\email{zihua.guo@monash.edu}

\address{School of Mathematics, Georgia Institute of Technology, Atlanta, GA 30332, USA}
\email{hani@math.gatech.edu}

\address{Department of Pure and Applied Mathematics Graduate School of Information Science and Technology Osaka University, Suita, Osaka 565-0871, JAPAN}
\email{nakanishi@ist.osaka-u.ac.jp}

\thanks{Z. H. was supported by a Sloan Fellowship, National Science Foundation grant DMS-1600561, and a startup fund from Georgia Institute of Technology.}

\begin{abstract}
We study the Cauchy problem for the 3D Gross-Pitaevskii equation.  The global well-posedness in the natural energy space was proved by G\'erard \cite{Gerard}. In this paper we prove scattering for small data in the same space with some additional angular regularity, and in particular in the radial case we obtain small energy scattering. 
\end{abstract}

\maketitle

\tableofcontents

\section{Introduction}

\subsection{Main results}
In this paper we study the asymptotical behaviour of the solution to the 3D Gross-Pitaevskii (GP) equation
\begin{align}\label{eq:3dGP}
i\psi_t+\Delta \psi=(|\psi|^2-1)\psi, \quad \psi:\R^{1+3}\to \C
\end{align}
with the boundary condition 
\begin{align}\label{eq:bdcon}
\lim_{|x|\to \infty }\psi=1.
\end{align}
It is easy to see the equation \eqref{eq:3dGP} is formally equivalent to cubic Schr\"odinger equations.  Indeed, let $\phi=e^{-it}\psi$, then $\phi$ solves 
 \begin{align}\label{eq:3dcNLS}
i\phi_t+\Delta \phi=|\phi|^2\phi.
\end{align}
The NLS \eqref{eq:3dcNLS} has been extensively studied for initial data in $H^s$ or other spaces with boundary condition $\lim_{|x|\to \infty }\phi=0$.  However, the nonzero boundary condition \eqref{eq:bdcon} or more generally  $\lim_{|x|\to \infty} |\psi|=1$, also arises naturally in physical contexts such as Bose-Einstein condensates, superfluids and nonlinear optics, or in the hydrodynamic interpretation of NLS (see \cite{FS}).  In these physical settings, $\phi=e^{-it}$ (or $\psi=1$) corresponds to a stationary, constant-density condensate. On the other hand, these type of boundary conditions bring remarkable effects on the space-time behaviour of the solutions.

The GP equation \eqref{eq:3dGP} has rich structures.  
Let $u=\psi-1$ be the perturbation from the equilibrium. Then $u$ satisfies
\EQ{\label{eq:GP2}
i\partial_tu+\Delta u-2\re u=&u^2+2|u|^2+|u|^2u,\\
 u|_{t=0}=&u_0.
}
We have conservation of the energy: if $u$ is a smooth solution to \eqref{eq:GP2} then
\begin{align*}
E(u):=&\int_{\R^3}|\nabla u|^2+\frac{(|u|^2+2\re u)^2}{2}dx=\int_{\R^3}|\nabla \psi|^2+\frac{(|\psi|^2-1)^2}{2}dx\\
=&E(u_0).
\end{align*}
An unconditional global well-posedness for \eqref{eq:GP2} in the energy space $\E$ was proved by G\'erard \cite{Gerard}, where
\begin{align}
\E:=\{f\in \dot H^1(\R^3): 2\re f+|f|^2\in L^2(\R^3)\}
\end{align}
with the distance $d_\E(f,g)$ defined by
\begin{align}
d_\E(f,g)^2=\norm{\nabla (f-g)}_{L^2}^2+\frac{1}{2}\normb{|f|^2+2\re f-|g|^2-2\re g}_{L^2}^2.
\end{align}
Note that $(\E,d_\E)$ is a complete metric space.
Global well-posedness for \eqref{eq:GP2} in a smaller space $H^1(\R^3)$ was previously proved in \cite{BSaut} where a-priori $L^2$-bound was derived by the Gronwall inequality:
\begin{align}\label{eq:L2bd}
\norm{u(t)}_{L_x^2}\leq C\norm{u(0)}_{L_x^2}\cdot e^{Ct}.
\end{align} 

The global behavior of the solutions to GP equation was also extensively studied. It was known (see \cite{BSaut, Chiron}) that there is a family of traveling wave solutions of the form $\psi(t,x)=v_c(x-ct)$ with finite energy for $0<|c|<\sqrt{2}$, where $\sqrt{2}$ is the sound speed. Moreover, it was proved in \cite{BGS} that there is a lower bound on the energy of all possible travelling waves for \eqref{eq:3dGP} in three dimensions:
\begin{align}
E^*:=\inf \{E(\psi-1)| \psi(t,x)=v(x-ct)\mbox{ solves }\eqref{eq:3dGP}\mbox{ for some } c\}>0,
\end{align}
and it was conjectured in \cite{BGS} that $E^*$ is the threshold for the global dispersive solutions, namely the solutions to GP with energy below the threshold $E^*$ should be dispersive (they scatter to a linear solution). The scattering problems for suitable small solutions were studied by the third named author, Gustafson and Tsai  (see \cite{GNT1,GNT2,GNT3}).
In these works they proved that under some decay and regularity conditions (weighted Sobolev space) on the initial data, the small solutions scatter to the solution of the linearized equation in dimension three and higher. In 3D, due to the quadratic nonlinearity, the weighted space rather than the energy space was essentially needed in these works.

The purpose of this paper is to study the scattering problem for the 3D GP equation without assuming decay conditions, and thus improve the previous results in \cite{GNT3}. Instead of decay conditions we assume additional angular regularity, namely we have additional information for $D_\sigma u_0$, the angular derivative of initial data $u_0$ (See Section 1.2 for the precise definitions). 
In particular, we can obtain scattering for small radial data in the energy space $\E$. This will open the possibility to study the large data problem. We note that in the radial case there is no non-trivial travelling wave solution, 
so it is reasonable to conjecture that in the radial case the
smallness condition is not needed. To state our results, we linearise the equation \eqref{eq:GP2} around 0 as in \cite{GNT1}.  By the diagonalising transform
\begin{align}
u=u_1+iu_2\longrightarrow v=v_1+iv_2:=u_1+iUu_2,
\end{align}
we get the equation for $v$:
\begin{align}\label{eq:GP3}
i\partial_t v-Hv=U(3u_1^2+u_2^2+|u|^2u_1)+i(2u_1u_2+|u|^2u_2),
\end{align}
where
\begin{align}\label{eq:UH}
U:=\sqrt{-\Delta(2-\Delta)^{-1}}, \quad H:=\sqrt{-\Delta (2-\Delta)}.
\end{align}
We may rewrite the nonlinear terms in \eqref{eq:GP3} as functions of $v$ by the simple inverse $u_1+iu_2=v_1+iU^{-1}v_2$.  Then we get a nonlinear Schr\"odinger type equations with quadratic and cubic terms. Note that $H$ behaves like Schr\"odinger at high frequency and wave at low frequency. We will study systematically the estimates for the linear propagator $e^{-itH}$ in Section 2. 
The main result of this paper is

\begin{thm}\label{thm:main}
There exists $\delta>0$ such that for any $u_0\in \E^1=\{f\in \E, D_\sigma f\in \E\}$ with $E(u_0)+E(D_\sigma u_0)\leq \delta$ there exists a unique global solution $u\in C(\R:\E^1)$ to \eqref{eq:GP2}. Moreover, let $T(u)=u_1+{(2-\Delta)^{-1}u_2^2}+iUu_2$, then there exists $\phi_{\pm}\in H^{1,1}=\{f\in H^1, D_\sigma f\in H^1\}$ such that
\begin{align}
\lim_{t\to \pm \infty}(\norm{T(u)-e^{-itH}\phi_\pm}_{H^1}+\norm{D_\sigma(T(u)-e^{-itH}\phi_\pm)}_{H^1})=0.
\end{align}
\end{thm}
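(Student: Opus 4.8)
The plan is to set up a contraction/bootstrap argument for the equation \eqref{eq:GP3} in a suitable Strichartz-type space adapted to the propagator $e^{-itH}$, using the angular regularity to close the estimates for the quadratic terms. First I would fix a resolution space $X$ of the form $X = X_0 \cap D_\sigma^{-1} X_0$, where $X_0$ is built from the global-in-time Strichartz, local smoothing, and maximal-function norms associated with $H$ that are established in Section 2, all at the $L^2$-based $H^1$ regularity level, together with the energy norm $L^\infty_t H^1$. The key point is that $D_\sigma$ commutes with $H$ and with $e^{-itH}$ (both are radial Fourier multipliers), so applying $D_\sigma$ to \eqref{eq:GP3} produces an equation of the same structure with an extra derivative hitting the nonlinearity by the product/Leibniz rule for $D_\sigma$. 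Thus it suffices to prove a single nonlinear estimate $\|\Phi(v)\|_X \lesssim \|u_0\|_{\text{data}} + \|v\|_X^2 + \|v\|_X^3$ for the Duhamel map $\Phi(v) = e^{-itH}v(0) - i\int_0^t e^{-i(t-s)H}N(v)\,ds$, and a matching difference estimate, then invoke the contraction mapping principle in a small ball of $X$.

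The heart of the matter is the quadratic term $U(3u_1^2 + u_2^2) + 2iu_1u_2$, schematically $U(v\cdot v)$ after substituting $u = v_1 + iU^{-1}v_2$. In 3D, quadratic nonlinearities in NLS are notoriously supercritical with respect to decay, which is why \cite{GNT1,GNT2,GNT3} needed weighted spaces. Here the replacement for weights is angular regularity: the key analytic input is that $e^{-itH}$ satisfies improved (non-radial) decay and smoothing estimates when acting on functions with angular derivatives — for instance, the $L^2_t$ local-smoothing / radial-improvement estimates for $e^{-itH}$ from Section 2 that gain integrability away from a neighborhood of the origin in frequency, and the low-frequency wave-like behavior of $H$ that must be handled by wave-type Strichartz or Morawetz bounds. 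I would split the nonlinear estimate into frequency regimes: (i) high-frequency output, where $H \sim -\Delta$ and standard Schr\"odinger Strichartz plus the smoothing estimate handle the quadratic term much as in the cubic case; (ii) low-frequency output, where the factor $U \sim |\nabla|$ in front of the quadratic term provides a crucial derivative gain, compensating the bad low-frequency behavior, and where one uses wave-Strichartz together with the angular-regularity improvement; (iii) interactions where one input is low and one is high. The cubic terms $U|u|^2u_1 + i|u|^2u_2$ are comparatively harmless and handled by the same Strichartz norms as in the standard small-data cubic NLS theory, with $U^{-1}$ on low frequencies again only helping.

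The main obstacle I expect is precisely the low-output, low-input region of the quadratic term, where $H$ behaves like the wave operator: there the dispersive decay is weaker ($t^{-1}$ rather than $t^{-3/2}$) and one genuinely needs the angular regularity to recover enough summability in dyadic frequency blocks — this is where the estimates of Section 2 on $e^{-itH}$ restricted to angularly-regular data must be applied with care, and where the precise form of the norms in $X_0$ is dictated. Once the master bilinear and trilinear estimates are in place, global existence and uniqueness in $C(\R;\E^1)$ follow from the fixed point, and scattering is immediate: the Duhamel integral $\int_0^{\pm\infty} e^{isH}N(v)\,ds$ converges in $H^{1,1}$ by the same estimates, defining $\phi_\pm$, and the difference $T(u) - e^{-itH}\phi_\pm \to 0$ in the stated norm. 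The role of the nonlinear correction $T(u) = u_1 + (2-\Delta)^{-1}u_2^2 + iUu_2$ rather than $v$ itself is to absorb the worst non-perturbative part of the quadratic nonlinearity (a normal-form / Poincar\'e–Dulac type change of variables), so that the remaining quadratic interactions in the equation for $T(u)$ are of the better null-form or high-low type; verifying that this substitution indeed removes the resonant quadratic term and that $T(u) - v$ is lower order is a computation I would carry out first, as it determines the exact form of $N$ to which all the estimates above are applied.
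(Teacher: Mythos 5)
Your overall architecture (contraction in a Strichartz space adapted to $e^{-itH}$, angular regularity via $D_\sigma$ commuting with the propagator, spherically averaged estimates to beat the 3D quadratic obstruction) matches the paper's, but there is a genuine gap at the center of the argument: you propose to close the fixed point for \eqref{eq:GP3} directly, treating the cubic terms as ``comparatively harmless,'' and you relegate the normal form to a final remark whose purpose you describe as removing a \emph{resonant quadratic} term. In the energy space this is exactly backwards. Rewriting the nonlinearity of \eqref{eq:GP3} in terms of $v$ requires $u_2=U^{-1}v_2$, and $U^{-1}\approx D^{-1}$ at low frequency; since $\E$ gives no $L^2$ control on $u_2=\im u$ (only $\nabla u_2\in L^2$ and $2u_1+|u|^2\in L^2$ are controlled by the energy), the cubic terms of the schematic form $u_2^2\,\Delta u_2$ and $u_2^2u_1$ with $u_2$ at very low frequency cannot be estimated -- the paper states this explicitly when explaining why the transform \eqref{eq:ntran} of the earlier works is insufficient here. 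The paper's actual fix is a \emph{different} change of variables, $z_1=u_1+(2-\Delta)^{-1}(2u_1^2+u_2^2)$, chosen so that the two worst cubic contributions cancel in $u_2^3$ at low frequency (the computation \eqref{eq:N31}); this costs quartic and quintic terms, and the quintic $O(u_2^5)$ piece is in fact the most singular term of all, surviving only because of a further hidden cancellation at zero frequency. None of this structure is present in your proposal, and without it the iteration does not close in $\E$ (it would close in the smaller space $H^1$, but that is not the theorem).

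A secondary correction: the quadratic terms are \emph{not} removed by the normal form. After the transform they persist as $U(m_1^2)$ plus terms carrying $\nabla u_2$, and they are estimated -- as you correctly anticipate -- by the generalized, spherically averaged Strichartz estimates of Section 2; the role of the transform is to eliminate the $U^{-1}$ singularity and manufacture the cubic cancellation, not to kill a quadratic resonance. So the part of your plan devoted to frequency-regime analysis of the quadratic interaction is broadly aligned with the paper (which uses $L_t^qL_x^rL_\sigma^2$ norms and a dual space $N$ rather than local smoothing or maximal function norms), but the identification of where the real difficulty lies, and the device that resolves it, are missing.
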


In particular, if $u_0\in \E$ is radial, then $D_\sigma u_0=0$ and hence $u_0\in \E^1$. Thus by the above theorem we obtain

\begin{thm}\label{thm:rad}
There exists $\delta>0$ such that for any $u_0\in \E$, radial,  with $E(u_0)\leq \delta$, there exists a unique global solution $u\in C(\R:\E)$ to \eqref{eq:GP2}. Moreover, there exists $\phi_{\pm}\in H^1$ such that
\begin{align}
\lim_{t\to \pm \infty}\normo{u_1+(2-\Delta)^{-1}u_2^2+iUu_2-e^{-itH}\phi_\pm}_{H^1}=0.
\end{align}
\end{thm}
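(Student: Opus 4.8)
\emph{Proof strategy.} Theorem~\ref{thm:rad} is the radial specialisation of Theorem~\ref{thm:main}: a radial $u_0\in\E$ has $D_\sigma u_0=0$, hence $u_0\in\E^1$ and $E(u_0)+E(D_\sigma u_0)=E(u_0)\le\delta$; since rotations commute with $\Delta$ (hence with $H$, $U^{\pm1}$, $(2-\Delta)^{-1}$) and with the flow of \eqref{eq:GP2}, the solution stays radial and every $D_\sigma$-term in the conclusion of Theorem~\ref{thm:main} vanishes identically, leaving precisely the assertion of Theorem~\ref{thm:rad}. So it suffices to prove Theorem~\ref{thm:main}, and I sketch how I would do this. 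One works with the diagonalised unknown $v=u_1+iUu_2$ solving \eqref{eq:GP3}, where the nonlinearity is re-expressed in $v$ through $u_1+iu_2=v_1+iU^{-1}v_2$. The genuine difficulty is the quadratic part of the nonlinearity: in three dimensions the $t^{-3/2}$ decay of $e^{-itH}$ is exactly borderline for quadratic terms, $H$ degenerates to the wave operator $\sqrt2\,|\nabla|$ at low frequency, and $U^{-1}\sim|\nabla|^{-1}$ near $\xi=0$ creates an honest low-frequency singularity in $u_2=U^{-1}v_2$. I would first remove the worst quadratic interactions by a normal-form (Poincar\'e--Dulac) transformation: integrating by parts in time in the Duhamel formula converts each non-resonant quadratic term into a cubic term plus boundary terms, while the single resonant quadratic piece (morally $Uu_2^2$) is cancelled by the explicit quadratic correction in $T(u)=u_1+(2-\Delta)^{-1}u_2^2+iUu_2=v+(2-\Delta)^{-1}u_2^2$. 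After this change of unknown the equation for $w=T(u)$ reads schematically $i\partial_t w-Hw=(\text{cubic in }u)+(\text{benign quadratic remainder})$, the remainder carrying either a spatial smoothing, a null structure, or a time derivative on a quadratic factor.

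Next I would set up the resolution space. Let $X$ be a space-time norm on $\R^{1+3}$ built from the Strichartz estimates for $e^{-itH}$ and from their angular-regularity-improved versions established in Section~2, controlling simultaneously $w$ and $D_\sigma w$; since $D_\sigma$ is built from rotation generators it commutes with $H$ and with all the radial Fourier multipliers above and obeys a Leibniz rule on products, so differentiating the normal-formed equation in the angular variable is harmless. Smallness of $E(u_0)+E(D_\sigma u_0)$ bounds the data norms of $v$ and $D_\sigma v$ in $\dot H^1$ together with the $L^2$-norm of $2\re u+|u|^2$, and one solves the Duhamel equation for $w$ by a contraction in a small ball of $X$: the cubic terms are handled by routine Strichartz/H\"older/Sobolev, while the quadratic remainders are closed using the structure gained from the normal form together with the improved estimates. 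The decisive point --- this is what replaces the weighted Sobolev spaces of \cite{GNT3} --- is that the relevant bilinear (and trilinear) space-time estimates, which fail for general $\dot H^1$ data, do hold once a single angular derivative is present, and in particular for radial data, thereby compensating for the weak low-frequency dispersion of $H$.

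Finally, scattering: from the global bound $\|w\|_X<\infty$ the Duhamel tails $\int_t^{\pm\infty}e^{isH}(\text{nonlinearity})\,ds$ tend to $0$ in $H^1$ (and under $D_\sigma$) as $t\to\pm\infty$, so $e^{itH}w(t)$ converges in $H^{1,1}$ to some $\phi_\pm$, i.e.\ $T(u)-e^{-itH}\phi_\pm\to0$ in $H^1$ and in the $D_\sigma$-sense. Transferring back, $u$ is recovered from $v$ by the inverse diagonalising transform $u_1+iu_2=v_1+iU^{-1}v_2$, and $w-v=(2-\Delta)^{-1}u_2^2$ is a small quadratic quantity that also decays by the same estimates, which yields the stated asymptotics for $v$ and hence for $\psi=1+u_1+iu_2$; persistence of regularity gives $u\in C(\R:\E^1)$.

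The main obstacle is the quadratic nonlinearity at low frequency. Concretely: (i) analysing the resonance set of the GP dispersion relation $\omega(\xi)=\sqrt{|\xi|^2(2+|\xi|^2)}$ and arranging the normal form so that only an innocuous resonant remainder survives; (ii) taming the $U^{-1}\sim|\nabla|^{-1}$ low-frequency singularity (via Hardy-type and bilinear estimates); and above all (iii) proving the angular-regularity-improved bilinear/trilinear estimates for $e^{-itH}$ that make all nonlinear terms time-integrable with no decay or weight assumption on $u_0$. Once these are in hand the fixed-point argument and the scattering construction are standard.
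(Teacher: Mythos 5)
Your reduction of Theorem~\ref{thm:rad} to Theorem~\ref{thm:main} (radial data gives $D_\sigma u_0=0$, rotations commute with the flow and with all the radial multipliers) is exactly what the paper does, and your identification of the three obstacles --- borderline quadratic terms in 3D, the $U^{-1}\sim|\nabla|^{-1}$ singularity, and the need for angular regularity in place of weights --- is on target. However, the mechanism you propose for the quadratic terms is not the paper's and, as stated, has a genuine gap. You propose a Poincar\'e--Dulac normal form implemented by integration by parts in time in the Duhamel formula, converting non-resonant quadratic terms into cubic ones, with the resonant piece cancelled by the correction $(2-\Delta)^{-1}u_2^2$. The paper does no time integration by parts: the transform \eqref{eq:ntran2} is an algebraic change of unknown $z_1=u_1+\frac{2u_1^2+u_2^2}{2-\De}$ applied directly to the equation, and the quadratic terms are \emph{not} eliminated --- they survive as $U(m_1^2)$ and $\frac{2i}{2-\De}[-3m_1\De u_2-2\na m_1\cdot\na u_2]$ and are estimated head-on in non-admissible, spherically averaged Strichartz norms ($L^{5/2}_tL^5_x$, $L^3_{t,x}$, built from the $L^q_tL^r_xL^2_\sigma$ linear estimates of Section~3, not from bilinear estimates). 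This is precisely ingredient (2) of the introduction: angular regularity is what makes the mass-subcritical quadratic interaction time-integrable without weights. Your route would instead have to control the quadratic boundary terms of the normal form in $H^1$ and handle the division by the resonance function of $\om(\x)=|\x|\sqrt{2+|\x|^2}$, which degenerates like a wave symbol at low frequency; you give no argument that this closes, and the previous works \cite{GNT1,GNT3} needed weighted spaces exactly at this point.

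The second gap is that you locate the low-frequency danger entirely in $U^{-1}$, whereas the decisive difficulty in the energy space $\E$ is the absence of any $L^2$ control on $u_2=\im u$ (only $2u_1+|u|^2$ is controlled by the energy). This is ingredient (3) of the introduction and it is what dictates the \emph{coefficient} in the transform: the paper uses $\frac{2u_1^2+u_2^2}{2-\De}$ rather than the $\frac{u_1^2+u_2^2}{2-\De}$ of \eqref{eq:ntran} precisely so that the cubic terms exhibit the cancellation \eqref{eq:N31}, which removes the otherwise fatal $u_2^3$ interaction with all three factors at low frequency (and a further hidden cancellation occurs at the quintic level in $N_3^c+N_5^c$). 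Your proposal never addresses the cubic and quintic terms with low-frequency $u_2$ factors, which reappear after \emph{any} quadratic normal form through the inversion $u_1=m_1-\frac{2u_1^2+u_2^2}{2-\De}$, $u_2=U^{-1}m_2$; without the specific cancellation the contraction argument does not close in $\E$. So the skeleton (reduce to Theorem~\ref{thm:main}, transform, contraction in Strichartz-type spaces, scattering from finiteness of the global norm) is right, but the two steps that constitute the actual content of the paper --- the choice of transform producing \eqref{eq:N31} and the direct estimation of the surviving quadratic terms via the angularly improved Strichartz estimates --- are respectively missing and replaced by an unjustified alternative.
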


\begin{rem}
We can obtain the decay for quadratic terms of $u_1$ (however not true for $u_2$):
\begin{align*}
\lim_{t\to \pm \infty} \norm{(2-\Delta)^{-1}u_1^2}_{H^1}=0,
\end{align*}
from which we can transfer the asymptotic behaviour of $T(u)$ to $u$ in the energy space $\E$ by inverting $T$. Indeed, we can prove 
\begin{align*}
\lim_{t\to \pm \infty}&(\norm{u_1-\re(e^{-itH}\phi_\pm)}_{\dot H^1}+\norm{u_2-U^{-1}\im(e^{-itH}\phi_\pm)}_{\dot H^1}\\
&\qquad+\norm{2u_1+|u|^2-2\re(e^{-itH}\phi_\pm)}_{L_x^2})=0.
\end{align*}
and hence $\lim_{t\to \pm \infty}d_\E(u,T^{-1}(e^{-itH}\phi_\pm))=0$.
\end{rem}

Now we discuss briefly the strategy of the proof of the theorem. 
There are three main ingredients.  

(1) The first one is to handle the singularity at zero frequency $\xi=0$ caused by $U^{-1}$. The equation \eqref{eq:GP3} does not have derivative as $U$ is $0$-order multiplier. However, $U^{-1}\approx D^{-1}$ at low frequency. To eliminate the singularity, we rely on some normal form type transform
\begin{align*}
z=u+B(u,u)
\end{align*}
where $B$ is a bilinear form. The normal form transform is of the same type, and is usually used to deal with the loss of derivative by exploiting the non-resonance of the nonlinearity and transfer the low-order terms to high-order terms, for example see \cite{GN,GNW}. In the previous works \cite{GNT1,GNT2,GNT3}, the following transform
\begin{align}\label{eq:ntran}
z=z_1+iz_2=u_1+\frac{u_1^2+u_2^2}{2-\Delta}+iUu_2
\end{align}
was used. Under this transform, the GP equation reduces to a better system without singularity at low frequency.  In this paper, we will derive a different type of transform to achieve some subtle cancellations, although our transformation results in a more complicated system with quartic and quintic terms. See Section 2. 

(2) The second is to handle the quadratic terms in 3D.  We know that 3D Quadratic nonlinear Schr\"odinger equation is mass-subcritical and the usual Strichartz analysis does not give scattering in the Sobolev space.  That is the main reason why the previous works require weighted Sobolev space.  In this paper we will handle the quadratic interactions in the Sobolev space with additional angular regularity. These ideas are motivated by the works for 3D Zakharov system \cite{GN, GLNW, Guo}. This approach allows us to prove scattering in the energy space in the radial case. 

(3) Another essential difficulty is the lack of $L^2$ control in the energy space. By the energy conservation we only have control on the $L^2$-norm of $\re u$ while not on $\im u$, although we can control $\norm{u}_2$ for any finite time by \eqref{eq:L2bd}.  To prove scattering for small radial data in the smaller space $H^1(\R^3)$, our proof can be simplified a lot and we can use the transform \eqref{eq:ntran}.  However, the energy space $\E$ is natural and more important as it opens the possibility to study the large data problem.  Working in the energy space, we have weak control on the low frequency component  of $\im u$. Fortunately enough, we can overcome this difficulty by choosing proper nonlinear transform which results in some subtle cancellations similar to null structure.

\subsection{Notations and definitions}

For $a,b\in \R$, $a\les b$ means there exists $C>0$ such that $a\leq Cb$, $a\ges b$ means $b\les a$, and $a\sim b$ means $a\les b\les a$. We denote $\jb{a}=(2+a^2)^{1/2}$. When $r\in [1,\infty]$, we denote by $r'$ the conjugate number, namely $\frac{1}{r'}+\frac{1}{r}=1$.

Let $\eta: \R\to [0, 1]$ be an even, non-negative,
smooth and radially decreasing function which is supported in $\{\xi:|\xi|\leq 8/5\}$ and
$\eta\equiv 1$ for $|\xi|\leq 5/4$.
For $k\in \Z$ we define
$\chi_k(\xi)=\eta(\xi/2^k)-\eta(\xi/2^{k-1})$ and $\chi_{\leq
k}(\xi)=\eta(\xi/2^k)$. Then we define the Littlewood-Paley projectors $P_k, P_{\leq k}$ on $L^2(\R^d)$ by
\[\widehat{P_ku}(\xi)=\chi_k(|\xi|)\widehat{u}(\xi),\quad \widehat{P_{\leq
k}u}(\xi)=\chi_{\leq k}(|\xi|)\widehat{u}(\xi),\] where we use $\ft(f)$ and $\widehat{f}$ to denote
the Fourier transform of $f$.

Let $\cir^{d-1}\subset \R^d$ be the unit sphere, endowed with the standard metric
$g$ and measure $d\sigma$. Let $\Des$ be the Laplace-Beltrami operator
on $\cir^{d-1}$. Define $\Lambda_\sigma=\sqrt{1-\Delta_\sigma}$ and $D_\sigma=\sqrt{-\Delta_\sigma}$. For $1\leq i,j\leq d$,
$X_{ij}=x_i\partial_j-x_j\partial_i$. It is well-known that for
$f\in C^2(\R^d)$
\[\Des(f)(x)=\sum_{1\leq i<j\leq d}X_{ij}^2(f)(x).\]
Denote $L_\sigma^p=L_\sigma^p(\cir^{d-1})=L^p(\cir^{d-1}:d\sigma)$ and 
$\Hl_p^s=\Hl_p^s(\cir^{d-1})=\Lambda_\sigma ^{-s}L_\sigma^p$.

$L^p(\R^d)$ denotes the usual Lebesgue space,
and $\Lr^p(\R^+)=L^p(\R^+:\ro^{d-1}d\ro)$.
$\Lr_\ro^pL_\sigma^q $ and $\Lr_\ro^p\Hl^s_q$ are
Banach spaces defined  by the  following norms
\[\norm{f}_{\Lr_{\ro}^pL_\sigma^q}=\big\|{\norm{f(\ro\sigma)}_{L_\sigma^q}}\big\|_{\Lr_{\ro}^p},\quad
\norm{f}_{\Lr_{\ro}^p\Hl^s_q}=\big\|{\norm{f(\ro\sigma)}_{\Hl^s_q}}\big\|_{\Lr_{\ro}^p}.\]
$H^s_p$, $\dot{H}_p^s$ ($B^s_{p,q}$,
$\dot{B}^s_{p,q}$) are the usual Sobolev (Besov) spaces on $\R^d$. We simply write $B^s_{p}=B^s_{p,2}$ and
$\dot{B}^s_{p}=\dot{B}^s_{p,2}$.
Let $X$ be a Banach space on $\R^d$, we define $XL_\sigma^q$ by the norm $\norm{f}_{XL_\sigma^q}=\normb{\norm{f(|x|\sigma)}_{L_\sigma^q}}_{X}$. Note that $L_x^pL_\sigma^q=\Lr_\ro^p L_\sigma^q$.  We also define $L_t^qX$
to be the space-time space on $\R\times \R^d$ with the norm
$\norm{u}_{L_t^qX}=\big\|\norm{u(t,\cdot)}_X\big\|_{L_t^q}$.

\section{Normal-form type transform}

In this section we derive a normal-form type transform. 
The GP equation \eqref{eq:GP2} for $u=u_1+iu_2$ can be rewritten as follows
\EQ{\label{eq:GP2.1}
 \dot u_1 &= -\De u_2 + 2(u_1+|u|^2/2)u_2,
 \\ -\dot u_2 &= (2-\De)u_1 + 3u_1^2 + u_2^2 + |u|^2u_1
 \\&=(2-\De)(u_1)+2u_1^2+u_2^2+(2u_1+|u|^2)^2/4-|u|^4/4.}
Note that $2u_1+|u|^2\in L^2_x$ is bounded by the conserved energy. 
In view of the equation of $u_2$, we first make the following change of variables 
\EQ{
 z_1 := u_1 + \frac{2u_1^2+u_2^2}{2-\De}, \quad z_2=u_2.}
Then the equations \eqref{eq:GP2.1} are simplified to 
\EQ{
  \dot u_1 &= -\De z_2+2z_1z_2+2Rz_2,
 \\ -\dot z_2 &= (2-\De)z_1 + z_1^2+2z_1R+R^2 - |u|^4/4,}
where 
\EQ{\label{eq:R}
 R\pt:=u_1+\frac{|u|^2}{2}-z_1=
 \frac{|u|^2}{2}-\frac{2u_1^2+u_2^2}{2-\De}
 =\frac{-\De u_2^2}{2(2-\De)}-\frac{(2+\De)u_1^2}{2(2-\De)}.}
By direct computations we have
\EQ{
 \pt u_1\dot u_1=-z_1\De u_2 + \frac{2u_1^2+u_2^2}{2-\De}\De u_2+2u_1(z_1+R)u_2,
 \pr u_2\dot u_2=-(2-\De)(u_2z_1)-z_1\De u_2-2\na z_1\cdot\na u_2 - u_2(z_1+R)^2+u_2|u|^4/4,}
We continue to compute
\EQ{
 \dot z_1 &=\dot u_1+2\frac{2u_1\dot u_1+u_2\dot u_2}{2-\De}
 \pr = -\De z_2 +\frac{2}{2-\De}[-3z_1\De z_2-2\na z_1\cdot\na z_2]
 \prq +2\BR{Rz_2+\frac{2}{2-\De}\Br{\frac{2u_1^2+u_2^2}{2-\De}\De z_2}}+\frac{8}{2-\De}(u_1z_1z_2)
 \prq +\frac{2}{2-\De}[4u_1Rz_2- u_2(z_1+R)^2+z_2|u|^4/4].}
Keeping it in mind that the low frequency component of $u_2$ are problematic as we have weak control, we 
see that the quadratic terms are good since it involves $\nabla u_2$.  The crucial novelty is that the first two cubic terms have cancellation in $u_2^3$ at low frequency: 
\EQ{\label{eq:N31}
N^1_3(u):\pt= 2\BR{Ru_2+\frac{2}{2-\De}\Br{\frac{2u_1^2+u_2^2}{2-\De}\De u_2}}
 \pr=\Br{\frac{-\De u_2^2}{2-\De}-\frac{2+\De}{2-\De}u_1^2}u_2+\frac{4}{2-\De}\Br{\frac{2u_1^2+u_2^2}{2-\De}\De u_2}
 \pr=\frac{-\De u_2^2}{2-\De}u_2+\frac{4}{2-\De}\Br{\frac{u_2^2}{2-\De}\De u_2}
 \prq+\frac{4}{2-\De}\BR{\frac{2u_1^2}{2-\De}\De u_2}-\BR{\frac{2+\De}{2-\De}u_1^2}u_2
 \pr=(2-\Delta)^{-1}\Br{-2u_2|\na u_2|^2+\frac{3\De u_2^2}{2-\De}\De u_2+\frac{2\na\De u_2^2}{2-\De}\na u_2}
 \prq+\frac{4}{2-\De}\BR{\frac{2u_1^2}{2-\De}\De u_2}-\BR{\frac{2+\De}{2-\De}u_1^2}u_2.
}
Therefore, the system reduces to
\EQ{
   \dot z_1 &=-\De z_2 +\frac{2}{2-\De}[-3z_1\De z_2-2\na z_1\cdot\na z_2]+N_3^1(u)
 \prq +\frac{2}{2-\De}[4u_1z_1z_2- z_1^2u_2+4u_1Rz_2-2u_2z_1R-u_2R^2+z_2|u|^4/4],
 \\ -\dot z_2 &= (2-\De)z_1 + z_1^2+2z_1R+R^2 - |u|^4/4.}
By the diagonalizing transform $m=m_1+im_2:=z_1+iUz_2$, namely
\begin{align}\label{eq:ntran2}
m_1+im_2=u_1 + \frac{2u_1^2+u_2^2}{2-\De}+iUu_2
\end{align}
we get
\EQ{\label{eq:GPm}
i\partial_t m-Hm&=N_2(m)+N_3(m,u)+N_4(m,u)+N_5(m,u),\\
u_1&=m_1-\frac{2u_1^2+u_2^2}{2-\De},\\
u_2&=U^{-1}m_2,
}
where
\EQ{
N_2(m,u)&=U(m_1^2)+\frac{2i}{2-\De}[-3m_1\De u_2-2\na m_1\cdot\na u_2],\\
N_3(m,u)&=U(2m_1R)+iN_3^1(u)+\frac{2i}{2-\De}[4u_1m_1u_2- m_1^2u_2],\\
N_4(m,u)&=U(R^2 - |u|^4/4)+\frac{2i}{2-\De}[4u_1Ru_2- 2u_2m_1R],\\
N_5(m,u)&=\frac{2i}{2-\De}[-u_2R^2+u_2|u|^4/4],
}
and here $R$ is given by \eqref{eq:R} and $N_3^1$ is given by \eqref{eq:N31}, $U$ and $H$ are defined by \eqref{eq:UH}. We will study the equations \eqref{eq:GPm}. 

\begin{rem}
We would like to compare the nonlinear transform \eqref{eq:ntran2} and \eqref{eq:ntran}. Under the transform \eqref{eq:ntran}, the following simplified equations are derived
\begin{align*}
iz_t-Hz=&-2i U(u_1^2)-4\jb{\nabla}^{-2}\nabla\cdot (u_1\nabla u_2)+[-iU(|u|^2u_1)+U^2(|u|^2u_2)].
\end{align*}
The quadratic terms are fine, however, the cubic terms $u_2^2 \Delta u_2$ or $u_2^2 u_1$ when $u_2$ has very low frequency are problematic. The null structure in \eqref{eq:N31} are essential to our analysis although the resulted systems \eqref{eq:GPm} are more complicated. 
\end{rem}

\begin{rem}
The main difficulty in the scattering problems is the zero frequency. 
In that respect, the worst terms in \eqref{eq:GPm} for the energy scattering on $\R^3$ are the following, respectively in $N_3^1(u)$ and in $N_5(u)$: 
\EQ{
 N_3^c(u):=-u_2\frac{2+\De}{2-\De}u_1^2, 
 \pq N_5^c(u):=-\frac{i}{2(2-\De)}[u_2|u|^4].}
More precisely, the parts of $O(u_2^5)$ are the most singular in low frequency among all the nonlinear terms. In $N_3^c(u)$, the quintic term is coming from the transform 
\EQ{
 u_1 = m_1 - \frac{2u_1^2+u_2^2}{2-\De}.}
Ignoring all the other terms after the substitution yields a system 
\EQ{
 \ddot u_2= (2-\De)\BR{\De u_2 + u_2\frac{2+\De}{2-\De}\Br{\frac{u_2^2}{2-\De}}^2} - \frac12u_2^5.}
Then the zero frequency limit $\x\to 0$, replacing $2-\De$ by $2$, leads to 
\EQ{
 \ddot u_2 - 2\De u_2 = 0.}
namely the free wave equation on $\R^3$. In other words, the equation \eqref{eq:GPm} for $m$ has another hidden cancellation in the level of quintic interactions. 
\end{rem}

\section{Generalized Strichartz estimates}

In this section we derive the spherically averaged Strichartz estimates for the propagator $e^{-itH}$ which are crucial in our proof of Theorem \ref{thm:main}. Our proof uses the methods in \cite{GLNW, Guo}. We will prove the following lemma.

\begin{lem}\label{lem:sphstri} 
Assume $k\in \Z$, $\phi\in L^2(\R^3)$ and $r\in (10/3,\infty]$. 
Then
\begin{align}
\norm{e^{-itH}P_{k}\phi}_{L_t^2L_x^rL_\sigma^2}\les& C_k(2,r)
\norm{\phi}_{L_x^2(\R^3)},
\end{align}
where 
\begin{align}
C_k(2,r)=
\begin{cases}
2^{k(\frac{1}{2}-\frac{3}{r})}, \qquad k\geq 0;\\
2^{k(2-\frac{7}{r})}, \qquad  k<0, \frac{10}{3}<r<4;\\
2^{k(1-\frac{3}{r})}, \qquad k<0,r>4;\\
\jb{k}2^{\frac{k}{4}}, \quad k<0, r=4.
\end{cases}
\end{align}
\end{lem}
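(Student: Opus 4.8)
The plan is to follow the spherical-harmonics-plus-Bessel scheme of \cite{GLNW,Guo}, reducing everything to one-dimensional oscillatory integrals with a uniform control in the spherical-harmonic index. First I would decompose $\phi=\sum_{n\ge0}\phi_n$, $\phi_n:=\Pi_n\phi$ the projection onto spherical harmonics of degree $n$; since $e^{-itH}$ and $P_k$ are radial Fourier multipliers they commute with each $\Pi_n$, so $e^{-itH}P_k\phi_n$ stays a degree-$n$ harmonic times a radial profile. Orthogonality on $\cir^2$, Minkowski's inequality (legitimate since $r>10/3>2$, so that $\ell^2_n$ may be pulled outside $\Lr^r_\rho$) and Fubini in $t$ then reduce the claim to
\[
\norm{e^{-itH}P_k\phi_n}_{L_t^2L_x^rL_\sigma^2}\les C_k(2,r)\norm{\phi_n}_{L^2}\qquad\text{uniformly in }n\ge 0 .
\]
For $\phi_n=g(\rho)Y_n(\omega)$ with $Y_n$ normalized, the Fourier--Bessel formula gives
\[
e^{-itH}P_k\phi_n(\rho\omega)=c\,Y_n(\omega)\int_0^\infty e^{-ith(s)}\chi_k(s)\,\widehat g(s)\,\frac{J_\nu(\rho s)}{(\rho s)^{1/2}}\,s^2\,ds,\qquad \nu:=n+\tfrac12 ,
\]
where $h(s)=s\sqrt{2+s^2}$ is the radial symbol of $H$ and $g\mapsto\widehat g$ is the order-$\nu$ Hankel transform, an isometry on $\Lr^2(\R^+)$. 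So it suffices to bound, uniformly in $\nu\ge\tfrac12$, the operator $T_{k,\nu}\colon G\mapsto\int_0^\infty e^{-ith(s)}\chi_k(s)G(s)\frac{J_\nu(\rho s)}{(\rho s)^{1/2}}s^2\,ds$ from $\Lr^2(\R^+)$ into $L_t^2\Lr^r_\rho$, with constant $C_k(2,r)$.

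I would then run a $TT^*$ argument in the time variable: together with $\norm{e^{-itH}P_k}_{L^2\to L^2}\les1$, the bound for $T_{k,\nu}$ follows from the frequency-localized dispersive/kernel estimate for $T_{k,\nu}T_{k,\nu}^*$, whose Schwartz kernel is
\[
K_{k,\nu}(\tau;\rho,\rho')=(\rho\rho')^{-1/2}\int_0^\infty e^{-i\tau h(s)}\,\chi_k(s)^2\,J_\nu(\rho s)\,J_\nu(\rho' s)\,s\,ds ,
\]
followed by the Strichartz machinery (Keel--Tao type, adapted to the angularly-averaged norms, which also accommodates the borderline $L^2_t$ scale once everything is localized to $s\sim2^k$). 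The technical core is a pointwise bound on $K_{k,\nu}$ that is \emph{uniform in} $\nu$: one obtains it from stationary/non-stationary phase in $s$ — the phase never degenerates on $\supp\chi_k$ since $h''(s)=2s(s^2+3)(2+s^2)^{-3/2}>0$ — combined with the standard Bessel asymptotics $|J_\nu(r)|\les\min\{r^{-1/2},(r/\nu)^\nu\}$ (for $r\gtrsim\nu$, resp.\ $r\lesssim\nu$) and the transitional Airy bound $|J_\nu(r)|\les(|r-\nu|+\nu^{1/3})^{-1/4}r^{-1/4}$, all with $\nu$-independent constants; feeding this into Schur-type estimates in $\rho$ (and interpolation) produces the angularly-averaged dispersive estimate with the right decay in $\tau$ and powers of $2^k$.

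The exponents $C_k(2,r)$ are read off by scaling. For $k\ge0$, the substitution $s\mapsto2^ks$, $(t,\rho)\mapsto(2^{-2k}t,2^{-k}\rho)$ reduces to $k=0$, where $h$ is a smooth elliptic radial symbol with non-degenerate Hessian and everything is of Schr\"odinger type; scaling yields $C_k(2,r)=2^{k(1/2-3/r)}$ for all $r\in(10/3,\infty]$, and $r>10/3$ is essentially optimal, $r=10/3$ being the $L^q_{t,x}$ Strichartz (Stein--Tomas) exponent of the $3$D Schr\"odinger paraboloid in $\R^{1+3}$. For $k<0$ one uses instead the wave rescaling $s\mapsto2^ks$, $(t,\rho)\mapsto(2^{-k}t,2^{-k}\rho)$, under which (because $h(s)=\sqrt{2}\,s+O(s^3)$) the rescaled symbol converges, as $k\to-\infty$, uniformly with all derivatives on $s\sim1$ to that of the unit-frequency wave propagator $e^{-i\sqrt{2}\,t|\na|}$; this gives $2^{k(1-3/r)}$ for $r>4$, $r=4$ being the corresponding exponent for the $3$D wave cone (hence the logarithmic loss $\jb{k}2^{k/4}$ there). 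For $10/3<r<4$, i.e. between the paraboloid and cone exponents, the wave-scaling bound fails and one only gets the weaker $2^{k(2-7/r)}$, obtained by interpolating the clean $r=\infty$ bound against a weak estimate just above $r=10/3$, the latter contributing the extra loss.

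\textbf{Main obstacle.} The delicate step is the uniform-in-$\nu$ oscillatory-integral estimate for $K_{k,\nu}$, especially in the low-frequency regime $k<0$ with $r$ near $10/3$: one must track carefully the Bessel transition region $s\rho\sim\nu$ and its collision with the stationary point of $h$, and keep every constant independent of $\nu$ (and of $k$ within the relevant scaling). This is exactly where the shape of $C_k(2,r)$ — the endpoint $r=10/3$ and the breakpoint $r=4$ with its logarithmic loss — is decided.
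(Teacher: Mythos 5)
Your overall architecture --- spherical harmonics decomposition, reduction to one-dimensional Hankel/Bessel operators uniform in the harmonic degree $\nu$, $TT^*$ plus stationary phase on the frequency-localized kernel, and the uniform Bessel decay/oscillation bounds including the transition region $s\rho\sim\nu$ --- is exactly the route the paper takes (it passes through a general Theorem for symbols satisfying hypotheses $\mathrm{H1(k)}$--$\mathrm{H3(k)}$, with the main term/error term split of $J_\nu$ via Schl\"afli's representation and a separate Taylor-expansion treatment of the region $|x|\le 2^{-k}$). You also correctly flag the uniform-in-$\nu$ transition-region analysis as the hard technical core.

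However, your derivation of the constants $C_k(2,r)$ in the regime $k<0$, $10/3<r<4$ has a genuine gap. First, the propagator is not scale-invariant --- the paper explicitly identifies this as the main new difficulty --- so the exponents cannot simply be ``read off by scaling''; one must track the frequency-localized behaviour of $\om'(\rho)\sim\jb{\rho}$ and $\om''(\rho)\sim\rho\jb{\rho}^{-1}$ (i.e.\ $\al=1$, $\be=3$ for $k<0$) directly. Second, your own rescaling argument undercuts you in this range: if the low-frequency rescaled symbol converges to that of $e^{-i\sqrt2 t|\na|}$, then the limit gives nothing for $r<4$, since the wave estimate $L^2_tL^r_xL^2_\sigma$ fails there. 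The bound $2^{k(2-7/r)}$ comes precisely from the \emph{residual radial curvature} $\om''(2^k\rho)\sim 2^{k}$ (the deviation of $H$ from the exact wave symbol), fed into a Van der Corput estimate on the kernel and a dyadic decomposition in $|x|$, with the exponent produced by the crossover between the decay-only bound and the oscillation bound when summing over dyadic spatial scales $2^{j}$. It cannot be obtained by interpolating a clean $r=\infty$ bound against an estimate just above $r=10/3$: the exponent $2-7/r$ is affine in $1/r$ with intercept $2$ at $r=\infty$, whereas the clean $r=\infty$ bound is $2^{k\cdot 1}$, so any interpolation anchored there produces a line with intercept $1$, not $2$. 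You would need to replace this step by the quantitative curvature argument (the paper's hypothesis $\mathrm{H2(k)}$ with $\be>\al$ and the resulting exponent $\theta_k(2,r)$), which is also where the breakpoint $r=4$ and the logarithmic loss $\jb{k}2^{k/4}$ actually come from.
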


\begin{cor}\label{cor:sphstrqr}
Assume $k\in \Z$, $\phi\in L^2(\R^3)$ and $2\leq q , r\leq \infty$ 
Then
\begin{align}
\norm{e^{-itH}P_{k}\phi}_{L_t^qL_x^rL_\sigma^2}\les& C_k(q,r)
\norm{\phi}_{L_x^2(\R^3)},
\end{align}
where 
\begin{align}\label{eq:Ckqr}
C_k(q,r)=
\begin{cases}
2^{k(\frac{3}{2}-\frac{3}{r}-\frac{2}{q})}, \qquad k\geq 0, \frac{2}{5}<q(\frac{1}{2}-\frac{1}{r})\leq 1;\\
2^{k(\frac{3}{2}-\frac{3}{r}-\frac{1}{q})}, \qquad k<0,\frac{1}{2}<q(\frac{1}{2}-\frac{1}{r})\leq 1;\\
2^{k(\frac{7}{2}-\frac{7}{r}-\frac{3}{q})}, \qquad  k< 0, \frac{2}{5}<q(\frac{1}{2}-\frac{1}{r})<\frac{1}{2};\\
\jb{k}^{2/q}2^{\frac{k}{4}}, \qquad k<0, q(\frac{1}{2}-\frac{1}{r})=\frac{1}{2}.
\end{cases}
\end{align}
\end{cor}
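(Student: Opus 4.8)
The plan is to deduce Corollary \ref{cor:sphstrqr} from Lemma \ref{lem:sphstri} purely by interpolation. First I would isolate the endpoint coming from conservation of mass: since $e^{-itH}$ is unitary on $L^2(\R^3)$ and $L^2_xL^2_\sigma=L^2_x$, one has $\|e^{-itH}P_k\phi\|_{L^\infty_tL^2_xL^2_\sigma}\le\|\phi\|_{L^2}$, and combining this with Bernstein's inequality $\|P_kf\|_{L^r_x}\les 2^{3k(\frac12-\frac1r)}\|P_kf\|_{L^2_x}$ (valid for every $k\in\Z$ and $2\le r\le\infty$) and the elementary bound $\|g\|_{L^r_xL^2_\sigma}\les\|g\|_{L^r(\R^3)}$ for $r\ge2$ (H\"older on the sphere, which has finite measure) gives
\begin{align}
\|e^{-itH}P_k\phi\|_{L^\infty_tL^r_xL^2_\sigma}\les 2^{3k(\frac12-\frac1r)}\|\phi\|_{L^2},\qquad 2\le r\le\infty .
\end{align}
This is precisely the $q=\infty$ instance of \eqref{eq:Ckqr}.

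Next I would interpolate in the time variable between this bound and Lemma \ref{lem:sphstri}. For a pair $(q,r)$ with $2\le q<\infty$ falling in one of the ranges of \eqref{eq:Ckqr}, set $\theta=2/q$ and let $r_0$ be determined by $\frac1{r_0}=\frac q{2r}-\frac q4+\frac12$; then $(1/q,1/r)$ lies on the segment joining $(0,1/2)$ to $(1/2,1/r_0)$, and the quantity $q(\frac12-\frac1r)=2(\frac12-\frac1{r_0})$ is constant along this segment (this is exactly why the regimes in \eqref{eq:Ckqr} are phrased through $q(\frac12-\frac1r)$). Interpolating the operator $\phi\mapsto e^{-itH}P_k\phi$ in $L^q_t$ between the $L^\infty_t$ estimate above (used at the endpoint $r=2$, constant $1$) and the $L^2_t$ estimate of Lemma \ref{lem:sphstri} at the exponent $r_0$ (constant $C_k(2,r_0)$) yields
\begin{align}
\|e^{-itH}P_k\phi\|_{L^q_tL^r_xL^2_\sigma}\les C_k(2,r_0)^{2/q}\|\phi\|_{L^2}.
\end{align}
It then remains to check, regime by regime, that $C_k(2,r_0)^{2/q}$ equals the constant displayed in \eqref{eq:Ckqr}: for $k\ge0$ one uses $C_k(2,r_0)=2^{k(\frac12-\frac3{r_0})}$ and the constraint $r_0>10/3$ translates into $q(\frac12-\frac1r)\in(\frac25,1]$; for $k<0$ the three sub-cases $10/3<r_0<4$, $r_0>4$, $r_0=4$ of Lemma \ref{lem:sphstri} are carried through separately and, after eliminating $r_0$ via the relation above, produce the exponents $\frac72-\frac7r-\frac3q$, $\frac32-\frac3r-\frac1q$ and the logarithmic endpoint, with the parameter windows matching automatically.

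The one delicate point is the endpoint line $q(\frac12-\frac1r)=\frac12$, i.e.\ $r_0=4$, where Lemma \ref{lem:sphstri} already carries the loss $\jb{k}$; there I would argue by real interpolation in a neighbourhood of this endpoint, so that the logarithmic factor passes through with the stated power $\jb{k}^{2/q}$. The main obstacle, however, is organizational rather than conceptual: unlike for the pure wave or Schr\"odinger propagators, the $k$-dependence of $C_k(2,r_0)$ for $k<0$ does not obey a single scaling law (it changes form at $r_0=4$), so the three regimes must be kept separate, one must verify that the interpolation segments emanating from $(0,1/2)$ stay inside the windows $r_0\in(10/3,4)$, $\{4\}$, $(4,\infty]$ respectively, and one must check the elementary algebra relating $r_0$ to $(q,r)$ in each case. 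Everything else is routine mixed-norm interpolation, using that $L^2_\sigma$ is a fixed Banach space in the angular variable so that the interpolation acts only on the $(t,\rho)$ variables.
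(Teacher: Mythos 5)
Your argument is exactly the paper's: the corollary is obtained by interpolating Lemma \ref{lem:sphstri} against the trivial mass-conservation bound $\|e^{-itH}P_k\phi\|_{L^\infty_tL^2_xL^2_\sigma}\lesssim\|\phi\|_{L^2}$, and your bookkeeping with $r_0$ and the constancy of $q(\tfrac12-\tfrac1r)$ along the interpolation segment reproduces the exponents in each regime. One small remark: on the endpoint line $q(\tfrac12-\tfrac1r)=\tfrac12$ the interpolation yields $\jb{k}^{2/q}2^{k/(2q)}$ rather than the printed $\jb{k}^{2/q}2^{k/4}$, which is consistent with the general formula $\theta_k(q,r)$ of Corollary \ref{Ltq} and indicates a typo in \eqref{eq:Ckqr} rather than a gap in your proof.
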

\begin{proof}
This corollary follows immediately from interpolation between Lemma \ref{lem:sphstri} and the trivial estimate
$\norm{e^{-itH}P_{k}\phi}_{L_t^\infty L_x^2L_\sigma^2}\les
\norm{\phi}_{L_x^2(\R^3)}$.
\end{proof}

Instead of just proving Lemma \ref{lem:sphstri}, we will derive the generalized Strichartz estimates for a class of dispersive equations as we think they may be useful in the other occasions. Consider a class of dispersive
equations:
\begin{equation}\label{eq:wD}
i\partial_tu=-\om(D)u+f,\quad u(0)=u_0(x),
\end{equation}
where $D=\sqrt{-\Delta}$, $\om: \mathbb{R}^+ \to \R$ is $C^3$ smooth,
$f(x,t),\ u(x,t):\R^d\times \R \to \C,\ d\geq 2$, and
$\om(D)u={\F}^{-1}\om(|\xi|)\F u$.
Equation \eqref{eq:wD} contains many dispersive equations and in particular the one  $\om(D)=H$ considered in this paper. In \cite{GPW}, a systematic study of the dispersive estimates for the propagator $e^{it\om(D)}$ was carried out under some conditions on the asymptotic behavior of $\om$ around $0$ and $\infty$.  
We will assume similar conditions in this paper.  For $k\in \Z$, let $I_k=(2^{k-1},2^{k+1})$ and we say the following conditions:

\begin{enumerate}[{H}1(k):]
\item There exists $\al\in \R$ such that
\begin{align}
|\om'(r)|\ges 2^{k(\al-1)} \quad \mbox{ for   } r\in I_k.
\end{align}

\item H1(k) holds and there exists  $\be$, with $\be\leq \al$ for $k\geq 0$ and $\be\geq \al$ for $k<0$, such that
\begin{align}
|\om''(r)|\ges 2^{k(\be-2)} \quad \mbox{   for   } r\in I_k.
\end{align}
Moreover, $\frac{|\om''(r)|}{|\om'(r)|}\les 2^{-k}$ for $r\in I_k$, and $\omega'''$ changes its sign finite times in $I_k$.

\item The following inequality holds
\begin{align}
\om'(r)\om''(r)>0, \quad r\in I_k.
\end{align}
\end{enumerate}
All the implicit constants in the above conditions are independent of $k$. Our results depend only on these constants but not the specific form of $\omega$. These conditions are easy to verify, showing the dispersive effect of $\om$ at the frequency of scale $\sim 2^k$. We note that if $\om$ satisfies H2(k), then
\begin{align}\label{eq:sgnma}
k(\al-\be)\geq 0.
\end{align}
Also note that if in H1(k) and H2(k) the lower bounds are replaced by $|\om'(r)|\sim r^{\al-1}$ and $|\om''(r)|\sim r^{\be-2}$, then we must have $\al\geq \be$ for $k\geq 0$ and $\al\leq \be$ for $k<0$, and hence \eqref{eq:sgnma} holds.

The operator $H$ corresponds to $\om(r)=r\sqrt{2+r^2}$.   
Simple computations show that 
\begin{align}\label{eq:Halbe}
\begin{split}
\om'(r)=&\frac{2+2r^2}{\sqrt{2+r^2}}\sim \jb{r},\\
\om''(r)=&\frac{6r+2r^3}{(2+r^2)^{3/2}}\sim r \jb{r}^{-1},\\
\om'''(r)=&\frac{12}{(2+r^2)^{5/2}}\sim \jb{r}^{-5}.
\end{split}
\end{align}
Thus, we see $\om$ satisfies H2(k) and H3(k) with $\al=\be=2$ for $k\geq 0$, $\al=1, \be=3$ for $k<0$.
We list more examples for the potential applications in other occasions.

\begin{exam*}
\quad

\begin{enumerate}

\item Schr\"odinger type: $\om=r^a$, $a>0$.

\noindent If $a>1$, $\om$ satisfies H2(k) and H3(k) with $\al=\be=a$ for $k\in \Z$. If $a=1$, $\om$ satisfies H1(k) with $\al=\be=1$ for $k\in \Z$. If $0<a<1$, $\om$ satisfies H2(k) with $\al=\be=a$ for $k\in \Z$.

\item Klein-Gordon: $\om=(1+r^2)^{1/2}$.
\begin{align*}
\om'(r)=&r(1+r^2)^{-1/2},\\
\om''(r)=&(1+r^2)^{-3/2}.
\end{align*}
Then $\om$ satisfies H2(k) and H3(k) with $\al=1, \be=-1$ for $k\geq 0$, $\al=\be=2$ for $k<0$.

\item Beam equation: $\om=(1+r^4)^{1/2}$.
\begin{align*}
\om'(r)=&2r^3(1+r^4)^{-1/2},\\
\om''(r)=&(6r^2+2r^6)(1+r^4)^{-3/2}.
\end{align*}
Then $\om$ satisfies H2(k) and H3(k) with $\al=\be=2$ for $k\geq 0$, $\al=\be=4$ for $k<0$.

\item Fourth-order Schr\"odinger: $\om=r^2+\e r^4$.
\begin{align*}
\om'(r)=&2r+4\e r^3,\\
\om''(r)=&2+12\e r^2.
\end{align*}
Then $\om$ satisfies H2(k) and H3(k) with $\al=\be=2$ for $k\in \Z$ uniformly with respect to $\e\geq 0$.
\end{enumerate}
\end{exam*}

The main result of this section is the following theorem which implies Lemma \ref{lem:sphstri} by taking $\alpha=\beta=2$ for $k\geq 0$, and $\alpha=1,\beta=3$ for $k<0$.

\begin{thm}\label{thm} 
(1) Assume $k\in \Z$, $\om$ satisfy $\mathrm{H1(k)}$, $d\geq 3$ and $\frac{2d-2}{d-2}<r\leq\infty$. Then we have: for any $\phi \in L_x^2(\R^d)$
\begin{align}
\norm{e^{it\om(D)}P_k\phi}_{L_t^2L_x^rL_\sigma^2}\les& 2^{k(\frac{d}{2}-\frac{d}{r})}2^{-k\al/2}
\norm{\phi}_{L_x^2(\R^d)},
\end{align}
and for $d=2$, {$2\leq q\leq r\leq \infty$} and $1/q<1/2-1/r$ , we have: for any $\phi \in L_x^2(\R^2)$
\begin{align}
\norm{e^{it\om(D)}P_k\phi}_{L_t^qL_x^rL_\sigma^2}\les& 2^{k(\frac{d}{2}-\frac{d}{r})}2^{-k\al/q}
\norm{\phi}_{L_x^2(\R^2)}.
\end{align}

(2) Assume $k\in \Z$, $\om$ satisfy $\mathrm{H2(k)}$, $d\geq 2$ and $\frac{4d-2}{2d-3}<r\leq \frac{2d-2}{d-2}$. Then we have: for any $\phi\in L^2(\R^d)$, radial 
\begin{align}\label{eq:thm2(2)}
\norm{e^{it\om(D)}P_k\phi}_{L_t^2L_x^rL_\sigma^2}\les& \norm{\phi}_{L_x^2}\times
\begin{cases}
2^{k\theta_k(2,r)}, \qquad\qquad \frac{4d-2}{2d-3}<r< \frac{2d-2}{d-2};\\
\jb{k(\al-\beta)}2^{k\theta_k(2,r)}, \quad r=\frac{2d-2}{d-2};
\end{cases}
\end{align}
where
\begin{align}
\theta_k(2,r)=\frac{d}{2}-\frac{d}{r}-\frac{\beta}{2}-(\al-\beta)\brk{\frac{d-1}{2}-\frac{d-1}{r}}.
\end{align}
Moreover, if $\om$ also satisfies an additional condition $\mathrm{H3(k)}$, then \eqref{eq:thm2(2)} holds in the non-radial case, namely for all $\phi \in L^2(\R^d)$.
\end{thm}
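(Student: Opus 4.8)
The plan is to run a $TT^*$ argument sector by sector in the spherical-harmonic decomposition, reducing everything to pointwise bounds for a family of Fourier--Bessel kernels that are then estimated by stationary phase using only the hypotheses $\mathrm{H1(k)}$--$\mathrm{H3(k)}$, in the spirit of \cite{GLNW,Guo,GPW}.

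First I would note that, since we are after the homogeneous estimate $\|e^{it\om(D)}P_k\phi\|_{L^q_tL^r_xL^2_\sigma}\les C_k\|\phi\|_2$ (and not a retarded inhomogeneous one, so no Christ--Kiselev issue arises even when $q=2$), it is equivalent by duality to the boundedness of the self-adjoint operator $S:=\int_\R e^{i(t-s)\om(D)}\wt P_k^2(\cdot)\,ds$ from $X':=L^{q'}_tL^{r'}_xL^2_\sigma$ to $X:=L^{q}_tL^{r}_xL^2_\sigma$, with $\|S\|_{X'\to X}\les C_k^2$, where $\wt P_k$ is a slightly fattened projector. Since $S$ commutes with the $SO(d)$-action, it preserves each spherical-harmonic subspace $\mathcal H_n$, so $S=\bigoplus_n S_n$. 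Because $r'\le 2$, the decomposition map $f\mapsto(f_n)_n$ satisfies the reverse-Minkowski bound $\|f\|_{L^{r'}_xL^2_\sigma}\ge(\sum_n\|f_n\|_{L^{r'}_xL^2_\sigma}^2)^{1/2}$ (the $\ell^{r'/2}\hookrightarrow\ell^1$ embedding applied radius by radius, using orthogonality of spherical harmonics in $L^2_\sigma$), so it suffices to control $\|S_n\|_{X'\to X}$ with a constant that, after Cauchy--Schwarz, is summable against $\sum_n\|f_n\|^2$ and $\sum_n\|g_n\|^2$. For part (1), where $r$ lies above the ``Strichartz line'' $\tfrac{2d-2}{d-2}$, a bound uniform in $n$ suffices and the ordinary dispersive heuristics apply; for part (2) one must extract the genuine gain in $n$ coming from spherical averaging, and it is precisely the summability of this gain that forces $r>\tfrac{4d-2}{2d-3}$ and produces the logarithmic loss $\jb{k(\al-\be)}$ at $r=\tfrac{2d-2}{d-2}$.

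On $\mathcal H_n$ the operator $S_n$ acts on radial profiles by the kernel
\[
S_n(t-s,\rh,\rh')=\int_0^\infty e^{i(t-s)\om(\la)}\chi_k(\la)^2\,\wt J_\nu(\rh\la)\,\wt J_\nu(\rh'\la)\,\la^{d-1}\,d\la,\qquad \nu=n+\tfrac{d-2}{2},
\]
with $\wt J_\nu(x)=x^{-(d-2)/2}J_\nu(x)$. I would then dyadically localize $|t-s|\sim 2^j$ and the radii $\rh\sim 2^a$, $\rh'\sim 2^b$, insert the two-regime description of the Bessel function (the non-oscillatory zone $\rh\la\les\nu$ where $\wt J_\nu(\rh\la)\sim(\rh\la)^{\nu-(d-2)/2}$, and the oscillatory zone $\rh\la\ges\nu$ where $\wt J_\nu(\rh\la)\approx(\rh\la)^{-(d-1)/2}\sum_\pm e^{\pm i\rh\la}$ up to standard corrections), and apply van der Corput to the $\la$-integral. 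Here $\mathrm{H1(k)}$ governs the single-stationary-phase situation on the shell $|\la|\sim 2^k$, giving the factor $2^{-k\al/2}$ (resp.\ $2^{-k\al/q}$) and, with the shell volume, the $2^{k(d/2-d/r)}$ of part (1); $\mathrm{H2(k)}$ supplies the second-derivative lower bound $2^{k(\be-2)}$ needed when the stationary point of $(t-s)\om(\la)\pm\rh\la\pm\rh'\la$ is genuine, and the ratio bound $|\om''|/|\om'|\les 2^{-k}$ prevents the two regimes from interfering; and $\mathrm{H3(k)}$, through the finitely many sign changes of $\om'''$, upgrades the van der Corput estimate uniformly in $\nu$, which is what allows summation over the non-radial sectors. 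Collecting these pointwise kernel bounds, I would close the $X'\to X$ bound for $S_n$ by Schur's test/Young in the radial variables and by Hardy--Littlewood--Sobolev in time (or, when $d\ge3$ and $q=2$, by a direct dyadic-in-$j$ summation of the fixed-time decay), and finally sum in $n$ over the admissible range.

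The main obstacle is part (2): obtaining the sharp exponent $\theta_k(2,r)=\tfrac d2-\tfrac dr-\tfrac\be2-(\al-\be)(\tfrac{d-1}{2}-\tfrac{d-1}{r})$ requires tracking the $\nu$-dependence through the transition zone $\rh\la\sim\nu$ of the Bessel asymptotics and carefully balancing the low (wave-like) part of the frequency shell, governed by $\al$, against the stationary-phase part, governed by $\be$; the appearance of $\al-\be$ in the exponent and the borderline divergence of the $n$-sum at $r=\tfrac{2d-2}{d-2}$ responsible for the $\jb{k(\al-\be)}$ loss both emerge from this balancing. The non-radial improvement under $\mathrm{H3(k)}$ is the other delicate point, since one must run a refined van der Corput estimate that is uniform in the Bessel order $\nu$ using only a bound on the number of sign changes of $\om'''$ rather than any pointwise lower bound.
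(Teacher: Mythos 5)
Your overall architecture (spherical-harmonic decomposition, Minkowski to reduce to a uniform-in-$\nu$ one-dimensional Fourier--Bessel estimate, two-regime Bessel asymptotics, $TT^*$ plus van der Corput) is the same as the paper's, and your treatment of part (1) and of the reduction steps is sound. However, there are two concrete errors in how you assign the hypotheses and the constraints, and they sit exactly where the real work is. First, you have swapped the roles of the hypotheses: the ``finitely many sign changes of $\om'''$'' clause belongs to $\mathrm{H2(k)}$ and is only used to control the total variation of amplitudes in van der Corput; $\mathrm{H3(k)}$ is the sign condition $\om'\om''>0$, and its actual function in the non-radial case is to ensure that, for the combined phase $\phi_2=t\om(2^k\rho)-\theta(r\rho)+\theta(r'\rho)$ in the oscillatory Bessel regime (with $\theta(r)=(r^2-\nu^2)^{1/2}-\nu\arccos(\nu/r)-\pi/4$), the first and second $\rho$-derivatives cannot both be small: when $\p_\rho\phi_2$ nearly vanishes, the two contributions to $\p_\rho^2\phi_2$ (one carrying $\om''$, one carrying $\om'$ through the near-cancellation in $\p_\rho\phi_2$) have the \emph{same} sign precisely because $\om'\om''>0$, yielding $|\p_\rho^2\phi_2|\ges 2^{k\al}|t|R2^{-l}$. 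Your plan contains no substitute for this mechanism, and without it the stationary-phase estimate in the transition and oscillatory zones does not close uniformly in $\nu$.

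Second, you attribute the threshold $r>\frac{4d-2}{2d-3}$ and the endpoint loss $\jb{k(\al-\be)}$ to summability over the spherical-harmonic index $n$. That is not where they come from: after Minkowski/Cauchy--Schwarz one only needs a bound \emph{uniform} in $\nu$, and both the threshold and the logarithm arise from the dyadic sum over the radial scale $j$ against the weight $s^{\frac{d-1}{r}-\frac{d-2}{2}}$, i.e.\ from $\sum_{j\ge -k}2^{j(\frac{d-1}{r}-\frac{d-2}{2})}\min\bigl([2^{-(j+k)/4}2^{k(\al-\be)/4}]^{1-2/r},1\bigr)$: convergence of the large-$j$ tail forces $r>\frac{4d-2}{2d-3}$, and at $r=\frac{2d-2}{d-2}$ the summand is constant over $\sim|k(\al-\be)|$ values of $j$, producing the logarithm. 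Finally, the hardest step of the non-radial case --- splitting around the turning point $s\rho\sim\nu$ at an intermediate scale $\lambda$, using Schl\"afli's representation below it and the refined uniform asymptotics above it, and in particular handling the non-differentiable error term $h(\nu,r)$ in those asymptotics (the paper writes its $TT^*$ kernel as $K_E=K_T-K_M$ and estimates $K_T$ directly via $L^2$ bounds on $J_\nu$, $J_\nu''$) --- is left at the level of ``track the $\nu$-dependence through the transition zone,'' so part (2) in the non-radial case remains an outline rather than a proof.
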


By interpolation between the trivial estimate $\norm{e^{it\om(D)}P_{k}\phi}_{L_t^\infty L_x^2L_\sigma^2}\les
\norm{\phi}_{L_x^2(\R^3)}$ and Theorem \ref{thm} 
we get

\begin{cor}\label{Ltq} 
(1) Assume $k\in \Z$, $\om$ satisfy $\mathrm{H1(k)}$, $d\geq 2$, $2\leq q, r \leq \infty$ and $q(\frac{1}{2}-\frac{1}{r})>\frac{1}{d-1}$. Then we have: for any $\phi \in L_x^2(\R^d)$
\begin{align}
\norm{e^{it\om(D)}P_k\phi}_{L_t^qL_x^rL_\sigma^2}\les& 2^{k(\frac{d}{2}-\frac{d}{r})}2^{-k\al/q}
\norm{\phi}_{L_x^2(\R^d)},
\end{align}

(2) Assume $k\in \Z$, $\om$ satisfy $\mathrm{H2(k)}$, $d\geq 2$, $2\leq q, r \leq \infty$ and $\frac{2}{2d-1}<q(\frac{1}{2}-\frac{1}{r})\leq \frac{1}{d-1}$. Then we have: for any $\phi\in L^2(\R^d)$, radial 
\begin{align}\label{eq:cor2(2)}
\norm{e^{it\om(D)}P_k\phi}_{L_t^qL_x^rL_\sigma^2}\les& \norm{\phi}_{L_x^2}\times
\begin{cases}
2^{k\theta_k(q,r)}, \qquad\qquad \frac{2}{2d-1}<q(\frac{1}{2}-\frac{1}{r})<\frac{1}{d-1};\\
\jb{k(\alpha-\beta)}^{\frac{2}{q}}2^{k\theta_k(q,r)}, \quad q(\frac{1}{2}-\frac{1}{r})=\frac{1}{d-1};
\end{cases}
\end{align}
where
\begin{align}
\theta_k(q,r)=\frac{d}{2}-\frac{d}{r}-\frac{\beta}{q}-(\al-\beta)\brk{\frac{d-1}{2}-\frac{d-1}{r}}.
\end{align}
Moreover, if $\om$ also satisfies an additional condition $\mathrm{H3(k)}$, then \eqref{eq:cor2(2)} holds in the non-radial case, namely for all $\phi \in L^2(\R^d)$.
\end{cor}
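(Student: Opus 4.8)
The plan is to obtain Corollary \ref{Ltq} from Theorem \ref{thm} purely by interpolation, as the surrounding text indicates; there is no further analytic input. The two endpoints of the interpolation are the trivial energy bound $\norm{e^{it\om(D)}P_k\phi}_{L_t^\infty L_x^2L_\sigma^2}\les\norm{\phi}_{L_x^2}$, which follows from Plancherel and the orthonormality of spherical harmonics in $L_\sigma^2$ (and holds in particular for radial $\phi$), and the relevant case of Theorem \ref{thm} evaluated at some auxiliary exponent $r_0$. First I would record that the mixed-norm spaces form a complex-interpolation scale, $[L_t^{q_0}L_x^{r_0}L_\sigma^2,\,L_t^{q_1}L_x^{r_1}L_\sigma^2]_\theta=L_t^{q}L_x^{r}L_\sigma^2$ with the usual reciprocal-exponent relations and the inner exponent $2$ held fixed, so that bilinear complex interpolation applies to the single bounded linear operator $\phi\mapsto e^{it\om(D)}P_k\phi$ on $L_x^2$. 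This interpolation also respects an a priori restriction to radial data, since $e^{it\om(D)}$ preserves radiality and the $L_\sigma^2$ factor is inert on radial functions; that observation delivers the radial statements, and the unrestricted statements then follow verbatim once $\mathrm{H3}(k)$ is assumed, because Theorem \ref{thm}(2) is then valid for all $\phi\in L^2$.

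For part (1), given $q,r$ with $q(\tfrac12-\tfrac1r)>\tfrac1{d-1}$ I set $\theta=2/q\in(0,1]$ and pick $r_0$ by $\tfrac12-\tfrac1{r_0}=\tfrac q2(\tfrac12-\tfrac1r)$; the hypothesis is then exactly $\tfrac12-\tfrac1{r_0}>\tfrac1{2(d-1)}$, i.e.\ $r_0>\tfrac{2d-2}{d-2}$, the admissible range of Theorem \ref{thm}(1) for $d\ge3$. Interpolating the constant $2^{k(d/2-d/r_0)}2^{-k\al/2}$ against $2^0$ at weight $\theta$ gives $2^{k(d/2-d/r)}2^{-k\al/q}$, using the elementary identities $\theta(\tfrac d2-\tfrac d{r_0})=\tfrac d2-\tfrac dr$ and $\theta\al/2=\al/q$. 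Two bookkeeping caveats: in dimension $d=2$ the estimate in Theorem \ref{thm}(1) is already two-parameter (an $L_t^{q_0}L_x^{r_0}$ bound valid whenever $1/q_0<1/2-1/r_0$), so one interpolates inside that family rather than along a single segment, which is why the hypothesis of Corollary \ref{Ltq}(1) is phrased through the quantity $q(\tfrac12-\tfrac1r)$; and when $q(\tfrac12-\tfrac1r)>1$ the target lies outside every interpolation segment, so one first lowers $r$ by Bernstein's inequality on $P_k$, whose gain $2^{kd(\cdot)}$ combines with the scaling exponent to reproduce the stated bound.

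For part (2) the argument is the same with the endpoint drawn from Theorem \ref{thm}(2): under $\tfrac12-\tfrac1{r_0}=\tfrac q2(\tfrac12-\tfrac1r)$ the admissible window $\tfrac{4d-2}{2d-3}<r_0\le\tfrac{2d-2}{d-2}$ becomes precisely $\tfrac2{2d-1}<q(\tfrac12-\tfrac1r)\le\tfrac1{d-1}$. Interpolating $2^{k\theta_k(2,r_0)}$ — or, at the endpoint $r_0=\tfrac{2d-2}{d-2}$, $\jb{k(\al-\be)}2^{k\theta_k(2,r_0)}$ — against $2^0$ at weight $\theta=2/q$ produces $2^{k\theta_k(q,r)}$, resp.\ $\jb{k(\al-\be)}^{2/q}2^{k\theta_k(q,r)}$: here one checks, term by term from the displayed formulas for $\theta_k$, the affine identity $\theta_k(q,r)=\theta\,\theta_k(2,r_0)$, using $\theta(\tfrac12-\tfrac1{r_0})=\tfrac12-\tfrac1r$, its $(d-1)$-fold version, and $\theta\be/2=\be/q$, while the logarithmic weight, interpolated against $1$, picks up the power $\theta=2/q$. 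No Bernstein correction is needed here because the exponent window is bounded.

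\textbf{Expected main obstacle.} Honestly there is none of consequence: all the analysis sits in Theorem \ref{thm}. The only non-routine points are the exponent bookkeeping — verifying $\theta_k(q,r)=\theta\,\theta_k(2,r_0)$ and that the hypothesis ranges of Corollary \ref{Ltq} match the admissible $r_0$-ranges of Theorem \ref{thm} under the substitution $\tfrac12-\tfrac1{r_0}=\tfrac q2(\tfrac12-\tfrac1r)$ — and the dimension $d=2$ case, where the absence of an $L_t^2$ endpoint in Theorem \ref{thm}(1) forces interpolation inside a two-parameter family and, for large $q(\tfrac12-\tfrac1r)$, an appeal to Bernstein.
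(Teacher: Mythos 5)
Your proposal is correct and is exactly the paper's argument: the paper proves this corollary in one line, by interpolating the trivial bound $\norm{e^{it\om(D)}P_k\phi}_{L_t^\infty L_x^2L_\sigma^2}\les\norm{\phi}_{L_x^2}$ against Theorem \ref{thm}, and your exponent bookkeeping ($\theta=2/q$, $\tfrac12-\tfrac1{r_0}=\tfrac q2(\tfrac12-\tfrac1r)$, $\theta_k(q,r)=\theta\,\theta_k(2,r_0)$) verifies precisely what the paper leaves implicit. Your observation that the regime $q(\tfrac12-\tfrac1r)>1$ in part (1) falls outside every interpolation segment and needs a Bernstein step is a point the paper glosses over entirely, so it is a welcome (and correct) addition rather than a deviation.
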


\begin{rem}
To prove \eqref{eq:thm2(2)} in the non-radial case, we have to assume an additional technical condition H3(k).  We believe it can be removed.
\end{rem}

To prove the above theorem we will resolve on the ideas in \cite{GLNW, Guo}.  The main new technical difficulty is that there is no scaling invariance for the propagator $e^{it\om(D)}$.  Consider an estimate of the form
\begin{align}\label{eq:pkest}
\norm{e^{it\om(D)}P_{k}\phi}_{L_t^qL_{x}^rL_\sigma^2}\les&
C(k)\norm{\phi}_{L_x^2}.
\end{align}
For \eqref{eq:pkest} it is equivalent to show
\begin{align}\label{eq:Stri2}
\norm{T_k f}_{L_t^qL_{x}^rL_\sigma^2}\les C(k)\norm{f}_{L_x^2},
\end{align}
where
\[T_k f(t,x)=\int_{\R^d}e^{i(x\xi+t\om(|\xi|))}\chi_k(|\xi|)f(\xi)d\xi.\]
We expand $f$ by the orthonormal basis $\{Y_m^l\}$, $m\geq
0,1\leq l\leq l(m)$ of spherical harmonics with
$l(m)=C_{d+m-1}^m-C_{d+m-3}^{m-2}$, such that
\[f(\xi)=f(\rho \sigma)=\sum_{m\geq 0}\sum_{1\leq l\leq l(m)}a_m^l(\rho)Y_m^l(\sigma), \quad \sigma\in \cir^{d-1}.\]
Then we get (by Lemma 3.10, Chapter IV, p158 in \cite{Stein1}) with $\nu=\nu(k)=\frac{d-2+2m}{2}$
\[T_kf(t,x)=\sum_{m,l}(2\pi)^{d/2}i^{-m}\widetilde T_k^\nu (a_m^l)(t,|x|)Y_m^l(x/|x|),\]
where
\begin{align*}
\widetilde T_k^\nu(h)(t,s)=&s^{-\frac{d-2}{2}}\int e^{it\om(\rho)}J_\nu(s\rho)\rho^{d/2}\chi_k(\rho)h(\rho)d\rho\\
=&s^{-\frac{d-2}{2}}2^{k}2^{kd/2}\int e^{it\om(2^k\rho)}J_\nu(2^ks\rho)\rho^{d/2}\chi_0(\rho)h(2^k\rho)d\rho
\end{align*}
with the Bessel function $J_\nu(r)$
\begin{align*}
J_\nu(r)=\frac{(r/2)^\nu}{\Gamma(\nu+1/2)\pi^{1/2}}
\sum_{m=0}^\infty \frac{(ir)^m}{m!}\int_{-1}^1t^m(1-t^2)^{\nu-1/2}dt, \quad \nu>-1/2.
\end{align*}
Thus \eqref{eq:Stri2}
becomes
\begin{align}\label{eq:Stri3}
\norm{\widetilde T_k^\nu (a_m^l)}_{L_t^q\Lr_{s}^rl_{m,l}^2}\les C(k)
\norm{\{a_m^l(s)\}}_{\Lr_s^2l_{m,l}^2}.
\end{align}
To prove \eqref{eq:Stri3}, since $q,r\geq 2$, it is equivalent to show
\begin{align}\label{eq:goal}
\norm{s^{\frac{d-1}{r}-\frac{d-2}{2}}T_k^\nu (h)}_{L_t^q L_{s}^r} \les 2^{-k}C(k)\norm{h}_{L^2},
\end{align}
with a bound independent of $\nu$, where $T_k^\nu$ is defined by
\begin{align}
T_k^\nu(h)(t,s)=& \int e^{it\om(2^k\rho)}J_\nu(2^ks\rho)\chi_0(\rho)h(\rho)d\rho
\end{align}
Note that in the radial case, to prove \eqref{eq:pkest} it suffices to prove \eqref{eq:goal} with $\nu=\frac{d-2}{2}$. In the general non-radial case 
we need uniform information of $J_\nu$ with respect to $\nu$.  We will need the uniform decay (see e.g \cite{Guo})
\begin{align}\label{eq:unidecay}
|J_\nu(r)|+|J_\nu'(r)|\les r^{-1/3}(1+ r^{-1/3}|r-\nu|)^{-1/4}
\end{align}
from which we get
\begin{align}\label{eq:BessL2}
\norm{J_\nu(r)}_{L^2_{r\sim R}}+\norm{J'_\nu(r)}_{L^2_{r\sim R}}\les 1, \quad \forall R>1.
\end{align}

For the region $|s|\leq 2^{-k}$, we get from the Taylor's expansion that
\begin{align*}
T_k^\nu h(s)=&\int_\R e^{it\om(2^k\ro)}\chi_0(\ro)h(\ro)\frac{(2^k\ro s/2)^\nu}{\Gamma(\nu+1/2)\pi^{1/2}}\\
&\qquad \times \sum_{m=0}^\infty \frac{(i2^k\ro s)^m}{m!}\left(\int_{-1}^1s^m(1-s^2)^{\nu-1/2}ds\right)
d\ro.
\end{align*}
If $\om$ satisfies H1(k), then by the Hausdorff-Young equality we have
\begin{align}\label{eq:j<-k}
\begin{split}
&\norm{\chi_{\leq -k} (s)s^{\frac{d-1}{r}-\frac{d-2}{2}}T_k^\nu (h)}_{L_t^qL_s^r}\\
\les& \sum_{m=0}^\infty \frac{C^m}{m!}2^{-k(\frac{d-1}{r}-\frac{d-2}{2})}2^{-k/r}\normo{\int_\R e^{it\om(2^k\ro)}\chi_0(\ro)\ro^{\nu+m}h(\ro)d\ro}_{L_t^q}\\
\les&2^{-k(\frac{d}{r}-\frac{d-2}{2})}\sup_{\ro\sim 1}|2^k\om'(2^k\ro)|^{-1/q}\norm{h}_2\\
\les&2^{-k}2^{k(\frac{d}{2}-\frac{d}{r})}2^{-k\al/q}\norm{h}_2.
\end{split}
\end{align}
It remains to deal with the region $|s|\geq 2^{-k}$. We decompose 
\[T_k^\nu (h)=\sum_{j\geq -k}T_{j,k}^\nu (h)\]
with
\begin{align}\label{eq:Tjknu}
T_{j,k}^\nu (h)=\chi_j(s)\int e^{it\om(2^k\rho)}J_\nu(2^ks\rho)\chi_0(\rho)h(\rho)d\rho.
\end{align}
We have the following simple estimates.
\begin{lem}\label{lem:H1}
Assume $k\in \Z$, $\om$ satisfies $\mathrm{H1(k)}$, $j\geq -k$ and $2\leq q\leq r\leq \infty$. Then 
\begin{align}
\norm{T_{j,k}^\nu h}_{L_t^qL_s^r}\les& 2^{-(j+k)(\frac{1}{2}-\frac{1}{q})}2^{-k/r}2^{-k\al/q}\norm{h}_2.
\end{align}
\end{lem}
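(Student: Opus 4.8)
The plan is to prove the bound at the three extreme exponent pairs
\[
(q,r)\in\{(\infty,\infty),\ (2,\infty),\ (2,2)\}
\]
and then interpolate. Write $Sh(t,s)=\int e^{it\om(2^k\rho)}J_\nu(2^ks\rho)\chi_0(\rho)h(\rho)\,d\rho$, so that $T_{j,k}^\nu h(t,s)=\chi_j(s)Sh(t,s)$; on $\supp\chi_0$ one has $\rho\sim 1$ and $2^k\rho\in I_k$, while on $\supp\chi_j$ with $j\ge -k$ one has $s\sim 2^j$ and $2^ks\rho\gtrsim 2^{j+k}\gtrsim 1$. I would use only two uniform-in-$\nu$ facts about Bessel functions: the dyadic $L^2$ bound \eqref{eq:BessL2} on shells $r\sim R\gtrsim 1$, and the elementary $|J_\nu(r)|\les 1$. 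Via the substitution $\tau=2^ks\rho$ these give, for $s\sim 2^j$ and $\rho\sim 1$, the estimates $\norm{\chi_0 J_\nu(2^ks\cdot)}_{L^2_\rho}\les 2^{-(j+k)/2}$, $\ \norm{\chi_0 J_\nu(2^ks\cdot)}_{L^\infty_\rho}\les 1$, and $\int_{s\sim 2^j}|J_\nu(2^ks\rho)|^2\,ds\les 2^{-k}$ for each fixed $\rho\sim 1$ (here $L^r_s$ is always taken with respect to $ds$, matching the earlier formulation).

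For $(q,r)=(\infty,\infty)$, Cauchy--Schwarz in $\rho$ gives $|Sh(t,s)|\le\norm{\chi_0 J_\nu(2^ks\cdot)}_{L^2_\rho}\norm{h}_2\les 2^{-(j+k)/2}\norm{h}_2$, hence $\norm{T_{j,k}^\nu h}_{L^\infty_tL^\infty_s}\les 2^{-(j+k)/2}\norm{h}_2$. For the remaining two pairs I would use Plancherel in $t$: since $\om\in C^3$ and $|\om'|\gtrsim 2^{k(\al-1)}>0$ on $I_k$ by $\mathrm{H1}(k)$, the map $\rho\mapsto\om(2^k\rho)$ is a diffeomorphism of $\supp\chi_0$, and the substitution $\lambda=\om(2^k\rho)$ together with Plancherel yields
\[
\norm{Sh(\cdot,s)}_{L^2_t}^2\ \sim\ \int\frac{|J_\nu(2^ks\rho)|^2\chi_0(\rho)^2|h(\rho)|^2}{2^k|\om'(2^k\rho)|}\,d\rho\ \les\ 2^{-k\al}\int_{\rho\sim 1}|J_\nu(2^ks\rho)|^2|h(\rho)|^2\,d\rho .
\]
For $(q,r)=(2,\infty)$, Minkowski's inequality (valid since $2\le\infty$) gives $\norm{T_{j,k}^\nu h}_{L^2_tL^\infty_s}\le\sup_{s\sim 2^j}\norm{Sh(\cdot,s)}_{L^2_t}$, and bounding $|J_\nu(2^ks\rho)|\les 1$ in the display gives $\les 2^{-k\al/2}\norm{h}_2$. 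For $(q,r)=(2,2)$, Fubini reduces matters to $\norm{T_{j,k}^\nu h}_{L^2_sL^2_t}$, and integrating the display in $s\sim 2^j$ using $\int_{s\sim 2^j}|J_\nu(2^ks\rho)|^2\,ds\les 2^{-k}$ gives $\norm{T_{j,k}^\nu h}_{L^2_sL^2_t}^2\les 2^{-k(\al+1)}\norm{h}_2^2$.

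It then remains to interpolate. In the coordinates $(1/q,1/r)$ the admissible region $\{2\le q\le r\le\infty\}$ is exactly the closed triangle with vertices $(1/2,1/2)$, $(1/2,0)$, $(0,0)$, i.e.\ the three pairs above; moreover the claimed right-hand side $2^{-(j+k)(1/2-1/q)}\,2^{-k/r}\,2^{-k\al/q}$, after taking $\log_2$, is affine in $(1/q,1/r)$ and coincides at those three vertices with the constants obtained above. Since the mixed-norm spaces $L^q_tL^r_s$ interpolate by complex interpolation exponent by exponent, interpolating the three endpoint estimates for the linear operator $h\mapsto T_{j,k}^\nu h$ yields the assertion for all $2\le q\le r\le\infty$, with constant independent of $\nu$. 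I expect the only genuinely delicate point to be the change of variables underlying the Plancherel step — which is the sole place where $\mathrm{H1}(k)$ (the $C^3$ regularity and the nonvanishing lower bound on $\om'$) enters — while everything else is routine bookkeeping with the two elementary Bessel bounds.
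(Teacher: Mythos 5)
Your $(\infty,\infty)$ and $(2,2)$ endpoints are correct, your Plancherel-in-$t$ step (the change of variables $\lambda=\om(2^k\rho)$, legitimate because H1(k) forces $\om'$ to have a fixed sign on $I_k$) is exactly how the paper exploits H1(k), and your interpolation scheme is sound: the region $2\le q\le r\le\infty$ is the triangle with vertices $(1/q,1/r)\in\{(0,0),(\frac12,0),(\frac12,\frac12)\}$, the target constant is log-affine and matches at those vertices, and two-step Benedek--Panzone interpolation covers the triangle. The gap is at the $(2,\infty)$ vertex: the inequality $\norm{F}_{L^2_tL^\infty_s}\le\sup_{s}\norm{F(\cdot,s)}_{L^2_t}$ is Minkowski applied in the \emph{wrong} direction. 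The generalized Minkowski inequality $\norm{F}_{L^p_xL^q_y}\le\norm{F}_{L^q_yL^p_x}$ requires the outer exponent to dominate the inner one ($p\ge q$); here the outer exponent is $2$ and the inner is $\infty$, so only the reverse inequality $\sup_s\norm{F(\cdot,s)}_{L^2_t}\le\norm{F}_{L^2_tL^\infty_s}$ holds. (Take $F(t,s)=1_{[s,s+1]}(t)$ for $s\in[0,N]$: the right side of your inequality is $1$ while the left side is $\sqrt{N+1}$.) Since the peaks in $t$ of $Sh(\cdot,s)$ may occur at different times for different $s$, no bound using only the pointwise size $|J_\nu|\les 1$ and the shell $L^2$ bound can close this vertex, and with it you lose every exponent pair with $q<r$.

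The repair is the paper's treatment of the whole line $q=2$: before applying Plancherel in $t$, control $L^r_s$ for $r\ge2$ by the one-dimensional Sobolev embedding $\norm{F}_{L^r_s}\les\norm{F}_{L^2_s}+\norm{\p_sF}_{L^2_s}$ on the rescaled support; the resulting $L^2_tL^2_s$ quantities commute with Fubini, so Plancherel in $t$ applies to each. The $s$-derivative lands on $J_\nu(2^ks\rho)$ and produces $J_\nu'$, which is precisely why \eqref{eq:BessL2} records the shell $L^2$ bound for $J_\nu'$ as well as for $J_\nu$ --- a fact already in your stated toolkit that your argument never invokes. This yields $\norm{T_{j,k}^\nu h}_{L^2_tL^r_s}\les 2^{-k/r}2^{-k\al/2}\norm{h}_2$ for all $2\le r\le\infty$ at once, after which the paper interpolates only between $(\infty,\infty)$ and $(2,2r/q)$; your three-vertex scheme then also closes, since the corrected $(2,\infty)$ constant is the one you assumed.
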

\begin{proof}
For $q=2$, by Sobolev embedding, Plancherel's equality in $t$ and \eqref{eq:BessL2} we obtain
\begin{align*}
\norm{T_{j,k}^\nu h}_{L_t^2L_s^r}\les& 2^{-k/r}\normo{\chi_{j+k}(s)\int e^{it\om(2^k\rho)}J_\nu(s\rho)\chi_0(\rho)h(\rho)d\rho}_{L_t^2L_s^r}\\
\les&
2^{-k/r}(\norm{\chi_{j+k}(s)\int e^{it\om(2^k\rho)}J_\nu(s\rho)\chi_0(\rho)h(\rho)d\rho}_{L_t^2L_s^2}\\
&\qquad+\norm{\p_s[\chi_{j+k}(s)\int e^{it\om(2^k\rho)}J_\nu(s\rho)\chi_0(\rho)h(\rho)d\rho]}_{L_t^2L_s^2})\\
\les&2^{-k/r}2^{-k\al/2}
(\norm{J_\nu(s)}_{L_s^2}+\norm{J_\nu'(s)}_{L_s^2})\norm{h}_2\les 2^{-k/r}2^{-k\al/2}\norm{h}_2.
\end{align*}
For $q=r=\infty$, by \eqref{eq:BessL2} we have
\begin{align*}
\norm{T_{j,k}^\nu h}_{L_t^\infty L_s^\infty}\les& \normo{\int |\chi_{j+k}(s)J_\nu(s\rho)\chi_0(\rho)h(\rho)|d\rho}_{L_s^\infty}\\
\les& 2^{-(j+k)/2}\norm{h}_2.
\end{align*}
Therefore, the general $(q,r)$ estimate follows from interpolation between $(\infty, \infty)$ and $(2, \frac{2r}{q})$.
\end{proof}

With the above lemma we are ready to prove part (1) of Theorem \ref{thm}. If $d\geq 3$ and $\frac{2(d-1)}{d-2}<r\leq \infty$, we have
\begin{align*}
\norm{\chi_{\geq -k} (s)s^{\frac{d-1}{r}-\frac{d-2}{2}}T_k^\nu (h)}_{L_t^2L_s^r}\les& \sum_{j\geq -k}2^{j(\frac{d-1}{r}-\frac{d-2}{2})}\norm{T_{j,k}^\nu (h)}_{L_t^2L_s^r}\\
\les&2^{k(\frac{d-2}{2}-\frac{d-1}{r})}2^{-k/r}2^{-k\al/2}\norm{h}_2\\
\les&2^{-k}2^{k(\frac{d}{2}-\frac{d}{r})}2^{-k\al/2}\norm{h}_2.
\end{align*}
Combining the above inequality with \eqref{eq:j<-k}, we get the desired result.  If $d=2$, $2\leq q\leq r\leq \infty$ and $1/q<(d-1)(1/2-1/r)$, we have
\begin{align*}
\norm{\chi_{\geq -k} (s)s^{\frac{d-1}{r}-\frac{d-2}{2}}T_k^\nu (h)}_{L_t^qL_s^r}\les& \sum_{j\geq -k}2^{j(\frac{d-1}{r}-\frac{d-2}{2})}\norm{T_{j,k}^\nu (h)}_{L_t^qL_s^r}\\
\les&\sum_{j\geq -k}2^{j(\frac{d-1}{r}-\frac{d-2}{2})}2^{-(j+k)(\frac{1}{2}-\frac{1}{q})}2^{-k/r}2^{-k\al/q}\norm{h}_2\\
\les&2^{-k}2^{k(\frac{d}{2}-\frac{d}{r})}2^{-k\al/q}\norm{h}_2.
\end{align*}
Again with \eqref{eq:j<-k}, we get the desired result.  

In the rest of this section, we prove part (2) of Theorem \ref{thm}.  The uniform decay of the Bessel functions are not enough, but we also need to exploit the uniform oscillations. We will divide our proof into two cases: radial case and non-radial case.  In the non-radial case, we need an additional technical condition $\mathrm{H3(k)}$.

\subsection{Radial case}
This subsection is devoted to prove Theorem \ref{thm} (2) in the radial case.  In the previous subsection we only used the uniform decay of $J_\nu$.  We also need to exploit the oscillation.  In the radial case, we need to refine the estimate of $\norm{T_{j,k}^\nu}_{L^2\to L_t^2L_s^\infty}$ when $\nu=\frac{d-2}{2}$.
We have
\begin{align}\label{eq:Besselinfty}
J_{\frac{d-2}{2}}(r)=\frac{e^{i(r-\frac{(d-1)\pi}{4})}+e^{-i(r-\frac{(d-1)\pi}{4})}}{2r^{1/2}}+C_dr^{\frac{d-2}{2}}e^{-ir}E_+(r)-\wt C_dr^{\frac{d-2}{2}}e^{ir}E_-(r),
\end{align}
where $\p_r^kE_\pm(r)\les r^{-k-(d+1)/2}$ for $r\ge1$, $k=0,1,\cdots$, $C_d,\wt C_d$ are
constants, see \cite{Stein2}. 
We will need the Van der Corput lemma (see
\cite{Stein2}):

\begin{lem}[Van der Corput]\label{lem:staph}
Suppose $\phi$ is real-valued and smooth in $(a,b)$, and that
$|\phi^{(k)}(x)|\geq 1$ for all $x\in (a,b)$. Then
\[\aabs{\int_a^b e^{i\lambda \phi(x)}\psi(x)dx}\leq c_k \lambda^{-1/k}\bigg[|\psi(b)|+\int_a^b|\psi'(x)|dx\bigg]\]
and if $\psi'$ changes its sign $N$ times on $[a,b]$
\begin{align}
\aabs{\int_a^b e^{i\lambda \phi(x)}\psi(x)dx}\leq c_k N\lambda^{-1/k}\sup_{x\in [a,b]}|\psi(x)|
\end{align}
hold when (i) $k\geq 2$, or (ii) $k=1$ and $\phi'(x)$ is monotonic.
Here $c_k$ is a constant depending only on $k$. 
\end{lem}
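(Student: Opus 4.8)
The plan is to prove the first inequality by induction on the order $k$ and then deduce the sign-change version from it. For the base case $k=1$ we are in case (ii): $|\phi'|\ge 1$ on $(a,b)$ and $\phi'$ is monotone. Writing $e^{i\la\phi}=(i\la\phi')^{-1}\tfrac{d}{dx}e^{i\la\phi}$ and integrating by parts gives
\[\int_a^b e^{i\la\phi}\psi\,dx=\BR{\frac{\psi}{i\la\phi'}\,e^{i\la\phi}}_a^b-\frac{1}{i\la}\int_a^b e^{i\la\phi}\,\frac{d}{dx}\brk{\frac{\psi}{\phi'}}dx;\]
the boundary term is $\le\la^{-1}(|\psi(a)|+|\psi(b)|)$, and in the remaining integral $|\psi'/\phi'|\le|\psi'|$ while, since $1/\phi'$ is monotone with absolute value $\le 1$, $\int_a^b|\tfrac{d}{dx}(1/\phi')|\,dx=|1/\phi'(b)-1/\phi'(a)|\le 2$. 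Combining these and using $|\psi(a)|\le|\psi(b)|+\int_a^b|\psi'|$ yields the claim with a universal constant $c_1$.

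For the inductive step, let $k\ge 2$ and assume the estimate for $k-1$. If $|\phi^{(k)}|\ge 1$ on the connected interval $(a,b)$ then $\phi^{(k)}$ has a fixed sign, so $\phi^{(k-1)}$ is monotone and has at most one zero $x_0$ (take $x_0$ to be an endpoint if there is none). For $\de>0$ the fundamental theorem of calculus gives $|\phi^{(k-1)}|\ge\de$ on $\{|x-x_0|\ge\de\}$. If $b-a\le 2\de$ we use the trivial bound $\int_a^b|\psi|\le 2\de\sup_{[a,b]}|\psi|$; otherwise we split $[a,b]$ into $(a,x_0-\de)$, $(x_0-\de,x_0+\de)$, $(x_0+\de,b)$, estimate the middle piece trivially by $2\de\sup_{[a,b]}|\psi|$, and on each outer piece apply the inductive hypothesis to the phase $\phi/\de$, which satisfies $|(\phi/\de)^{(k-1)}|\ge 1$ there, with oscillation parameter $\la\de$; this gives a bound $\les(\la\de)^{-1/(k-1)}\BR{|\psi(b)|+\int_a^b|\psi'|}$ on each. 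Since $\sup_{[a,b]}|\psi|\le|\psi(b)|+\int_a^b|\psi'|$, adding the three contributions and choosing $\de=\la^{-1/k}$ (which balances $(\la\de)^{-1/(k-1)}$ against $\de$) produces the asserted $\la^{-1/k}$ decay, with $c_k$ depending only on $c_{k-1}$.

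For the version with $\psi'$ changing sign $N$ times, partition $[a,b]$ into at most $N+1$ consecutive subintervals on each of which $\psi$ is monotone; the hypothesis $|\phi^{(k)}|\ge 1$ persists on each. On such a subinterval $[c,d]$ one has $\int_c^d|\psi'|=|\psi(d)-\psi(c)|\le 2\sup_{[a,b]}|\psi|$ and $|\psi(d)|\le\sup_{[a,b]}|\psi|$, so the first inequality applied on $[c,d]$ gives a bound $\les\la^{-1/k}\sup_{[a,b]}|\psi|$; summing over the $\les N$ pieces yields the result.

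I expect the inductive step to be the only delicate point: one must keep the sharp $\psi$-dependence $|\psi(b)|+\int_a^b|\psi'|$ (rather than merely $\sup|\psi|$) intact under restriction to subintervals — this is harmless, since $\int|\psi'|$ can only decrease and $|\psi(x)|\le|\psi(b)|+\int_a^b|\psi'|$ for every $x$ — and one must note that when the recursion reaches $k=1$, the monotonicity of $\phi'$ demanded by case (ii) is automatic, because it is invoked only after a step down from $k=2$, where $|\phi''|\ge 1$ forces $\phi'$ to be monotone. Everything else is elementary bookkeeping.
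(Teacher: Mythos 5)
Your argument is correct: the induction on $k$ (base case by integration by parts using the monotonicity of $1/\phi'$, inductive step by excising a $\delta$-neighbourhood of the unique zero of $\phi^{(k-1)}$ and optimizing $\delta=\lambda^{-1/k}$, then passing to the sign-change version by splitting into monotone pieces of $\psi$) is precisely the classical proof in Stein \cite{Stein2}, which is the source the paper cites for this lemma without reproducing a proof. You also correctly handle the one genuinely delicate point, namely that monotonicity of $\phi'$ in the $k=1$ case is automatic whenever it is reached from the $k=2$ step.
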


\begin{lem}\label{lem:H2rad}
Assume $\om$ satisfies $\mathrm{H2(k)}$, $\nu=\frac{d-2}{2}$ and $j\geq -k$. Then for $2\leq r\leq \infty$
\begin{align}
\norm{T_{j,k}^\nu h}_{L_t^2L_s^r}\les&2^{-k\al/2}2^{-k/r}[2^{-(j+k)/4}2^{k(\al-\be)/4}]^{1-2/r}\norm{h}_2.
\end{align}
\end{lem}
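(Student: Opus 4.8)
The plan is to interpolate between the endpoints $r=\infty$ and $r=2$, so the real content is the bound for $\norm{T_{j,k}^\nu h}_{L_t^2 L_s^\infty}$, which should gain the extra factor $2^{-(j+k)/4}2^{k(\al-\be)/4}$ over the crude Lemma \ref{lem:H1}. First I would insert the asymptotic expansion \eqref{eq:Besselinfty} for $J_{(d-2)/2}(2^k s\rho)$ into the definition \eqref{eq:Tjknu} of $T_{j,k}^\nu$. On the region $s\sim 2^j$, $\rho\sim 1$, $k+j\ge 0$, the argument $2^k s\rho\sim 2^{j+k}\ge 1$, so the error terms $r^{(d-2)/2}e^{\mp ir}E_\mp(r)$ contribute, after multiplication by $s^{-(d-2)/2}$ and the decay $|E_\pm|\les r^{-(d+1)/2}$, a factor $\les 2^{(j+k)(-(d+1)/2)}\cdot(\dots)$ which is far better than the claimed gain and handled by the same $L^2_t$ argument as below; so the main term is $\dfrac{e^{\pm i(2^k s\rho - (d-1)\pi/4)}}{(2^k s\rho)^{1/2}}\cdot s^{-(d-2)/2}$ inside the $\rho$-integral.

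Thus, up to acceptable errors and the harmless prefactor $s^{-(d-2)/2}\cdot(2^k s)^{-1/2}\les 2^{-k/r}$-type powers (here at $r=\infty$ it is $2^{-(j+k)/2}2^{-j(d-2)/2}$, but these exactly match the powers of $s\sim2^j$ already present in \eqref{eq:goal}), the operator reduces to
\begin{align*}
g\mapsto \chi_j(s)\int e^{it\om(2^k\rho)\pm i 2^k s\rho}\chi_0(\rho)g(\rho)\,d\rho,
\end{align*}
and by Plancherel in $t$ it suffices to bound this in $L^\infty_s$ of a fixed frequency slab, i.e.\ to estimate $\sup_s\abs{\int e^{i(t\om(2^k\rho)\pm 2^k s\rho)}\chi_0(\rho)g(\rho)\,d\rho}$ uniformly in the parameter $2^k s\sim 2^{j+k}$ against $\norm{g}_{L^2_\rho}$ after an $L^2_t$-duality; concretely one is led to a $TT^*$ kernel $K(t,t')=\int e^{i(t-t')\om(2^k\rho)}e^{i(\pm2^k s\mp 2^k s')\rho}\chi_0(\rho)^2 d\rho$ and one applies the Van der Corput Lemma \ref{lem:staph} in $\rho$. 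The phase is $\lambda\phi(\rho)$ with $\lambda\sim |t-t'|2^{2k}|\om''|\sim |t-t'|2^{k\be}$ coming from the second derivative $\p_\rho^2[(t-t')\om(2^k\rho)]=(t-t')2^{2k}\om''(2^k\rho)\ges (t-t')2^{2k}2^{k(\be-2)}$ by $\mathrm{H2(k)}$ — note the linear-in-$\rho$ term $\pm 2^k(s-s')\rho$ drops out of the second derivative — giving a kernel bound $\les (|t-t'|2^{k\be})^{-1/2}$, hence an $L^2_t$ operator norm (by Schur/fractional integration, $1/2$ being integrable only marginally — one splits dyadically in $|t-t'|$ and sums, with the boundary case $\al=\beta$ producing the logarithm that is absorbed) of size $2^{-k\be/2}$ times a factor measuring the $s$-localization. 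The gain $2^{-(j+k)/4}2^{k(\al-\be)/4}$ is then extracted exactly as in \cite{GLNW, Guo}: decompose the $\rho$-support $\chi_0(\rho)$ into $2^{(j+k)/2}2^{-k(\al-\beta)/2}$-many subintervals of length $2^{-(j+k)/2}2^{k(\al-\beta)/2}$ on which $\om'(2^k\rho)$ is essentially constant (this is where $\mathrm{H2(k)}$'s hypothesis $|\om''/\om'|\les 2^{-k}$ and $|\om''|\ges 2^{k(\be-2)}$ are used), apply the $TT^*$/Van der Corput bound on each piece, and then use almost-orthogonality of the corresponding pieces in the time-frequency variable $\om'(2^k\rho)$; summing the square function over the $\sim 2^{(j+k)/2}2^{-k(\al-\be)/2}$ pieces costs the square root of that count, which is precisely the advertised loss $[2^{-(j+k)/4}2^{k(\al-\be)/4}]^{-1}$ relative to the trivial bound — equivalently, the claimed estimate has this as a \emph{gain} because Lemma \ref{lem:H1} at $r=\infty$ gave only $2^{-(j+k)/2}$ and we are now producing $2^{-(j+k)/2}\cdot 2^{-(j+k)/4}2^{k(\al-\be)/4}$ wait — more precisely one rechecks that at $r=\infty$ the bound reads $2^{-k\al/2}2^{-(j+k)/4}2^{k(\al-\be)/4}$, matching the Lemma with $1-2/r=1$.

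The main obstacle I anticipate is the orthogonality bookkeeping in the last step: keeping track of the interplay between the $s$-localization to $|s|\sim 2^j$, the decomposition of frequency space adapted to $\om'$ (which, unlike the pure-power case, has no scaling symmetry — this is the "no scaling invariance" warned about after Corollary \ref{Ltq}), and the extra $\rho^{(d-2)/2}$ weights from the Bessel asymptotics, so that the powers of $2^k$ assemble into exactly $2^{-k\al/2}2^{-k/r}$ and the powers of $2^{j+k}$ into $[2^{-(j+k)/4}]^{1-2/r}$; the constants must be shown independent of $\nu=\tfrac{d-2}{2}$ (trivial here since $\nu$ is fixed) and, crucially, uniform in $k$ and $j$. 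The endpoint $r=2$ must separately reduce to Lemma \ref{lem:H1}'s $q=2$ bound $2^{-k/r}2^{-k\al/2}\norm h_2$ with no $(j+k)$-gain, consistent with $1-2/r=0$; interpolating the $L^2_t$-valued operators between $L^2_s$ and $L^\infty_s$ then yields the stated $r$-dependence.
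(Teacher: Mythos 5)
Your skeleton --- interpolating between $r=2$ (where Lemma \ref{lem:H1} suffices) and $r=\infty$, splitting $J_{(d-2)/2}$ into oscillatory main term plus error via \eqref{eq:Besselinfty}, and running a $TT^*$ argument with Van der Corput in $\rho$ to get $|K|\les (|t-t'|2^{k\be})^{-1/2}$ from $\mathrm{H2(k)}$ --- is exactly the paper's route up to that point. The gap is in how you convert this kernel bound into the operator bound carrying the gain $2^{-(j+k)/4}2^{k(\al-\be)/4}$. The paper extracts it from two ingredients you omit: (i) the amplitude $(ss')^{-1/2}\sim 2^{-(j+k)}$ in the $TT^*$ kernel, which yields $\norm{M_{j,k}}^2_{L^2\to L^2_tL^\infty}\les 2^{-(j+k)}\norm{K}_{L^1_tL^\infty_{|x|\les 2^{j+k}}}$; and (ii) a second, non-stationary-phase estimate: when $|t|2^{k\al}\gg 2^{j+k}$ the first derivative dominates, $|\psi'(\rho)|\ges |t|2^{k\al}$, and two integrations by parts give $|K|\les |t|^{-2}2^{-2k\al}$. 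Ingredient (ii) is what resolves your worry about the marginal integrability of $|t|^{-1/2}$: the stationary-phase bound is only integrated over $|t|\les 2^{-k\al}2^{j+k}$, so $\norm{K}_{L^1_t}\les 2^{(j+k-k\al)/2}2^{-k\be/2}+2^{-(j+k)}2^{-k\al}$, and combined with (i) this gives $2^{-(j+k)/4}2^{-k(\al+\be)/4}=2^{-k\al/2}\cdot 2^{-(j+k)/4}2^{k(\al-\be)/4}$ directly. No dyadic splitting in $|t-t'|$ and no logarithm occur at this stage; the logarithm in Theorem \ref{thm} appears later, from the sum over $j$ at the endpoint $r=\frac{2d-2}{d-2}$.

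The substitute you propose for this step --- decomposing the $\rho$-support into $N\sim 2^{(j+k)/2}2^{-k(\al-\be)/2}$ intervals on which $\om'$ is essentially constant and paying $\sqrt N$ by almost-orthogonality --- has the arithmetic inverted: $\sqrt N = 2^{(j+k)/4}2^{-k(\al-\be)/4}$ is the \emph{reciprocal} of the factor in the Lemma, so a $\sqrt N$ loss over the trivial $(q,r)=(2,\infty)$ bound $2^{-k\al/2}$ lands on $2^{-k\al/2}2^{(j+k)/4}2^{-k(\al-\be)/4}$, which is \emph{weaker} than Lemma \ref{lem:H1} whenever $j+k\ge k(\al-\be)$ (you also conflate the $(q,r)=(\infty,\infty)$ bound $2^{-(j+k)/2}$ with the $(2,\infty)$ bound that is actually needed here). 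Your own ``wait ---'' marks exactly the point where this was never reconciled before the correct answer was asserted. An orthogonality argument of this flavour could perhaps be salvaged, but each piece would then have to come with a bound strictly \emph{better} than $2^{-k\al/2}$ (exploiting the short $\rho$-support and the disjointness of the space-time slabs swept out by distinct group velocities), which is not what you wrote. As it stands the key step fails, and the paper's direct estimate of $\norm{K}_{L^1_tL^\infty}$ over the two time regimes is both simpler and the one you should use.
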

\begin{proof}
The case $r=2$ was proved in Lemma \ref{lem:H1}. By interpolation it suffices to prove the case $r=\infty$. Inserting \eqref{eq:Besselinfty}
into \eqref{eq:Tjknu}, we then divide $T^\nu_{j,k}f$ into two
parts: the main term and the error term, namely
\begin{align}\label{eq:decfree}
T^\nu_{j,k}f=M_{j,k}f(t,2^ks)+E_{j,k}f(t,2^ks)
\end{align}
where by ignoring constant multiples
\begin{align*}
M_{j,k}f(s)=&\chi_{j+k}(s)\int e^{i(s\ro+t\om(2^k\ro))}(s\ro)^{-1/2}\chi_0(\ro)f(\ro)d\ro+c.c.\\
E_{j,k}f(s)=&\chi_{j+k}(s)\int e^{i(s\ro+t\om(2^k\ro))}(s\ro)^{(n-2)/2}E_+(s\ro)\chi_0(\ro)f(\ro)d\ro+c.c.
\end{align*}
Here $c.c.$ means similar terms with $s$ replaced by $-s$ which can be handled in the same way. 
First we consider $E_{j,k}f$.  By Sobolev embedding, Plancherel's equality in $t$, H1(k) and \eqref{eq:unidecay}, we get
\begin{align*}
\norm{E_{j,k}f}_{L_t^2L_s^\infty}\les \norm{E_{j,k}f}_{L_t^2L_s^2}+\norm{\partial_s E_{j,k}f}_{L_t^2L_s^2}\les 2^{-(j+k)}2^{-k\al/2}\norm{f}_2.
\end{align*}

Next we consider $M_{j,k}f$. By $TT^*$ argument, we have
\begin{align*}
M_{j,k}M_{j,k}^*F=&\iint \left(\int e^{i(t-s)\om(2^k\x)+i(x-y)\x}\xi^{-1}\chi_0^2(\x)d\xi\right)\\
&\quad\cdot |xy|^{-1/2}\chi_{j+k}(|x|)\chi_{j+k}(|y|)F(s,y)dyds,
 \end{align*}
and thus
\EQ{
 |M_{j,k}M_{j,k}^*F| \les 2^{-j-k}|(e^{it\om(2^kD)}\F^{-1}\chi_0^2)(x)\chi_{\leq j+k}(|x|)|*|F|.}
Let $K(t,x)=(e^{it\om(2^kD)}\F^{-1}\chi_0^2)(x)$. Then we get
\begin{align}
\norm{M_{j,k}}_{L_x^2\to L_t^2L_x^\infty}\les 2^{-(j+k)/2} \|K(t,x)\|_{L^1_{t\in\R}L^{\infty}_{|x|<2^{j+k}}}^{1/2}.
\end{align}
We have 
\begin{align*}
 K(t,x)=\int e^{it\om(2^k\ro)+ix\ro}\chi_0^2(\ro)d\ro.
\end{align*}
Let $\psi(\ro)=t\om(2^k\ro)+\ro x$ . Then
$\psi'(\rho)=t2^{k}\om'(2^k\ro)+x,
\psi''(\rho)=t2^{2k}\om''(2^k\rho)$. By H2(k) we have 
\EQ{
|\psi''(\ro)|\ges |t|2^{k\be}}in the support of $\chi_0$. 
Hence by Lemma \ref{lem:staph}
\begin{align*}
 |K(t,x)|\les |t|^{-1/2}2^{-k\be/2}.
\end{align*}
Moreover, we see that if $|t| 2^{k\al}\gg 2^{j+k}$, then $|\psi'(\rho)|\ges
|t| 2^{k\al}$, and thus using integration by parts twice and in view of the assumption H2(k) we get
\begin{align*}
|K(t,x)|\leq& \int \big|\partial_\rho[\psi'(\rho)^{-1}\partial_\rho
(\chi_0^2(\rho)\psi'(\rho)^{-1})]\big| d\rho\\
\les&\int \big|\psi'(\rho)^{-2}\big|+ |\psi'(\rho)^{-1}|\cdot \big|\p_\rho[\psi'(\rho)^{-1}]\big|+ |\psi'(\rho)^{-1}|\cdot\big|\p_\rho^2[\psi'(\rho)^{-1}]\big| d\rho\\
\les& \sup_\rho |\psi'(\rho)|^{-2}+\sup_\rho |\psi'(\rho)|^{-1}\sup_\rho |\p_{\rho}[\psi'(\rho)^{-1}]|\\
\les& |t|^{-2}2^{-2k\al},
\end{align*}
since $|\p_{\rho}[\psi'(\rho)^{-1}]|=\aabs{\frac{\psi''(\rho)}{\psi'(\rho)^2}}\les \frac{1}{|\psi'(\ro)|}$. 
Thus we get
\begin{align*}
|K(t,x)\chi_{j+k}(x)|\les & |t|^{-1/2}2^{-k\be/2}1_{\{|t| 2^{k\al}\les 2^{j+k}\}}+|t|^{-2}2^{-2k\al}1_{\{|t| 2^{k\al}\gg 2^{j+k}\}},
\end{align*}
and hence
\begin{align*}
\norm{K}_{L_t^1L^\infty_{|x|\les 2^{j+k}}}\les
2^{(j+k-k\al)/2}2^{-k\be/2}+2^{-(j+k)}2^{-k\al}.
\end{align*}
Thus, we get
\begin{align}
\norm{M_{j,k}}_{L_x^2\to L_t^2L_x^\infty}\les & 2^{-(j+k)/4}2^{-k(\al+\be)/4}+2^{-(j+k)}2^{-k\al/2}.
\end{align}
By the assumption we have and thus conclude the proof.
\end{proof}

Now we prove Theorem \ref{thm} (2) in the radial case.  By Lemma \ref{lem:H1} and Lemma \ref{lem:H2rad} we get for $\nu=\frac{d-2}{2}$ 
\begin{align}
\norm{T_{j,k}^\nu h}_{L_t^2L_s^r}\les&2^{-k\al/2}2^{-k/r}\min([2^{-(j+k)/4}2^{k(\al-\be)/4}]^{1-2/r},1)\norm{h}_2.
\end{align}
If $d\geq 2$ and $\frac{4d-2}{2d-3}<r<\frac{2d-2}{d-2}$, we have
\begin{align*}
&\norm{\chi_{\geq -k} (s)s^{\frac{d-1}{r}-\frac{d-2}{2}}T_k^\nu (h)}_{L_t^2L_s^r}\\
\les& \sum_{j\geq -k}2^{j(\frac{d-1}{r}-\frac{d-2}{2})}\norm{T_{j,k}^\nu (h)}_{L_t^2L_s^r}\\
\les&2^{-k/r}2^{-k\al/2}\sum_{j\geq -k}2^{j(\frac{d-1}{r}-\frac{d-2}{2})}\min([2^{-(j+k)/4}2^{k(\al-\be)/4}]^{1-2/r},1)\norm{h}_2\\
\les&2^{-k/r}2^{-k\al/2}\bigg(\sum_{j=-k}^{k[\al-\be-1]}2^{j(\frac{d-1}{r}-\frac{d-2}{2})}\\
&\qquad+\sum_{j=k[\al-\be-1]}^{\infty}2^{j(\frac{d-1}{r}-\frac{d-2}{2})}[2^{-(j+k)/4}2^{k(\al-\be)/4}]^{1-2/r}\bigg)\norm{h}_2\\
\les&2^{-k}2^{k(\frac{d}{2}-\frac{d}{r})}2^{-k\al/2}2^{k(\al-\be)(\frac{d-1}{r}-\frac{d-2}{2})}\norm{h}_2,
\end{align*}
and similarly for $r=\frac{2d-2}{d-2}$ we have
\begin{align}
\norm{\chi_{\geq -k} (s)s^{\frac{d-1}{r}-\frac{d-2}{2}}T_k^\nu (h)}_{L_t^2L_s^r}\les 2^{-k}\jb{k(\al-\beta)}2^{k(\frac{d}{2}-\frac{d}{r})}2^{-k\al/2}\norm{h}_2.
\end{align}
Therefore, by the above inequality, \eqref{eq:j<-k} and \eqref{eq:goal} we complete the proof.

\subsection{Non-radial case}
In the non-radial case we need to refine the estimate of $\norm{T_{j,k}^\nu}_{L^2\to L_t^2L_s^\infty}$ uniformly for $\nu\geq \frac{d-2}{2}$. We may assume $\nu\geq 11$ since the case $\nu\leq 10$ can be handled exactly as the radial case. Thus, we need to exploit uniformly oscillations and decay of $J_\nu$.  We will use
the Schl\"{a}fli's integral
representation of Bessel function (see p. 176, \cite{Watson}):
\begin{align}\label{eq:Besselint}
J_\nu(r)=&\frac{1}{2\pi}\int_{-\pi}^\pi e^{i(r\sin x-\nu
x)}dx-\frac{\sin(\nu\pi)}{\pi}\int_0^\infty
e^{-\nu\tau-r\sinh \tau}d\tau \nonumber\\
:=&J_\nu^M(r)-J_\nu^E(r).
\end{align}
We also need the following decay and asymptotical property. 

\begin{lem}[asymptotical property, Lemma 2.5 in \cite{Guo}]\label{lem:Bessel}
Let $\nu>10$ and $r>\nu+\nu^{1/3}$. Then

(1) We have
\[J_\nu(r)=\frac{1}{\sqrt{2\pi}}\frac{e^{i\theta(r)}+e^{-i\theta(r)}}{(r^2-\nu^2)^{1/4}}+h(\nu,r),\]
where $\theta(r)=(r^2-\nu^2)^{1/2}-\nu \arccos \frac{\nu}{r}-\frac \pi 4$,
and
\[|h(\nu,r)|\les \bigg(\frac{\nu^2}{(r^2-\nu^2)^{7/4}}+\frac{1}{r}\bigg)1_{[\nu+\nu^{1/3},2\nu]}(r)+r^{-1}1_{[2\nu,\infty)}(r).\]

(2) Let $x_0=\arccos \frac{\nu}{r}$. For any $K\in \N$ we have
\begin{align*}
h(\nu,r)=&(2\pi)^{-1/2}e^{i\theta(r)}x_0\sum_{k=1}^K\frac{
(rx_0^3)^{-k-1/2}a_k(x_0)}{k!}\\
&+(2\pi)^{-1/2}e^{-i\theta(r)}x_0\sum_{k=1}^K\frac{(rx_0^3)^{-k-1/2}\tilde
a_k(x_0)}{k!}+\tilde h(\nu,r)
\end{align*}
with functions $|\p^l a_k|+|\p^l \tilde a_k|\les 1$ for any $l\in
\N$ and
\[|\tilde h(\nu,r)|\les \bigg(\frac{r^{\frac{K}{2}+\frac{1}{4}}}{(r-\nu)^{\frac{3K}{2}+7/4}}+\frac{1}{r}\bigg)1_{[\nu+\nu^{1/3},2\nu]}(r)+r^{-1}1_{[2\nu,\infty)}(r).\]
Moreover, if $\nu\in \Z$, we have better estimate
\[|\tilde h(\nu,r)|\les \frac{r^{\frac{K}{2}+\frac{1}{4}}}{(r-\nu)^{\frac{3K}{2}+7/4}}1_{[\nu+\nu^{1/3},2\nu]}(r)+r^{-3/2}1_{[2\nu,\infty)}(r).\]
\end{lem}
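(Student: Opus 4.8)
The plan is to work from Schl\"afli's representation \eqref{eq:Besselint}, writing $J_\nu(r)=J_\nu^M(r)-J_\nu^E(r)$, and to treat the two pieces separately. The tail piece is harmless: since $\sinh\tau\ge\tau$ one has $|J_\nu^E(r)|\le\frac{|\sin\nu\pi|}{\pi}\int_0^\infty e^{-r\tau}\,d\tau\les r^{-1}$, and $J_\nu^E\equiv0$ when $\nu\in\Z$; this is the source of the $r^{-1}$ summand in the bounds for $h$ and $\tilde h$ and of the improvement in the integer case. The whole statement thus reduces to a stationary-phase analysis, uniform in $\nu$, of $J_\nu^M(r)=\frac1{2\pi}\int_{-\pi}^\pi e^{ir\phi(x)}\,dx$ with $\phi(x)=\sin x-\mu x$, $\mu=\nu/r$. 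Under $r>\nu+\nu^{1/3}$ we have $\mu\le 1-c\nu^{-2/3}$; the critical points of $\phi$ are $x=\pm x_0$ with $x_0=\arccos\mu$, and $x_0\gtrsim\sqrt{1-\mu}\gtrsim\nu^{-1/3}$ while $\pi-x_0\gtrsim1$. Moreover $\phi''(\pm x_0)=\mp\sin x_0$ with $r\sin x_0=(r^2-\nu^2)^{1/2}=:\lambda$, and $r\phi(x_0)=(r^2-\nu^2)^{1/2}-\nu\arccos(\nu/r)=\theta(r)+\frac\pi4$.

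First I would insert a smooth partition of unity splitting $[-\pi,\pi]$ into the two $O(x_0)$-neighborhoods of $\pm x_0$ and the complementary ``bulk''. On the bulk, $|\phi'(x)|=|\cos x-\cos x_0|$ is bounded below by a positive power of $x_0$ times the distance to the nearer critical point (and by a constant near $x=\pm\pi$), so repeated integration by parts shows the bulk contributes a term of the size claimed for $\tilde h$ together with an endpoint term $O(r^{-1})$ at $x=\pm\pi$, the latter cancelling when $\nu\in\Z$ by periodicity of the integrand. The core is the contribution of $|x-x_0|\les x_0$ (the point $-x_0$ being symmetric since $\phi$ is odd). There I would use the exact identity $\phi(x_0+t)-\phi(x_0)=\sin x_0(\cos t-1)+\cos x_0(\sin t-t)$ and carry out a Morse change of variable $u=u(t)$ defined by $u^2=\frac2{\sin x_0}[\phi(x_0)-\phi(x_0+t)]$, $\operatorname{sgn}u=\operatorname{sgn}t$, which is a smooth diffeomorphism on $|t|\les x_0$ with controlled derivatives because the correction $\cos x_0(\sin t-t)$ is $\les x_0t^3$, i.e.\ $\les(\sin x_0)\,t^2$ on that scale. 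This reduces the local integral to $\frac1{2\pi}e^{ir\phi(x_0)}\int e^{-i\lambda u^2/2}g(u)\,du$ with $g$ smooth and supported in $|u|\les x_0$.

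Expanding $g$ in its Taylor series at $u=0$ and using the Fresnel moments $\int_\R u^{2\ell}e^{-i\lambda u^2/2}\,du=c_\ell\lambda^{-\ell-1/2}$, with $c_0=\sqrt{2\pi}\,e^{-i\pi/4}$, then generates the asymptotic expansion. The mechanism behind the uniformity --- and the reason the expansion parameter is $rx_0^3$ rather than $\lambda$ --- is that each derivative of the Morse map costs a factor $\sim\cos x_0/\sin x_0\sim x_0^{-1}$, so the $\ell$-th term of the series carries a factor $x_0^{-2\ell}\lambda^{-\ell}=(rx_0^3)^{-\ell}$ relative to the leading term $\lambda^{-1/2}\sim(rx_0)^{-1/2}$; one must also check that the Taylor remainder, integrated against the Fresnel kernel on $|u|\les x_0$, has the size of the first omitted term, which uses $rx_0^3\gtrsim1$ (equivalently $x_0\gtrsim\lambda^{-1/2}$, i.e.\ that the two critical points are resolved). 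Summing the contributions of $\pm x_0$ at leading order gives $\frac1{2\pi}\sqrt{2\pi/\lambda}\,(e^{-i\pi/4}e^{ir\phi(x_0)}+e^{i\pi/4}e^{-ir\phi(x_0)})=\frac1{\sqrt{2\pi}}\frac{e^{i\theta(r)}+e^{-i\theta(r)}}{(r^2-\nu^2)^{1/4}}$, the main term of part (1); the terms $\ell=1,\dots,K$ give the sums $x_0\sum_k(rx_0^3)^{-k-1/2}a_k(x_0)/k!$ of part (2), where the prefactor $\lambda^{-1/2}=(r^2-\nu^2)^{-1/4}\sim(rx_0)^{-1/2}$ has been rewritten through $x_0$, and $a_k,\tilde a_k$ are smooth and bounded with all their derivatives. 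Collecting $\tilde h$ from the $(K+1)$-st Fresnel remainder ($\les\lambda^{-1/2}(rx_0^3)^{-(K+1)}$), the bulk and endpoint terms, and $J_\nu^E$: on $[\nu+\nu^{1/3},2\nu]$ one has $x_0^2\sim(r-\nu)/r$, so $\lambda^{-1/2}(rx_0^3)^{-(K+1)}\sim r^{K/2+1/4}(r-\nu)^{-(3K/2+7/4)}$, and adding $O(r^{-1})$ yields the stated bound (the $K=0$ remainder being $\sim\nu^2(r^2-\nu^2)^{-7/4}$, which is part (1)); on $[2\nu,\infty)$ everything is $O(r^{-1})$, improving to $O(r^{-3/2})$ for $\nu\in\Z$ once the endpoint and $J_\nu^E$ terms drop out.

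The main obstacle is exactly this uniformity in $\nu$: making the effective large parameter $rx_0^3$ and controlling the remainder by the sharp power of $(r-\nu)$, valid all the way down to $r-\nu\sim\nu^{1/3}$, where the two critical points are only barely separated (their distance $2x_0$ being comparable to, but not much larger than, the Fresnel width $\lambda^{-1/2}$). This forces the localization scale around each critical point to be taken exactly $\sim x_0$, requires verifying that the Morse map is a genuine diffeomorphism with controlled derivatives on that scale, and needs careful bookkeeping of the transition between the two critical-point neighborhoods. By comparison, the endpoints $x=\pm\pi$ and the integer-$\nu$ refinement are routine, resting only on periodicity of the integrand and the vanishing of $\sin\nu\pi$.
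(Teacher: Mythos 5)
The paper does not prove this lemma itself --- it imports it verbatim as Lemma 2.5 of \cite{Guo} --- and your argument is the standard stationary-phase proof of that result: Schl\"afli's representation, the $O(r^{-1})$ bound on $J_\nu^E$ (vanishing for $\nu\in\Z$), localization at the critical points $\pm x_0$ at scale $x_0$, the Morse change of variables with derivative cost $\sim x_0^{-1}$ making $rx_0^3\gtrsim 1$ the effective expansion parameter, and the endpoint/periodicity discussion for the integer-$\nu$ improvement. The key computations you record ($r\phi(x_0)=\theta(r)+\pi/4$, $\lambda^{-1/2}(rx_0^3)^{-(K+1)}\sim r^{K/2+1/4}(r-\nu)^{-3K/2-7/4}$ on $[\nu+\nu^{1/3},2\nu]$) check out, so this is correct and follows essentially the same route as the cited source.
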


Fixing $\lambda$ with $2^{\frac{j+k}{3}}\ll \lambda\les 2^{\frac{3(j+k)}{5}}$, we decompose 
\begin{align}
T_{j,k}^\nu (h)(t,s)=\sum_{l=1}^3 T_{j,k}^{\nu,l} (h)(t,2^ks),
\end{align}
where
\begin{align*}
T_{j,k}^{\nu,l} (h)(t,s)=&\chi_{j+k}(s)\int e^{it\om(2^k\rho)}J_\nu(s\rho)\gamma_l(\frac{s\ro-\nu}{\lambda})\chi_0(\rho)h(\rho)d\rho\end{align*}
with $\gamma_1(x)=\eta(x)$, $\gamma_2(x)=(1-\eta(x))1_{x<0}$, and
$\gamma_3(x)=(1-\eta(x))1_{x>0}$.  By the same argument  as in the proof of Lemma \ref{lem:H1} we can get
if $j\geq -k$ and $2\leq q\leq r\leq \infty$ then
\begin{align}\label{eq:Tjkvl}
\norm{T_{j,k}^{\nu,l} (h)}_{L_t^qL_s^r}\les& 2^{-(j+k)(\frac{1}{2}-\frac{1}{q})}2^{-k\al/q}\norm{h}_2.
\end{align}

\begin{lem}\label{lem:Tjkd3}
Assume $k\in \Z$, $\om$ satisfies $\mathrm{H2(k),\, H3(k)}$, $j\geq -k$ and $R=2^{j+k}$. Then for $2\leq r\leq \infty$
\begin{align*}
\norm{T_{j,k}^{\nu,1}  (h)}_{L_t^2L_s^r}\les& 2^{-k\al/2}\lambda^{1/4}R^{-1/4}\norm{h}_2,\\
\norm{T_{j,k}^{\nu,2}  (h)}_{L_t^2L_s^r}\les& 2^{-k\al/2}(\lambda^{-1}R^{1/4}2^{k(\al-\be)/4})^{1-2/r}\norm{h}_2,\\
\norm{T_{j,k}^{\nu,3}  (h)}_{L_t^2L_s^r}\les& 2^{-k\al/2}[(R^{-1/4}2^{k(\al-\be)/4})^{1-2/r}+R^{1/8}\lambda^{-5/8}]\norm{h}_2.
\end{align*}
\end{lem}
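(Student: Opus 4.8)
The plan is to establish, for each $l\in\{1,2,3\}$, the two endpoint estimates $\|T_{j,k}^{\nu,l}h\|_{L_t^2L_s^2}$ and $\|T_{j,k}^{\nu,l}h\|_{L_t^2L_s^\infty}$, and then to fill in the remaining $r\in(2,\infty]$ by interpolation (for $l=1$ both endpoints will carry the same bound, giving an $r$-independent estimate). The $L_t^2L_s^2$ endpoint is the soft one: Plancherel in $t$, the substitution $\tau=\om(2^k\rho)$ with $\mathrm{H1(k)}$, and $\|J_\nu\|_{L^2_{u\sim R}}\les1$ from \eqref{eq:BessL2} give $\|T_{j,k}^{\nu,l}h\|_{L_t^2L_s^2}\les2^{-k\al/2}\|h\|_2$, exactly as in the proof of Lemma \ref{lem:H1} (this is also \eqref{eq:Tjkvl} with $q=r=2$). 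For $l=1$ one instead retains the sharper localization $\|J_\nu\|_{L^2(\{|u-\nu|\les\lambda\}\cap\{u\sim R\})}\les R^{-1/4}\lambda^{1/4}$, a consequence of \eqref{eq:unidecay} and the fact that $\lambda\gg R^{1/3}$ forces $\nu\sim R$ on the support of $\gamma_1$; this upgrades the $L_t^2L_s^2$ endpoint for $l=1$ to $\les2^{-k\al/2}R^{-1/4}\lambda^{1/4}\|h\|_2$. As noted in the excerpt we may assume $\nu\ge11$, the case $\nu\le10$ being identical to the radial argument.

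For $l=1$ (the Airy / turning zone $|s\rho-\nu|\les\lambda$) the Bessel function is non-oscillatory, $|J_\nu(s\rho)|\les R^{-1/3}$ there, and — since $\lambda\ll\nu^{2/3}\sim R^{2/3}$ — it is slowly varying in $s$, each $s$-derivative effectively gaining a factor $\lambda^{-1}$. Combining the localized $L^2$ endpoint above with a Bernstein/Sobolev embedding in $s$ on the interval of length $\sim\lambda$ supporting $\gamma_1$ yields $\|T_{j,k}^{\nu,1}h\|_{L_t^2L_s^\infty}\les2^{-k\al/2}R^{-1/4}\lambda^{1/4}\|h\|_2$, hence the claimed $r$-independent bound after interpolation. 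For $l=2$ (below the turning point, $s\rho\le\nu-\lambda$) the Bessel function is evanescent: Schl\"afli's representation \eqref{eq:Besselint} with a non-stationary-phase bound on $J_\nu^M$ gives $|J_\nu(s\rho)|\les(\nu-s\rho)^{-1}\les\lambda^{-1}$ on the support, the $J_\nu^E$ piece being $O(R^{-1})$, so there is no oscillation in $\rho$ and only the oscillation in $t$ remains. I would run the $TT^*$ argument in $t$ exactly as in the proof of Lemma \ref{lem:H2rad}, with the amplitude of the Bessel main term replaced by the evanescent bound: the reduced kernel is controlled via $K(t,x)=\int e^{it\om(2^k\rho)+ix\rho}\chi_0^2\,d\rho$ with $|K|\les|t|^{-1/2}2^{-k\be/2}$ (van der Corput with $\mathrm{H2(k)}$) and $|K|\les|t|^{-2}2^{-2k\al}$ (two integrations by parts with $\mathrm{H2(k)}$ and $\mathrm{H3(k)}$), producing $\|T_{j,k}^{\nu,2}h\|_{L_t^2L_s^\infty}\les2^{-k\al/2}\lambda^{-1}R^{1/4}2^{k(\al-\be)/4}\|h\|_2$; interpolation with the soft $L^2_s$ endpoint finishes this case.

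The \emph{main obstacle} is $l=3$ (above the turning point, $s\rho\ge\nu+\lambda$). Since $\lambda\gg R^{1/3}\sim\nu^{1/3}$ we are in the oscillatory regime $s\rho>\nu+\nu^{1/3}$, so Lemma \ref{lem:Bessel} applies; I would insert the expansion $J_\nu(s\rho)=c(s^2\rho^2-\nu^2)^{-1/4}\brk{e^{i\theta(s\rho)}+e^{-i\theta(s\rho)}}+h(\nu,s\rho)$ into $T_{j,k}^{\nu,3}$ and split into a main part and an error part. The main part is an oscillatory $\rho$-integral with amplitude $(s^2\rho^2-\nu^2)^{-1/4}\chi_0\gamma_3$ and phase $t\om(2^k\rho)\pm\theta(s\rho)$; I would estimate it by a $TT^*$/van der Corput analysis combining the two oscillations, using $\mathrm{H2(k)}$ to control $\om''$ and — crucially — $\mathrm{H3(k)}$ to handle the non-resonant pairing of $e^{\pm i\theta}$ against the opposite stationary point, which requires the extra $|t|^{-2}2^{-2k\al}$ decay just as in Lemma \ref{lem:H2rad}, all constants uniform in $\nu$; this contributes the term $2^{-k\al/2}(R^{-1/4}2^{k(\al-\be)/4})^{1-2/r}\|h\|_2$. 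The error part, by Plancherel in $t$ (with $\mathrm{H1(k)}$) and a Sobolev embedding in $s$, using the pointwise bounds on the remainder $h(\nu,\cdot)$ and its finer expansion from Lemma \ref{lem:Bessel}, is bounded by $2^{-k\al/2}R^{1/8}\lambda^{-5/8}\|h\|_2$ in every $L^r_s$, $2\le r\le\infty$; here the hypothesis $\lambda\les2^{3(j+k)/5}$ is precisely what makes the remainder summable. The delicate points are the uniformity in $\nu$ — hence the reliance on the quantitative asymptotics of Lemma \ref{lem:Bessel} and on $\nu\ge11$ — and the optimization over $\lambda$: the lower endpoint $\lambda\gg2^{(j+k)/3}$ places us in the Bessel asymptotic regime, while the upper endpoint $\lambda\les2^{3(j+k)/5}$ keeps the Bessel remainder, and hence the error part, under control.
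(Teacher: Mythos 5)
Your overall architecture (two endpoints plus interpolation, $TT^*$ in $t$ for the oscillatory pieces) and your treatment of $T_{j,k}^{\nu,1}$ match the paper, but the proposal has a genuine gap in the $l=3$ case, at exactly the point where the paper needs a new idea. You propose to bound the error part $E_{j,k}^{\nu,3}$ (the contribution of the remainder $h(\nu,s\rho)$ from Lemma \ref{lem:Bessel}) by Plancherel in $t$ followed by Sobolev embedding in $s$. Sobolev embedding in $s$ requires an $L^2_s$ bound on the $s$-derivative of the integrand, hence on $\partial_r h(\nu,r)$, and Lemma \ref{lem:Bessel} controls only $h(\nu,r)$ pointwise, not its derivative; the paper flags this explicitly ("Since we do not have estimate on $\partial_r h$, we can not get $L^2_tL^\infty_r$ estimate by Sobolev embedding as before") and replaces it by a different argument. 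Namely, one runs $TT^*$ for $E_{j,k}^{\nu,3}$ and writes its kernel as $K_E=K_T-K_M$, where $K_T$ is built from the full Bessel functions and $K_M$ from the main asymptotic term: for $2^{k\al}|t|\gg BR$ one integrates by parts twice in $\rho$ in $K_T$, using only the $L^2_{r\sim R}$ bounds on $J_\nu$, $J_\nu'$, $J_\nu''$ (via \eqref{eq:BessL2} and $\partial_zJ_\nu=\nu z^{-1}J_\nu-J_{\nu+1}$), while for $2^{k\al}|t|\les BR$ one uses the crude $L^2_\rho$ bound on $h(\nu,\cdot)$; optimizing $B=R^{-1/4}\lambda^{5/4}$ is precisely what produces the exponent $R^{1/8}\lambda^{-5/8}$. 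Without some such device your third estimate is not reached.

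A second, smaller issue concerns $T_{j,k}^{\nu,2}$: replacing the amplitude by the pointwise evanescent bound $|J_\nu^M(s\rho)|\les\lambda^{-1}$ is not enough to run the $TT^*$ kernel estimates. The van der Corput step and, more seriously, the double integration by parts for $2^{k\al}|t|\gg R$ require quantitative control of $\partial_\rho$ and $\partial_\rho^2$ of the amplitude (or at least its variation), and the crude non-stationary-phase bounds on $\partial_rJ_\nu^M$ below the turning point lose a factor of order $\lambda$ per derivative relative to what is needed, which at $\lambda=R^{1/2}$ destroys the final summation over $j$. The paper secures the right derivative bounds by first integrating by parts in the Schl\"afli angular variable in \eqref{eq:Besselint}, which replaces $J_\nu^M$ by the explicit algebraic amplitudes $(s\rho\cos\theta-\nu)^{-1}$ and $(s\rho\cos\theta-\nu)^{-2}$, each $\rho$-derivative of which costs only $R\lambda^{-1}$ and which are piecewise monotone; you would need to reproduce this (or establish comparable monotonicity and derivative bounds for $J_\nu^M$ in the evanescent regime). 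Finally, for the main part of $T_{j,k}^{\nu,3}$ you will also need the dyadic decomposition in $|s\rho-\nu|\sim 2^l$, $\lambda\le 2^l\les R$, since $(s^2\rho^2-\nu^2)^{-1/4}$ is not of a single size on the support of $\gamma_3$, and the role of $\mathrm{H3(k)}$ there is to guarantee that $\partial_\rho\phi_2$ and $\partial^2_\rho\phi_2$ cannot both be small (the two contributions to $\partial_\rho^2\phi_2$ have the same sign), rather than to handle a cross term; these last points are organizational rather than fatal.
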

\begin{proof}
For $T_{j,k}^{\nu,1}, T_{j,k}^{\nu,2}$, we follow closely the ideas in \cite{GLNW, Guo}. For $T_{j,k}^{\nu,3}$ we use a new argument to handle the error term. By interpolation, we only need to show the estimates for
$r=2,\infty$. 

{\bf Step 1:} estimate of $T_{j,k}^{\nu,1}$.

As the proof of Lemma \ref{lem:H1}, by Sobolev embedding and Plancherel's equality in $t$ we obtain
\begin{align*}
\norm{T_{j,k}^{\nu,1} (h)}_{L_t^2L_s^r}
\les&2^{-k\al/2}
(\norm{J_\nu(s)\ga_1(\frac{s-\nu}{\lambda})}_{L_{s\sim R}^2}+\norm{J'_\nu(s)\ga_1(\frac{s-\nu}{\lambda})}_{L_{s\sim R}^2})\norm{h}_2\\
\les& \lambda^{1/4}R^{-1/4}2^{-k\al/2}\norm{h}_2.
\end{align*}
So we get the desired estimate.

{\bf Step 2:} estimate of $T_{j,k}^{\nu,2}$.

The estimate for $r=2$ is given by \eqref{eq:Tjkvl} and hence we only need to consider the case $r=\infty$.
We have $\nu>s\rho+\lambda$ in the
support of $\gamma_2(\frac{s\rho-\nu}{\lambda})$. Thus we use the
formula \eqref{eq:Besselint}. Without loss of generality, we assume
$J_\nu^M=\frac{1}{\pi}\int_0^{\pi}
 e^{i(r\rho\sin \theta-\nu
\theta)}d\theta$ (its conjugate part can be handled in the same
way), and decompose
\[T_{j,k}^{\nu,2}(h):=M_{j,k}^{\nu,2}(h)(s)+E_{j,k}^{\nu,2}(h)(s)\]
where
\begin{align*}
M_{j,k}^{\nu,2} (h)=&\chi_{j+k}(s)\int
e^{-it\om(2^k\rho)}\bigg(\int_0^{\pi/2}
 e^{i(s\rho\sin \theta-\nu
\theta)}d\theta\bigg) \gamma_2(\frac{s\rho-\nu}{\lambda})\chi_0(\rho)h(\rho)d\rho,\\
E_{j,k}^{\nu,2} (h)=&\chi_{j+k}(s)\int
e^{-it\om(2^k\rho)}\bigg(J_\nu^E(s\rho)+\int_{\pi/2}^{\pi}
 e^{i(s\rho\sin \theta-\nu
\theta)}d\theta\bigg)\gamma_2(\frac{s\rho-\nu}{\lambda})\chi_0(\rho)h(\rho)d\rho.
\end{align*}
By the obvious decay estimate
\begin{align*}
&|J_\nu^E(s\rho)|+|\partial_s[J_\nu^E(s\rho)]|+\left|\int_{\pi/2}^{\pi}
 e^{i(s\rho\sin \theta-\nu
\theta)}d\theta \right|+\left|\partial_s\left[\int_{\pi/2}^{\pi}
 e^{i(s\rho\sin \theta-\nu
\theta)}d\theta\right] \right|\\
\les& (\nu+s)^{-1}, \quad \mbox{ for } s\ges 1, \ro\sim 1,
\end{align*}
and by the similar proof of Lemma \ref{lem:H1} we can get that 
for $2\leq r\leq \infty$
\begin{align}
\norm{E_{j,k}^{\nu,2}(h)}_{L_t^2 L_{s}^r}\les 2^{-k\al/2}R^{-1/2}\norm{h}_2,
\end{align}
which is acceptable. It remains to prove
\begin{align}
\norm{M_{j,k}^{\nu,2} (h)}_{L_t^2L_{s}^\infty}\les 2^{-k(\al+\be)/4}\lambda^{-1}R^{1/4}
\norm{h}_{L^2}.
\end{align}
Denote
$\phi(s,\rho,\theta)=s\rho\sin \theta-\nu \theta$. Integrating by
part, we can decompose further
\begin{align*}
M_{j,k}^{\nu,2} (h)=&\chi_{j+k}(s)\int\bigg(\frac
{e^{i\phi(s,\rho,\theta)}}{i(s\rho
\cos\theta-\nu)}\bigg|_{\theta=\pi/2}-\frac
{e^{i\phi(s,\rho,\theta)}}{i(s\rho
\cos\theta-\nu)}\bigg|_{\theta=0}\\
&-\int_{0}^{\pi/2}
 e^{i(s\rho\sin \theta-\nu
\theta)}\frac{s\rho \sin\theta}{(s\rho \cos
\theta-\nu)^2}d\theta\bigg)\gamma_2(\frac{s\rho-\nu}{\lambda})e^{-it\om(2^k\rho)}\chi_0(\rho)h(\rho)d\rho\\
:=&M_{j,k,1}^{\nu,2}(h)-M_{j,k,2}^{\nu,2}(h)-M_{j,k,3}^{\nu,2}(h).
\end{align*}
We have
\begin{align*}
M_{j,k,1}^{\nu,2}(h)=&\chi_{j+k}(s)\int\frac {e^{i(s\rho-\nu
\pi/2)}}{-i\nu}\gamma_2(\frac{s\rho-\nu}{\lambda})e^{-it\om(2^k\rho)}\chi_0(\rho)h(\rho)d\rho,\\
M_{j,k,2}^{\nu,2}(h)=&\chi_{j+k}(s)\int\frac
{1}{i(s\rho-\nu)}\gamma_2(\frac{s\rho-\nu}{\lambda})e^{-it\om(2^k\rho)}\chi_0(\rho)h(\rho)d\rho.
\end{align*}
For $M_{j,k,2}^{\nu,2}$,  by $TT^*$ argument, it is equivalent to
show
\begin{align}
\norm{M_{j,k,2}^{\nu,2}(M_{j,k,2}^{\nu,2})^*f}_{L_t^2L_{r}^\infty}\les 2^{-k(\al+\be)/2}\lambda^{-2}R^{1/2}\norm{f}_{L_t^2L_{r}^1}
\end{align}
where
\[M_{j,k,2}^{\nu,2}(M_{j,k,2}^{\nu,2})^*f=\int K_2(t-t',r,r')f(t',r')dt'dr'\]
with the kernel
\[K_2(t,r,r')=\int e^{-it\om(2^k\rho)}\frac
{\chi_0\big(\frac
rR\big)}{r\rho-\nu}\gamma_2(\frac{r\rho-\nu}{\lambda})\frac
{\chi_0\big(\frac
{r'}R\big)}{r'\rho-\nu}\gamma_2(\frac{r'\rho-\nu}{\lambda})\chi_0^2(\rho)d\rho.\]
It suffices to prove
\begin{align}\label{eq:K2est}
\norm{K_2}_{L_t^1L^\infty_{r,r'}}\les 2^{-k(\al+\be)/2}\lambda^{-2}R^{1/2}.
\end{align}
Denote $F_2(\ro)=\frac {\chi_0\big(\frac
rR\big)}{i(r\rho-\nu)}\gamma_2(\frac{r\rho-\nu}{\lambda})\frac
{\chi_0\big(\frac
{r'}R\big)}{i(r'\rho-\nu)}\gamma_2(\frac{r'\rho-\nu}{\lambda})\chi_0^2(\rho)$. Then $F_2$ and $F_2'$ are both piecewise monotone on $[0,\infty)$, and we have $|F_2|\les \lambda^{-2}, |\p_\rho F_2|\les \lambda^{-3}R$.
By Lemma \ref{lem:staph} we get
\begin{align*}
|K_2(t,r,r')|\les& 2^{-k\be/2}|t|^{-1/2} \int |\partial_\rho
F_2|d\rho\les 2^{-k\be/2}\lambda^{-2}|t|^{-1/2}.
\end{align*}
On the other hand, using  twice integration by part, we
get
\begin{align*}
|K_2|\les& \int
|t|^{-2}\aabs{\partial_\rho\big[2^{-k}\om'(2^k\ro)^{-1}\partial_\rho[2^{-k}\om'(2^k\ro)^{-1}F_2]\big]}d\rho\\
\les& \int
|t|^{-2}2^{-2k}\big(|\om'(2^k\ro)|^{-2}|\p_\rho^2F_2|+\aabs{\p_\rho[\om'(2^k\ro)^{-1}]\om'(2^k\ro)^{-1}\p_\rho F_2}\\
&\qquad\qquad\qquad\qquad+\aabs{\p_\rho^2[\om'(2^k\ro)^{-1}]\om'(2^k\ro)^{-1} F_2}\big)   d\rho\\
\les&
2^{-2k\al}|t|^{-2}R\lambda^{-3}.
\end{align*}
Then eventually we have
\[|K_2|\les 2^{-k\be/2} \lambda^{-2}|t|^{-1/2}1_{|t|\les 2^{-k\al}R}+|t|^{-2}\lambda^{-3}R2^{-2k\al}1_{|t|\gg 2^{-k\al}R}\]
which implies
\[\norm{K_2}_{L_t^1L^\infty_{r,r'}}\les 2^{-k(\al+\be)/2}\lambda^{-2}R^{1/2}.\]
For $M_{j,k,1}^{\nu,2}$, we have the same bound as $M_{j,k,2}^{\nu,2}$ by the similar and easier proof
since $\nu\ges R$. 

Now we consider $M_{j,k,3}^{\nu,2}$. Similarly, the kernel of
$M_{j,k,3}^{\nu,2}(M_{j,k,3}^{\nu,2})^*$ is
\begin{align*}
K_3(t-t',r,r')=&\int_{0}^{\pi/2}\int_{0}^{\pi/2}\int
e^{-i(t-t')\om(2^k\rho)}
 e^{i(r\rho\sin \theta-\nu
\theta)}\frac{\chi_0\big(\frac rR\big)r\rho \sin\theta}{(r\rho \cos
\theta-\nu)^2}
\gamma_2(\frac{r\rho-\nu}{\lambda})\\
&\times
 e^{-i(r'\rho\sin \theta'-\nu
\theta')}\frac{\chi_0\big(\frac {r'}R\big)r'\rho
\sin\theta'}{(r'\rho \cos
\theta'-\nu)^2}\gamma_2(\frac{r'\rho-\nu}{\lambda})\chi_0^2(\rho)d\rho
d\theta d\theta'.
\end{align*}
It suffices to prove
\[\norm{K_3}_{L_t^1L^\infty_{r,r'}}\les 2^{-k(\al+\be)/2}\lambda^{-2}R^{1/2}.\]
Denote
\[ 
\tau(\theta,\rho):=r\ro\cos\te-\nu, \pq \tau'(\theta',\rho):=r'\ro\cos\te'-\nu.
\]
Let $F_3(\theta,\theta',\rho)=\chi_0\big(\frac rR\big)\chi_0\big(\frac
{r'}R\big)\gamma_2(\frac{r\rho-\nu}{\lambda})\gamma_2(\frac{r'\rho-\nu}{\lambda})\chi_0^2(\rho)$.
By these notations we get
\begin{align*}
K_3(t,r,r')=&\int_{0}^{\pi/2}\int_{0}^{\pi/2}\int
e^{-i[t\om(2^k\rho)+\tau(\te,\rho)-\tau'(\te',\rho)]}
 F_3\p_\theta [\tau^{-1}]\p_{\theta'} [{\tau'}^{-1}]d\rho
d\theta d\theta'.
\end{align*}
It's easy to see that $F_3$ and $\p_\rho F_3$ are both piece-wise monotone,  $\p_\ro\p_\te (\tau^{-1}), \p_{\te}
({\tau}^{-1})$ (similarly for $\tau'^{-1}$) do not change the sign. Since the phase $|\p_\ro^2[t\om(2^k\rho)+\tau(\te,\rho)-\tau'(\te',\rho)]|\ges |t|2^{k\beta}$, we get by Lemma \ref{lem:staph} that
\begin{align*}
|K_3|\les& 2^{-k\be/2}|t|^{-1/2}\int_{0}^{\pi/2}\int_{0}^{\pi/2}\int
|\p_\ro (F_3\p_\theta [\tau^{-1}]\p_{\theta'} [{\tau'}^{-1}])|d\rho
d\theta d\theta'\\
\les & 2^{-k\be/2}|t|^{-1/2}\sup_{\ro,\te,\te'}|F_3\tau^{-1}\tau'^{-1}|\les 2^{-k\be/2}|t|^{-1/2}\lambda^{-2},
\end{align*}
On the other hand, for $2^{k\al}|t|\gg
R$, denoting $\phi_1=-2^kt\om'(2^k\rho)+r\sin\theta-r'\sin \theta'$, we get $|\phi_1|\ges |t|2^{k\alpha}$ and thus get
\begin{align*}
|K_3|\les &\int_{0}^{\pi/2}\int_{0}^{\pi/2}\int
|\partial_\rho\big(\phi_1^{-1}\partial_\rho[\phi_1^{-1}F_3\p_\theta [h^{-1}]\p_{\theta'} [{h'}^{-1}]]\big)|d\rho
d\theta d\theta'\\
\les& 2^{-2k\al}|t|^{-2}\lambda^{-3}R.
\end{align*}
Then we get
\[|K_3|\leq 2^{-k\be/2} \lambda^{-2}|t|^{-1/2}1_{|t|\les 2^{-k\al}R}+|t|^{-2}\lambda^{-3}R2^{-2k\al}1_{|t|\gg 2^{-k\al}R}\]
which implies that $\norm{K_3}_{L_t^1L_{r,r'}^\infty}\les
\lambda^{-2}R^{1/2}2^{-k(\al+\be)/2}$ as desired.

{\bf Step 3:} estimate of $T_{j,k}^{\nu,3}$.

The estimate for $r=2$ is given by \eqref{eq:Tjkvl} and hence we only need to consider the case $r=\infty$.
By the support of $\gamma_3$, we have
$r\rho>\nu+\lambda>\nu+\nu^{1/3}$ in the support of
$\gamma_3(\frac{r\rho-\nu}{\lambda})$. Thus we use the Lemma
\ref{lem:Bessel}, and decompose
\[T_{j,k}^{\nu,3}(h):=M_{j,k}^{\nu,3}(h)+E_{j,k}^{\nu,3}(h)\]
where
\begin{align*}
M_{j,k}^{\nu,3}(h)=&\chi_0\big(\frac rR\big)\int
e^{-it\om(2^k\rho)}\frac{e^{i\theta(r\rho)}+e^{-i\theta(r\rho)}}
{2 \sqrt{2\pi}(r^2\rho^2-\nu^2)^{1/4}}\gamma_3(\frac{r\rho-\nu}{\lambda})\chi_0(\rho)h(\rho)d\rho,\\
E_{j,k}^{\nu,3}(h)=&\chi_0\big(\frac rR\big)\int
e^{-it\om(2^k\rho)}h(\nu,r\rho)\gamma_3(\frac{r\rho-\nu}{\lambda})\chi_0(\rho)h(\rho)d\rho,
\end{align*}
with $\theta(r),h(\nu,r)$ given in Lemma \ref{lem:Bessel}.

First, we consider $M_{j,k}^{\nu,3}$. We may assume
\[M_{j,k}^{\nu,3} (h)=\chi_0\big(\frac rR\big)\int e^{-it\om(2^k\rho)}\frac{e^{i\theta(r\rho)}}
{(r^2\rho^2-\nu^2)^{1/4}}\gamma_3(\frac{r\rho-\nu}{\lambda})\chi_0(\rho)h(\rho)d\rho,\]
since the other term is similar. 
Let $\gamma(x)=\chi_0(x)\cdot 1_{x>0}$. We decompose further
$M_{j,k}^{\nu,3}(h)=\sum_{l: \lambda\leq 2^l \les
R}M_{j,k,l}^{\nu,3}(h)$ where
\begin{align}\label{eq:MR3k}
M_{j,k,l}^{\nu,3}(h)=\chi_0\big(\frac rR\big)\int
e^{-it\om(2^k\rho)}\frac{e^{i\theta(r\rho)}}
{(r^2\rho^2-\nu^2)^{1/4}}\gamma(\frac{r\rho-\nu}{2^l})\chi_0(\rho)h(\rho)d\rho.
\end{align}
It suffices to prove
\begin{align*}
\norm{M_{j,k,l}^{\nu,3}(h)}_{L_t^2L_r^\infty}\les 2^{-k(\al+\beta)/4}
(2^{l/8}R^{-3/8}+2^{-l/8}R^{-1/4})\norm{h}_2.
\end{align*}
By $TT^*$ argument, it suffices to prove
\begin{align}
\norm{M_{j,k,l}^{\nu,3}(M_{j,k,l}^{\nu,3})^*(f)}_{L_t^2L_r^\infty}\les
2^{-k(\al+\beta)/2}
(2^{l/4}R^{-3/4}+2^{-l/4}R^{-1/2})\norm{f}_{L_t^2L_r^1}.
\end{align}
The kernel for $M_{j,k,l}^{\nu,3}(M_{j,k,l}^{\nu,3})^*$ is
\[K(t-t',r,r')=\int e^{-i[(t-t')\om(2^k\rho)-\theta(r\rho)+\theta(r'\rho)]}\frac{\chi_0\big(\frac rR\big)\gamma(\frac{r\rho-\nu}{2^l})}
{(r^2\rho^2-\nu^2)^{1/4}}\frac{\chi_0\big(\frac
{r'}R\big)\gamma(\frac{r'\rho-\nu}{2^l})}
{(r'^2\rho^2-\nu^2)^{1/4}}\chi_0^2(\rho)d\rho.\]
It suffices to prove
\[\norm{K}_{L_t^1L^\infty_{r,r'}}\les 
2^{-k(\al+\beta)/2}
(2^{l/4}R^{-3/4}+2^{-l/4}R^{-1/2}).\]
Recall
$\theta(r)=(r^2-\nu^2)^{1/2}-\nu\arccos\frac{\nu}{r}-\frac{\pi}{4}$,
then direct computation shows
\begin{align*}
\theta'(r)=&(r^2-\nu^2)^{1/2}r^{-1},\\
\theta''(r)=&(r^2-\nu^2)^{-1/2}\nu^2r^{-2},\\
\theta'''(r)=&(r^2-\nu^2)^{-3/2}\frac{\nu^2}{r}(-3+\frac{2\nu^2}{r^2}).
\end{align*}
Denoting $G=\frac{\chi_0\big(\frac
rR\big)\gamma(\frac{r\rho-\nu}{2^{l}})}
{(r^2\rho^2-\nu^2)^{1/4}}\frac{\chi_0\big(\frac
{r'}R\big)\gamma(\frac{r'\rho-\nu}{2^l})}
{(r'^2\rho^2-\nu^2)^{1/4}}\chi_0^2(\rho)$,
$\phi_2=t\om(2^k\rho)-\theta(r\rho)+\theta(r'\rho)$. Then
\begin{align*}
\partial_\rho(\phi_2)=&2^kt\om'(2^k\ro)+
\frac{\rho(r'^2-r^2)}{\sqrt{r'^2\rho^2-\nu^2}+\sqrt{r^2\rho^2-\nu^2}}\\
\partial^2_\rho(\phi_2)=&2^{2k}t\om''(2^k\ro)-
\frac{(r'^2-r^2)}{\sqrt{r'^2\rho^2-\nu^2}+\sqrt{r^2\rho^2-\nu^2}}
\frac{\nu^2}{\sqrt{r'^2\rho^2-\nu^2}\sqrt{r^2\rho^2-\nu^2}}\\
\partial^3_\rho(\phi_2)=&2^{3k}t\om'''(2^k\ro)-\theta'''(r\rho)r^3+\theta'''(r'\rho)r'^3
\end{align*}
The key observation here is that $\partial_\rho(\phi_2)$ and $\partial^2_\rho(\phi_2)$ can not be both small. Hence we can apply the stationary phase method. 

{\it Case 1. } $R\sim \nu$

If $|\partial_\rho(\phi_2)|\ll
2^{k\al}|t|$, then $
\frac{|r'^2-r^2|}{\sqrt{r'^2\rho^2-\nu^2}+\sqrt{r^2\rho^2-\nu^2}}\ges 2^{k\al}|t|$, and $|\partial^2_\rho(\phi_2)|\ges 2^{\al k}|t|R2^{-l}$ on the support
of $G$ by the assumption $\text{H2(k) and H3(k)}$. Note that on the support of $G$, one has
\[\frac{|r'^2-r^2|}{\sqrt{r'^2\rho^2-\nu^2}+\sqrt{r^2\rho^2-\nu^2}}\les \sqrt{r'^2\rho^2-\nu^2}+\sqrt{r^2\rho^2-\nu^2}\les
2^{l/2}R^{1/2}.\] 
If $2^{k\al}|t|\les 2^{l/2}R^{1/2}$, we divide $K$
\[K=\int e^{-i\phi_2}G \eta_0(\frac{100\partial_\rho(\phi_2)}{2^{k\al}t})d\rho+
\int e^{-i\phi_2}G
[1-\eta_0(\frac{100\partial_\rho(\phi_2)}{2^{k\al}t})]d\rho:=I_1+I_2.\] By
Lemma \ref{lem:staph}, we obtain
\begin{align*}
|I_1|\les& 2^{-k\al/2}|t|^{-1/2}R^{-1/2}2^{l/2}\bigg(\int \abs{\partial_\rho [G
\eta_0(\frac{100\partial_\rho(\phi_2)}{t2^{k\al}})]}d\rho\bigg)\\
\les&2^{-k\al/2}|t|^{-1/2}R^{-1/2}2^{l/2}\bigg(\sup_{\rho}|G|\int |\p_\ro \eta_0(\frac{100\partial_\rho(\phi_2)}{t2^{k\al}})|d\ro+\int |\partial_\rho G|d\rho \bigg)\\
\les& 2^{-k\al/2}|t|^{-1/2}R^{-1/2}2^{l/2}2^{-l/2}R^{-1/2}\\
\les& 2^{-k\al/2}|t|^{-1/2}R^{-1}
\end{align*}
where we used the fact that $\eta_0', \p_\ro G$ change sign for finite times. 
For $I_2$, without loss of generality, we assume $r^2-r'^2>0$. Then
integrating by part, we get
\begin{align*}
|I_2|\les& \int \aabs{\partial_\rho \bigg((\partial_\rho\phi_2)^{-1}G[1-\eta_0(\frac{100\partial_\rho(\phi_2)}{t2^{k\al}})]\bigg)}d\rho\\
\les&2^{-k\al}|t|^{-1}2^{-l/2}R^{-1/2}. 
\end{align*}
Interpolating with the trivial estimate $|I_2|\les 2^{-l/2}R^{-1/2}$, we get
\[|I_2|\les \min(2^{-k\al}|t|^{-1},1)2^{-l/2}R^{-1/2}.\]

If $2^{k\al}|t|\gg 2^{l/2}R^{1/2}$, we have $|\partial_\rho(\phi_2)|\gg
2^{k\al}|t|$. Thus integrating by part, we get
\begin{align*}
|K|\les& \int \aabs{\partial_\rho \big[(\partial_\rho\phi_2)^{-1}\partial_\rho \big((\partial_\rho\phi_2)^{-1}G\big)\big]}d\rho\\
\les& \int
\aabs{(\partial_\rho\phi_2)^{-3}\partial^3_\rho\phi_2G}d\rho+\int
\aabs{(\partial_\rho\phi_2)^{-2}\partial_\rho^2G}d\rho\\
&+\int
\aabs{(\partial_\rho\phi_2)^{-3}\partial^2_\rho\phi_2\partial_\rho
G}d\rho+\int
\aabs{(\partial_\rho\phi_2)^{-4}(\partial^2_\rho\phi_2)^2 G}d\rho\\
:=&II_1+II_2+II_3+II_4.
\end{align*}
As for $I_2$, we can obtain
\[II_2+II_3+II_4\les 2^{-2k\al}|t|^{-2}2^{-l/2}R^{-1/2}R^22^{-2l}\les 2^{-2k\al}|t|^{-2}2^{-5l/2}R^{3/2}.\]
For $II_1$, we have
\begin{align*}
II_1\les& 2^{-3k\al}|t|^{-3}\lambda^{-1/2}R^{-1/2}\int
|-\theta'''(r\rho)r^3-\theta'''(r'\rho)r'^3|
\gamma(\frac{r\rho-\nu}{2^l})\gamma(\frac{r'\rho-\nu}{2^l})d\rho\\
&+2^{-2k\al}|t|^{-2}2^{-l/2}R^{-1/2}\int
\frac{|2^{3k}t\om'''(2^k\ro)|}{|2^kt\om'(2^k\ro)|}
\gamma(\frac{r\rho-\nu}{2^l})\gamma(\frac{r'\rho-\nu}{2^l})d\rho \\
\les&2^{-3k\al} |t|^{-3}2^{-l/2}R^{-1/2}
\sup_{\rho}\theta''(r\rho)r^2\\
&+2^{-2k\al}|t|^{-2}2^{-l/2}R^{-1/2}\int
\aabs{\p_\ro\brk{\frac{2^{2k}t\om''(2^k\ro)}{2^kt\om'(2^k\ro)}}}+\aabs{\brk{\frac{2^{2k}t\om''(2^k\ro)}{2^kt\om'(2^k\ro)}}^2}d\rho \\\\
\les& 2^{-3k\al} |t|^{-3}2^{-l}R+2^{-2k\al}|t|^{-2}2^{-l/2}R^{-1/2}.
\end{align*}
Thus, eventually we get
\begin{align*}
|K|\les& (2^{-k\al/2}|t|^{-1/2}R^{-1}+\min(2^{-k\al}|t|^{-1},1)2^{-l/2}R^{-1/2})1_{2^{k\al}|t|\les R^{1/2}2^{l/2}}\\
&+(2^{-2k\al}|t|^{-2}2^{-5l/2}R^{3/2}+2^{-3k\al} |t|^{-3}2^{-l}R)1_{2^{k\al}|t|\gg R^{1/2}2^{l/2}}
\end{align*}
which implies $\norm{K}_{L_t^1L_{r,r'}^\infty}\les 2^{-k\al}(2^{l/4}R^{-3/4}+2^{-l/4}R^{-1/2})$
as desired since $2^l\ges R^{1/3}$.

{\it Case 2. } $R\gg \nu$

In this case we may assume $2^{l}\sim R$ since $|r\ro-\nu|\sim R$. We observe that if $|\partial_\rho(\phi_2)|\ll
2^{k\al}|t|$, then $|\partial^2_\rho(\phi_2)|\ges 2^{\beta k}|t|$ on the support
of $G$ by the assumption $\text{H2(k) and H3(k)}$. Note that $|G|\les R^{-1}$.
Then as in Case 1, we can get
\begin{align*}
|I_1|\les& 2^{-k\beta/2}|t|^{-1/2}\bigg(\int |\partial_\rho [G
\eta_0(\frac{100\partial_\rho(\phi_2)}{t})]|d\rho\bigg)\\
\les& 2^{-k\beta/2}|t|^{-1/2}R^{-1}.
\end{align*}
The rest estimates are the same as Case 1. So we get
\begin{align*}
|K|\les& (2^{-k\beta/2}|t|^{-1/2}R^{-1}+\min(2^{-k\al}|t|^{-1},1)2^{-l/2}R^{-1/2})1_{2^{k\al}|t|\les R^{1/2}2^{l/2}}\\
&+(2^{-2k\al}|t|^{-2}2^{-5l/2}R^{3/2}+2^{-3k\al} |t|^{-3}2^{-l}R)1_{2^{k\al}|t|\gg R^{1/2}2^{l/2}}
\end{align*}
which implies $\norm{K}_{L_t^1L_{r,r'}^\infty}\les 2^{-k(\al+\beta)/2}R^{-1/2}$
as desired.

It remains to bound $E_{j,k}^{\nu,3}$. First, using the decay estimate of $h(\nu,r)$, we get
\begin{align}\label{eq:ER3L2}
\norm{E_{j,k}^{\nu,3}(f)}_{L_t^2L_r^2}\les
2^{-k\al/2}(\lambda^{-5/4}R^{1/4}+R^{-1/2})\norm{f}_{L^2}.
\end{align}
Since we do not have estimate on $\p_r h$, we can not get $L_t^2L_r^\infty$ estimate by Sobolev embedding as before. We need a different argument. The argument used here is also different from the previous works \cite{GLNW,Guo}.
We claim that 
\begin{align}\label{eq:ER3infty}
\norm{E_{j,k}^{\nu,3}(f)}_{L_t^2L_r^\infty}\les
2^{-k\al/2}R^{1/8}\lambda^{-5/8}\norm{f}_{L^2}.
\end{align}
By $TT^*$ argument \eqref{eq:ER3infty} is equivalent to 
\begin{align}
\norm{E_{j,k}^{\nu,3}(E_{j,k}^{\nu,3})^*(f)}_{L_t^2L_r^\infty}\les
2^{-k\al}R^{1/4}\lambda^{-5/4}\norm{f}_{L_t^2L_r^1}.
\end{align}
The kernel for $E_{j,k}^{\nu,3}(E_{j,k}^{\nu,3})^*$ is
\begin{align*}
K_E(t-t',r,r')=&\int e^{-i[(t-t')\om(2^k\rho)]}h(\nu,r\ro)h(\nu,r'\ro)\chi_0\big(\frac rR\big)\\
&\qquad \cdot\gamma_3(\frac{r\rho-\nu}{\lambda})
{\chi_0\big(\frac
{r'}R\big)\gamma_3(\frac{r'\rho-\nu}{\lambda})}
\chi_0^2(\rho)d\rho.
\end{align*}
It suffices to prove 
\[\norm{K_E}_{L_t^1L^\infty_{r,r'}}\les 2^{-k\al}R^{1/4}\lambda^{-5/4}.\]

By the decay estimate of $h$ given in Lemma \ref{lem:Bessel}, we have the trivial estimate
\begin{align*}
|K_E|\les& \norm{h(\nu,r\ro)\chi_0(\ro)\gamma_3(\frac{r\rho-\nu}{\lambda})}_{L_\ro^2}\cdot\norm{h(\nu,r'\ro)\chi_0(\ro)\gamma_3(\frac{r'\rho-\nu}{\lambda})}_{L_\ro^2}\\
\les& R^{-1/2}\lambda^{-5/2}+R^{-2}.
\end{align*}
On the other hand, we have
\[K_E=K_T-K_M\]
where $K_M=\sum_l K$ is the kernel for $M_{j,k}^{\nu,3}(M_{j,k}^{\nu,3})^*$, and
\begin{align*}
K_T=&\int e^{-i[t\om(2^k\rho)]}J_\nu(r\ro)J_\nu(r'\ro)\chi_0\big(\frac rR\big)\\
&\qquad \cdot\gamma_3(\frac{r\rho-\nu}{\lambda})
{\chi_0\big(\frac
{r'}R\big)\gamma_3(\frac{r'\rho-\nu}{\lambda})}
\chi_0^2(\rho)d\rho.
\end{align*}
If $2^{\al k}|t|\gg R$, by the estimates for $K$ we have
\[|K_M|\les 2^{-2k\al}|t|^{-2}\lambda^{-5/2}R^{3/2}.\]
Now we estimate $K_T$ in the range $2^{\al k}|t|\gg R$.  We will use \eqref{eq:BessL2} and the property of Bessel function
\[\p_z J_\nu(z)=\nu z^{-1}J_\nu-J_{\nu+1}.\]
Denote $G_E=J_\nu(r\ro)J_\nu(r'\ro)\chi_0\big(\frac rR\big)\gamma_3(\frac{r\rho-\nu}{\lambda})
{\chi_0\big(\frac
{r'}R\big)\gamma_3(\frac{r'\rho-\nu}{\lambda})}
\chi_0^2(\rho)$. We have $|J_\nu'|\les R^{-1/4}\lambda^{-1/4}$ in the support of $G_E$.
Then for $2^{\al k}|t|\gg R$ we have
\begin{align*}
|K_T|\les& \int \aabs{\p_\ro\bigg([2^kt\om'(2^k\ro)]^{-1} \p_\ro\big([2^kt\om'(2^k\ro)]^{-1}G_E\big)\bigg)}d\ro\\
\les&  2^{-2k\al}|t|^{-2}R^2\norm{J''_\nu(r\ro)\chi_0(\ro)}_{L_\ro^2}\cdot\norm{J_\nu(r'\ro)\chi_0(\ro)}_{L_\ro^2}\\
\les& 2^{-2k\al}|t|^{-2}R\cdot\norm{J''_\nu}_{L_{r\sim R}^2}\norm{J_\nu}_{L_{r\sim R}^2}\les 2^{-2k\al}|t|^{-2}R.
\end{align*}
The worst bound in the above integral is when two derivatives fall on the Bessel function. Thus eventually we get 
\begin{align*}
|K_E|\les (R^{-1/2}\lambda^{-5/2}+R^{-2})1_{2^{k\al}|t|\les BR}+2^{-2k\al}|t|^{-2}R\cdot 1_{2^{k\al}|t|\ges BR},
\end{align*}
where $B\gg 1$ is to be determined. From this bound, we get
\[\norm{K_E}_{L_t^1L^\infty_{r,r'}}\les  B2^{-k\al}(R^{1/2}\lambda^{-5/2}+R^{-1})+B^{-1}2^{-k\al}.\]
Taking $B=R^{-1/4}\lambda^{5/4}$, we get $\norm{K_E}_{L_t^1L^\infty_{r,r'}}\les R^{1/4}\lambda^{-5/4}2^{-k\al}$.  Thus we prove \eqref{eq:ER3infty}. By interpolation with \eqref{eq:ER3L2} we complete the proof. 
\end{proof}

Using Lemma \ref{lem:Tjkd3} with $\lambda=R^{1/2}$ we are able to prove Theorem \ref{thm} (2) in the dimension three and higher. Indeed, if $d\geq 3$ and $\frac{4d-2}{2d-3}<r<\frac{2d-2}{d-2}$, we have
\begin{align*}
&\norm{\chi_{\geq -k} (s)s^{\frac{d-1}{r}-\frac{d-2}{2}}T_k^\nu (h)}_{L_t^2L_s^r}\\
\les& \sum_{j\geq -k}2^{j(\frac{d-1}{r}-\frac{d-2}{2})}\norm{T_{j,k}^\nu (h)}_{L_t^2L_s^r}\\
\les& \sum_{m=1}^3\sum_{j\geq -k}2^{j(\frac{d-1}{r}-\frac{d-2}{2})}2^{-k/r}\norm{T_{j,k}^{\nu,m} (h)}_{L_t^2L_s^r}\\
\les&2^{-k/r}2^{-k\al/2}\sum_{j\geq -k}2^{j(\frac{d-1}{r}-\frac{d-2}{2})}\min([2^{-(j+k)/4}2^{k(\al-\be)/4}]^{1-2/r},1)\norm{h}_2\\
\les&2^{-k}2^{k(\frac{d}{2}-\frac{d}{r})}2^{-k\al/2}2^{k(\al-\be)(\frac{d-1}{r}-\frac{d-2}{2})}\norm{h}_2,
\end{align*}
and similarly for $r=\frac{2d-2}{d-2}$ we have
\begin{align}
\norm{\chi_{\geq -k} (s)s^{\frac{d-1}{r}-\frac{d-2}{2}}T_k^\nu (h)}_{L_t^2L_s^r}\les 2^{-k}\jb{k(\al-\beta)}2^{k(\frac{d}{2}-\frac{d}{r})}2^{-k\al/2}\norm{h}_2.
\end{align}

For $d=2$ and $6<r<\infty$, we use Lemma \ref{lem:Tjkd3} with $\lambda=R^{\frac{1}{3}+\epsilon}$. For $d=2$, $\nu\in \N$, the Bessel function have better decay estimates. For example, Lemma 2.3 in \cite{Guo} shows that if $\nu\in \N$, $\nu>r+\lambda$, and
$\lambda>r^{\frac{1}{3}+\e}$ for some $\e>0$, then for any $K\in \N$
\begin{align}
|J_\nu(r)|+|J_\nu'(r)|\leq C_{K,\e}r^{-K\e}.
\end{align}
With this we can get an improvement for $T_{j,k}^{\nu,2}$: for any $K\in \N$
\[
\norm{T_{j,k}^{\nu,2}  (h)}_{L_t^2L_s^r}\les 2^{-k\al/2}R^{-K}\norm{h}_2.
\] 
For $T_{j,k}^{\nu,3}$, the estimate on the error term is not good enough. We need an improvement by using Lemma \ref{lem:Bessel} part (2) for some large $K$ as in \cite{Guo}. We omit the details. 

\section{Proof of the main theorem}

In this section we prove Theorem \ref{thm:main}. To better illustrate our ideas, we only prove the theorem in the radial case.  In the general case, one can easily follow the techniques of integration in $SO(3)$ used in \cite{GLNW} to complete the proof.  The proof is based on applying Picard iteration in suitable spaces to
the following equivalent integral equations of \eqref{eq:GPm}
\EQ{\label{eq:GPint}
\begin{cases}
m=e^{-itH}\phi+\int_0^t e^{-i(t-s)H}[N_2+N_3+N_4+N_5](m,u)ds,\\
u_1=m_1-\frac{2u_1^2+u_2^2}{2-\De},\\
u_2=U^{-1}m_2,
\end{cases}
}
where $\phi=m(0)$ and
\EQ{
N_2(m,u)&=U(m_1^2)+\frac{2i}{2-\De}[-3m_1\De u_2-2\na m_1\cdot\na u_2],\\
N_3(m,u)&=U(2m_1R)+iN_3^1(u)+\frac{2i}{2-\De}[4u_1m_1u_2+ m_1^2u_2],\\
N_4(m,u)&=U(R^2 - |u|^4/4)+\frac{2i}{2-\De}[4u_1Ru_2+2u_2m_1R],\\
N_5(m,u)&=\frac{2i}{2-\De}[u_2R^2-u_2|u|^4/4].
}
We use the following resolution space
\begin{align*}
m\in X=&L_t^\infty L_x^2\cap L_t^{5/2}L_x^5\cap L_{t,x}^3 \cap D^{-1} (L_t^\infty L_x^2\cap L_{t,x}^3),\\
u_1\in Y=&L_t^{\infty}L_x^3\cap L_t^{5/2}L_x^5\cap L_t^{3}L_x^6\cap D^{-1}(L_t^{\infty}L_x^2\cap L_{t,x}^3),\\
u_2\in Z=&L_t^{5}L_x^{10}\cap D^{-1} (L_t^\infty L_x^2\cap L_{t,x}^3).
\end{align*}
Note that by interpolation and Sobolev embedding we have the embedding relation $X\subset Y\subset Z$. For simplicity of notations we write $S=Y\times Z$, and $u=(u_1,u_2)\in S$ with norm $\norm{u}_S=\norm{u_1}_Y+\norm{u_2}_Z$.
We use the dual space $N$ for the nonlinearity, with norm given by
\begin{align*}
\norm{F}_{N}=\norm{F}_{L^{3/2}_tH^1_{3/2}+ (L^1_tL^2_x\cap L^{3/2}_t\dot H^1_{3/2})}.
\end{align*}
Here $A+B$ denotes the standard sum space of two Banach spaces $A,B$.

\begin{lem}[Linear estimates]
We have the following estimate
\EQ{
\norm{e^{-itH}\phi}_{X}&\les \norm{\phi}_{H^1}\\
\normo{\int_0^t e^{-i(t-s)H}[F(s,\cdot)](x)ds}_{X}&\les \norm{F}_{N}
}
\end{lem}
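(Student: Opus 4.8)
The plan is to establish the homogeneous estimate first --- it is the heart of the lemma --- by Littlewood--Paley decomposition together with Corollary~\ref{cor:sphstrqr}, and then to deduce the retarded estimate from it by duality and the Christ--Kiselev lemma. Since we work in the radial setting, $L^r_xL^2_\sigma=L^r_x$ on radial functions and $e^{-itH}$ preserves radiality, so Corollary~\ref{cor:sphstrqr} is used in the form $\norm{e^{-itH}P_k\phi}_{L^q_tL^r_x}\les C_k(q,r)\norm{P_k\phi}_{L^2_x}$.

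For the homogeneous estimate I would decompose $\phi=\sum_kP_k\phi$ and handle the components of $X$ separately. The $L^\infty_tL^2_x$ part equals $\norm{\phi}_{L^2}$ and the $D^{-1}L^\infty_tL^2_x$ part equals $\norm{\phi}_{\dot H^1}$, both by unitarity of $e^{-itH}$. For the components $L^q_tL^r_x$ with $(q,r)\in\{(5/2,5),(3,3)\}$, and for $D^{-1}L^3_{t,x}$ (the $L^3_{t,x}$-component applied to $De^{-itH}\phi$, i.e.\ with an extra factor $2^k$ on the $k$-th piece), I would pass to the Littlewood--Paley square function and apply Minkowski in $x$ and then in $t$ (legitimate since $q,r\ge2$) to dominate the norm by $\big(\sum_k\norm{e^{-itH}P_k\phi}^2_{L^q_tL^r_x}\big)^{1/2}$, and then invoke Corollary~\ref{cor:sphstrqr}. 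The one point to check is that the exponents in $X$ are chosen so that the resulting frequency sums converge: for $k\ge0$ the factor $C_k(q,r)$ (resp.\ $2^kC_k(q,r)$ for $D^{-1}L^3_{t,x}$) is $\les2^k$ (resp.\ $\les2^{2k}$), so the high-frequency contribution is controlled by $\norm{\phi}_{\dot H^1}$ via $\norm{P_k\phi}_{\dot H^1}\sim2^k\norm{P_k\phi}_{L^2}$; while for $k<0$ the same factor is uniformly bounded, so the low-frequency contribution is controlled by $\norm{\phi}_{L^2}$ (e.g.\ $C_k(3,3)=2^{-k/6}$ for $k\ge0$ and $\jb{k}^{2/3}2^{k/4}$ for $k<0$). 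Summing gives $\norm{e^{-itH}\phi}_X\les\norm{\phi}_{\dot H^1}+\norm{\phi}_{L^2}\sim\norm{\phi}_{H^1}$; moreover, since $\sup_kC_k(3,3)\les1$, the same computation yields the auxiliary estimate $\norm{e^{-itH}\psi}_{L^3_{t,x}}\les\norm{\psi}_{L^2}$, which the duality step below uses.

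For the retarded estimate I would write $F=F_1+F_2$ almost realizing $\norm{F}_N$, with $F_1\in L^{3/2}_tH^1_{3/2}$ and $F_2\in L^1_tL^2_x\cap L^{3/2}_t\dot H^1_{3/2}$, and split $F_2=P_{\le0}F_2+P_{>0}F_2$. Since $\jb{D}\sim|D|$ on frequencies $\ges1$, the high-frequency piece $P_{>0}F_2$ also lies in $L^{3/2}_tH^1_{3/2}$ with comparable norm, so it remains to treat (a) a general $G\in L^{3/2}_tH^1_{3/2}$ and (b) the low-frequency term $P_{\le0}F_2\in L^1_tL^2_x$. For (a), all the time exponents occurring in $X$ (namely $\infty,5/2,3$) strictly exceed $3/2$, so by the Christ--Kiselev lemma I may delete the retarded truncation and estimate $\int_\R e^{-i(t-s)H}G(s)\,ds=e^{-itH}\Psi$ with $\Psi:=\int_\R e^{isH}G(s)\,ds$; then $\norm{e^{-itH}\Psi}_X\les\norm{\Psi}_{H^1}$ by the homogeneous bound, and by duality $\norm{\Psi}_{H^1}=\sup_{\norm{\chi}_{H^{-1}}\le1}\big|\int\langle G(s),e^{-isH}\chi\rangle\,ds\big|\le\norm{G}_{L^{3/2}_tH^1_{3/2}}\norm{e^{-isH}\chi}_{L^3_tH^{-1}_3}\les\norm{G}_{L^{3/2}_tH^1_{3/2}}\norm{\chi}_{H^{-1}}$, the last step being $\norm{e^{-isH}\chi}_{L^3_tH^{-1}_3}=\norm{e^{-isH}\jb{D}^{-1}\chi}_{L^3_{t,x}}\les\norm{\jb{D}^{-1}\chi}_{L^2}$ from the auxiliary estimate. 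For (b), the $L^\infty_tL^2_x$ and $D^{-1}L^\infty_tL^2_x$ parts are $\le\norm{P_{\le0}F_2}_{L^1_tL^2_x}$ by Minkowski and unitarity, and the remaining parts follow as in (a), now using that the time exponents of $X$ also exceed $1$ and that on frequencies $\les1$ only the $L^2$ norm of the data enters the homogeneous estimate --- which is exactly what $L^1_tL^2_x$ supplies. Adding the contributions gives $\normo{\int_0^te^{-i(t-s)H}F\,ds}_X\les\norm{F}_N$.

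The main difficulty I anticipate is bookkeeping rather than analysis: the genuine dispersive input is entirely packaged in Corollary~\ref{cor:sphstrqr}, and what remains is to keep careful track of the mismatch between the $\dot H^1$ weight at high frequency and the $L^2$ weight at low frequency --- precisely the ``lack of $L^2$ control in the energy space'' flagged in the introduction --- and to see that the derivative-free summand $L^1_tL^2_x$ of $N$ can be absorbed through the $L^\infty_tL^2_x$ component of $X$ (directly) and the low-frequency parts of the Strichartz components of $X$. Once these exponent matchings are confirmed, the remainder is the standard $TT^*$/Christ--Kiselev Strichartz argument.
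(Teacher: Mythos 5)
Your proposal is correct and follows essentially the same route as the paper's (very terse) proof: the homogeneous bound comes from the frequency-localized averaged Strichartz estimates of Corollary \ref{cor:sphstrqr} summed over dyadic blocks, and the retarded bound from duality plus the Christ--Kiselev lemma (the paper records this as the dyadic estimate with constant $C_k(q,r)C_k(\wt q,\wt r)$ and leaves the summation and the splitting of $N$ implicit, which you carry out explicitly). Your exponent bookkeeping --- in particular $C_k(3,3)\les 1$ uniformly, and the absorption of the $L^1_tL^2_x$ summand of $N$ through the low-frequency pieces --- is exactly the verification the paper omits.
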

\begin{proof}
The first inequality follows from Lemma \ref{lem:sphstri}. For the  inhomogeneous estimate we use the Christ-Kiselev lemma. Indeed, by this lemma, we immediately get: if $(q,r), (\wt q,\wt r)$ both satisfy the conditions in Corollary \ref{cor:sphstrqr} and $(q,\wt q)\neq (2,2)$, then
\begin{align}
\normo{\int_0^t e^{-i(t-s)H}P_k f(s)ds}_{L_t^qL_x^rL_\sigma^{2}(\R\times\R^3)}\les C_k(q,r)C_k(\wt q,\wt r)\norm{f}_{L_t^{\wt q'}L_x^{\wt r'}L_\sigma^{2}(\R\times\R^3)}
\end{align}
where $C_k(q,r)$ is given by \eqref{eq:Ckqr}. 
\end{proof}

\subsection{Nonlinear estimates} 
In this subsection we prove the crucial nonlinear estimates. 
Then main difficulty is the weak control on the low frequency component on $u_2$. All the nonlinear estimates are proved by paraproduct decompositions and H\"older inequalities.

\begin{lem}[Estimate for $u_1$]\label{lem:u1}
We have
\begin{align*}
\norm{(2-\Delta)^{-1}(u_2u_2)}_{Y}\les& \norm{u_2}_{Z}^2.
\end{align*}
\end{lem}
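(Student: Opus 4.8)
The plan is to estimate $(2-\Delta)^{-1}(u_2 u_2)$ componentwise in the three building blocks of $Y = L_t^\infty L_x^3 \cap L_t^{5/2}L_x^5 \cap L_t^3 L_x^6 \cap D^{-1}(L_t^\infty L_x^2 \cap L_{t,x}^3)$, exploiting the two-derivative smoothing of $(2-\Delta)^{-1}$ which at high frequency gains a full factor $\langle\nabla\rangle^{-2}$ and at low frequency behaves like the bounded operator $\frac12\cdot\mathrm{Id}$. Since $u_2$ is controlled only in $Z = L_t^5 L_x^{10} \cap D^{-1}(L_t^\infty L_x^2 \cap L_{t,x}^3)$, the basic quantities available are $\|u_2\|_{L_t^5 L_x^{10}}$, $\|D u_2\|_{L_t^\infty L_x^2}$, and $\|D u_2\|_{L_{t,x}^3}$, from which (via Sobolev embedding in $x$ and interpolation) one also gets, for instance, $u_2 \in L_t^3 L_x^6$ (since $Du_2\in L^3_{t,x}$ gives $u_2\in L^3_t L^6_x$) and $u_2\in L_t^\infty L_x^6$ (from $Du_2\in L_t^\infty L_x^2$). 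I would first record this short list of embedded norms for $u_2$ controlled by $\|u_2\|_Z$.

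For the two Lebesgue pieces $L_t^{5/2}L_x^5$ and $L_t^3 L_x^6$ of $Y$, I would simply discard the smoothing operator using boundedness of $(2-\Delta)^{-1}$ on $L_x^p$ (Mikhlin multiplier theorem), and then estimate $\|u_2^2\|_{L_t^{5/2}L_x^5}\les \|u_2\|_{L_t^5 L_x^{10}}^2$ by Hölder in both $t$ and $x$, and similarly $\|u_2^2\|_{L_t^3 L_x^6}\les \|u_2\|_{L_t^6 L_x^{12}}^2$ — here interpolating between $L_t^5 L_x^{10}$ and $L_t^\infty L_x^6$ inside $Z$ gives $u_2\in L_t^6 L_x^{12}$, so this closes. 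For the $L_t^\infty L_x^3$ piece of $Y$ one cannot afford $\|u_2^2\|_{L_t^\infty L_x^3} = \|u_2\|_{L_t^\infty L_x^6}^2$ directly from $L_t^5 L_x^{10}$, but it follows from $u_2\in L_t^\infty L_x^6$, which is exactly $Du_2\in L_t^\infty L_x^2$ via Sobolev; so again this closes using the $D^{-1}L_t^\infty L_x^2$ component of $Z$.

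The main obstacle — and the genuinely delicate part — is the $D^{-1}(L_t^\infty L_x^2 \cap L_{t,x}^3)$ piece of $Y$, i.e. bounding $\|D(2-\Delta)^{-1}(u_2^2)\|_{L_t^\infty L_x^2 \cap L_{t,x}^3}$. Here one should think of $D(2-\Delta)^{-1}$ as roughly $\langle\nabla\rangle^{-1}$: at high frequency it is $\approx D^{-1}$, costing one derivative of smoothing, while at low frequency it is $\approx D$, an actual derivative loss — but that low-frequency derivative can be moved onto the quadratic product via a Coifman–Meyer / paraproduct argument, writing $u_2^2 = 2(\text{low}\cdot\text{high}) + (\text{high}\cdot\text{high})$ under Littlewood–Paley and, in the low-high case, transferring the outer $D$ onto the high-frequency factor to produce $D^{-1}(\cdot)\,(Du_2\,\text{high})\,(u_2\,\text{low})$-type terms. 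Then one pairs $Du_2$ with its $Z$-norms ($L_t^\infty L_x^2$ or $L_{t,x}^3$) and the remaining low-frequency $u_2$ with an $L_t^\infty L_x^6$ or $L_t^5 L_x^{10}$ norm, summing the Littlewood–Paley pieces with the help of the $D^{-1}$ gain on the output. I would carry this out by: (i) Littlewood–Paley decomposing each factor; (ii) treating the high-high interaction by putting the output low frequency, using that $D(2-\Delta)^{-1}P_{\le k}$ has operator norm $\les 2^k$ on $L^2$ and Bernstein to absorb it; (iii) treating the low-high interaction by the derivative-transfer just described; (iv) summing geometric series in the frequency parameters. The $L_{t,x}^3$ endpoint of the output requires a bit more care than $L_t^\infty L_x^2$ because one must keep a time exponent available for Hölder, but the same scheme works with $\|Du_2\|_{L_{t,x}^3}$ carrying the $L_{t,x}^3$ slot and $\|u_2\|_{L_t^\infty L_x^6}$ carrying the remainder.
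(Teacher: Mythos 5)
Your handling of the $L_t^\infty L_x^3$ and $L_t^{5/2}L_x^5$ components of $Y$ is exactly the paper's. However, there is a concrete slip in the $L_t^3L_x^6$ component: interpolating $L_t^5L_x^{10}$ with $L_t^\infty L_x^6$ gives $L_t^6L_x^9$, not $L_t^6L_x^{12}$ (with $1/q=(1-\theta)/5$ and $q=6$ one gets $\theta=1/6$, hence $1/r=\tfrac{5}{6}\cdot\tfrac{1}{10}+\tfrac16\cdot\tfrac16=\tfrac19$), so that step does not close as written. The bound $\|u_2\|_{L^6_tL^{12}_x}\lesssim\|u_2\|_{Z}$ is in fact true, but it requires Gagliardo--Nirenberg, $\|f\|_{L^{12}}\lesssim\|f\|_{L^6}^{1/2}\|\nabla f\|_{L^3}^{1/2}$, using the $Du_2\in L^3_{t,x}$ part of $Z$. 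The paper sidesteps this entirely by estimating $\|(2-\Delta)^{-1}(u_2^2)\|_{L^3_tL^6_x}\lesssim\|\nabla(u_2^2)\|_{L^3_tL^2_x}\lesssim\|u_2\|_{L^\infty_tL^6_x}\|\nabla u_2\|_{L^3_{t,x}}$ via Sobolev embedding $\dot H^1\subset L^6$.

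For the piece you identify as the main obstacle, the paraproduct machinery is unnecessary, and the paper's argument is two lines. Write $D(2-\Delta)^{-1}=K(D)\cdot\nabla$ with $K(\xi)=-i\xi\,|\xi|^{-1}(2+|\xi|^2)^{-1}$, so $|K(\xi)|=(2+|\xi|^2)^{-1}$: this is a Riesz transform composed with an inhomogeneous smoothing operator of order $-2$, hence bounded from $L^2_x$ to $L^3_x$ and from $L^{3/2}_x$ to $L^2_x$. The low-frequency ``derivative loss'' you worry about is therefore cancelled by one application of the Leibniz rule, $\nabla(u_2^2)=2u_2\nabla u_2$, since $Z$ already controls one full derivative of $u_2$ in $L^\infty_tL^2_x\cap L^3_{t,x}$: H\"older gives $\|u_2\nabla u_2\|_{L^3_tL^2_x}\le\|u_2\|_{L^\infty_tL^6_x}\|\nabla u_2\|_{L^3_{t,x}}$ and $\|u_2\nabla u_2\|_{L^\infty_tL^{3/2}_x}\le\|u_2\|_{L^\infty_tL^6_x}\|\nabla u_2\|_{L^\infty_tL^2_x}$, both $\lesssim\|u_2\|_Z^2$. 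Your Littlewood--Paley scheme would very likely also succeed, but as written it is only a plan (the high-high summation and the $L^3_{t,x}$ output are not actually estimated), whereas no frequency decomposition or derivative transfer is needed here: every term in the lemma reduces to placing $\nabla u_2$ in a $Z$-norm and the remaining factor in $L^\infty_tL^6_x$ or $L^5_tL^{10}_x$.
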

\begin{proof}
By Sobolev embedding and H\"older's inequalities we have
\begin{align*}
\norm{(2-\Delta)^{-1}(u_2u_2)}_{L_t^{\infty}L_x^3\cap L_t^{5/2}L_x^5}\les& \|u_2u_2\|_{L^\I_tL^3_x\cap L^{5/2}_tL^5_x} \\
\les&\norm{u_2}_{L^\I_tL^6_x \cap L^5_tL^{10}_x}\cdot \norm{u_2}_{L^\I_tL^6_x \cap L^5_tL^{10}_x}\\
\les&\norm{u_2}_{Z}\cdot \norm{u_2}_{Z},
\end{align*}
and
\begin{align*}
&\norm{D(2-\Delta)^{-1}(u_2u_2)}_{L_{t,x}^3}+\norm{(2-\Delta)^{-1}(u_2u_2)}_{L_t^{3}L_x^6}\\
&\les \|\na(u_2u_2)\|_{L^3_tL^2_x}
\les \|u_2\|_{L^\I_tL^6_x}\|\na u_2\|_{L^3_{t,x}}\les \norm{u_2}_{Z}^2,
\end{align*}
and
\begin{align*}
\norm{D(2-\Delta)^{-1}(u_2u_2)}_{L_t^{\infty}L_x^2}
\les& \norm{\na(u_2u_2)}_{L_t^{\infty}L_x^{3/2}}\\
\les& \|u_2\|_{L^\I_tL^6_x}\|\na u_2\|_{L_{t}^\infty L_x^2}\les\norm{u_2}_{Z}^2.
\end{align*}
Therefore, we complete the proof. 
\end{proof}

\begin{lem}[Estimate for $R$]\label{lem:R}
We have
\begin{align*}
\norm{R}_{L_t^3H^1\cap L_t^\infty L_x^{3/2}}\les& \norm{u_2}_{Z}^2+ \norm{u_1}_{Y}^2.
\end{align*}
\end{lem}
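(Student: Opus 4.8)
The plan is to split $R$ exactly as in \eqref{eq:R}, namely $R=R_2+R_1$ with
\EQN{
 R_2:=\frac{-\De u_2^2}{2(2-\De)}, \qquad R_1:=-\frac{(2+\De)u_1^2}{2(2-\De)},
}
and to bound each piece separately in $L^\infty_tL^{3/2}_x$ and in $L^3_tH^1_x$ (the latter meaning $L^3_tL^2_x$ together with $L^3_t\dot H^1_x$). The only facts about the multipliers I would invoke are: $\frac{2+\De}{2-\De}$ and $\frac{\na}{2-\De}$ are bounded on $L^p(\R^3)$ for $1<p<\infty$ (Mikhlin--H\"ormander; in fact the kernel of $\frac{\na}{2-\De}$ is integrable in three dimensions, though Mikhlin already suffices); on $L^2_x$ all these operators are controlled by Plancherel together with the pointwise bounds $\big|\tfrac{2-|\xi|^2}{2+|\xi|^2}\big|\le 1$ and $\tfrac{|\xi|^2}{2+|\xi|^2}\le 1$; and $(2-\De)^{-1}$ is bounded on $L^p_x$. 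From the definitions of $Y$ and $Z$ I would freely use $Y\hookrightarrow L^\infty_tL^3_x\cap L^3_tL^6_x$, $Du_1\in L^\infty_tL^2_x\cap L_{t,x}^3$ (hence, by Sobolev on $\R^3$, $u_1\in L^\infty_tL^6_x$ and $\na u_1\in L_{t,x}^3$), and $Du_2\in L^\infty_tL^2_x\cap L_{t,x}^3$ (hence $u_2\in L^\infty_tL^6_x$ and $\na u_2\in L_{t,x}^3$). The one structural point to keep in mind throughout is that $Z$ provides \emph{no} $L^2_x$ bound on $u_2$ itself, only on $\na u_2$.

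For $R_1$ the multiplier $\frac{2+\De}{2-\De}$ is bounded on $L^{3/2}_x$ and $L^2_x$, and $\na\frac{2+\De}{2-\De}$ has $L^2$-symbol of size $\le|\xi|$, so it is enough to control $u_1^2$ and $\na(u_1^2)$. The estimates would then read
\EQN{
 \norm{R_1}_{L^\infty_tL^{3/2}_x}&\les\norm{u_1^2}_{L^\infty_tL^{3/2}_x}=\norm{u_1}_{L^\infty_tL^3_x}^2,\\
 \norm{R_1}_{L^3_tL^2_x}&\les\norm{u_1^2}_{L^3_tL^2_x}\le\norm{u_1}_{L^\infty_tL^3_x}\norm{u_1}_{L^3_tL^6_x},\\
 \norm{\na R_1}_{L^3_tL^2_x}&\les\norm{u_1\na u_1}_{L^3_tL^2_x}\le\norm{u_1}_{L^\infty_tL^6_x}\norm{\na u_1}_{L_{t,x}^3},
}
the middle line using $\norm{u_1}_{L^4_x}\le\norm{u_1}_{L^3_x}^{1/2}\norm{u_1}_{L^6_x}^{1/2}$. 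Each right side is $\les\norm{u_1}_Y^2$.

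For $R_2$ the key move is to transfer one derivative onto $u_2$ before inverting $2-\De$: since $-\De(u_2^2)=-2\operatorname{div}(u_2\na u_2)$, one writes $R_2=-\frac{\na}{2-\De}\cdot(u_2\na u_2)$. (It would be fatal to use instead the boundedness of $\frac{-\De}{2-\De}$ on $L^p_x$ and try to estimate $\norm{u_2^2}_{L^2_x}=\norm{u_2}_{L^4_x}^2$, since $Z$ controls no norm of $u_2$ below $L^6_x$.) Since $\frac{\na}{2-\De}$ is bounded on $L^{3/2}_x$ and on $L^2_x$, I would then estimate
\EQN{
 \norm{R_2}_{L^\infty_tL^{3/2}_x}&\les\norm{u_2\na u_2}_{L^\infty_tL^{3/2}_x}\le\norm{u_2}_{L^\infty_tL^6_x}\norm{\na u_2}_{L^\infty_tL^2_x},\\
 \norm{R_2}_{L^3_tL^2_x}&\les\norm{u_2\na u_2}_{L^3_tL^2_x}\le\norm{u_2}_{L^\infty_tL^6_x}\norm{\na u_2}_{L_{t,x}^3},
}
the second line relying on precisely the H\"older pairing $L^6_x\cdot L^3_x\hookrightarrow L^2_x$ that $Z$ makes available, and finally (using only Plancherel and $\tfrac{|\xi|^2}{2+|\xi|^2}\le1$, so that $\na R_2$ has $L^2$-symbol $\le|\xi|$)
\EQN{
 \norm{\na R_2}_{L^3_tL^2_x}\les\norm{\na(u_2^2)}_{L^3_tL^2_x}\les\norm{u_2}_{L^\infty_tL^6_x}\norm{\na u_2}_{L_{t,x}^3}.
}
All three are $\les\norm{u_2}_Z^2$, and adding the $R_1$ and $R_2$ bounds gives the lemma. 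I expect the main obstacle to be exactly the issue flagged above: because the energy space gives no $L^2_x$ control on $u_2$, $R_2$ cannot be handled by the naive boundedness of $\frac{-\De}{2-\De}$, and one is forced to integrate a derivative by parts into $u_2$ and then exploit the pure $L^p$-boundedness -- not the smoothing -- of $\frac{\na}{2-\De}$, together with the $L^6_x\cdot L^3_x$ H\"older split; all remaining steps are routine H\"older/Sobolev bookkeeping against the definitions of $Y$ and $Z$.
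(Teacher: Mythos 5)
Your proposal is correct and follows essentially the same route as the paper: split $R$ into the $u_1^2$ and $u_2^2$ pieces, reduce the latter to $u_2\nabla u_2$ via $-\De u_2^2=-2\operatorname{div}(u_2\na u_2)$ and the $L^p$-boundedness of $\na(2-\De)^{-1}$, and close with the same H\"older/Sobolev pairings against the $Y$ and $Z$ norms. The point you flag as the main obstacle (no $L^2_x$ control of $u_2$ itself, so one must move a derivative onto $u_2$) is precisely how the paper's one-line estimate $\|R\|_{L^3_tH^1_x}\lesssim\|u_2\na u_2\|_{L^3_tL^2_x}+\|u_1^2\|_{L^3_tH^1_x}$ is to be read.
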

\begin{proof}
Since $R=\frac{-\De u_2^2}{2(2-\De)}-\frac{(2+\De)u_1^2}{2(2-\De)}$, then we have
\begin{align*}
\|R\|_{L^3_tH^1_x} \pt\lec \|u_2\na u_2\|_{L^3_tL^2_x}+\|u_1^2\|_{L^3_tH^1_x}
\pr\lec \|u\|_{L^\I_tL^6_x}\|\na u\|_{L^3_{t,x}}+\|u_1\|_{L^3_tL^6_x}\|u_1\|_{L^\I_tL^3_x}
\end{align*}
and
\EQN{
 \|R\|_{L^\I_tL^{3/2}_x} \pt\lec \|u_2\na u_2\|_{L^\I_tL^{3/2}_x}+\|u_1^2\|_{L^\I_tL^{3/2}_x} 
 \pr\lec \|u_2\|_{L^\I_tL^6_x}\|\na u_2\|_{L^\I_tL^2_x}+\|u_1\|_{L^\I_tL^3_x}^2.}
Therefore, we complete the proof. 
\end{proof}

\begin{lem}[Quadratic terms]
We have
\EQN{
\norm{N_2(m,u)}_{L^{3/2}_tH^1_{3/2}}\les \norm{m}_{X}^2+ \norm{m}_{X}\norm{u_2}_{Z}.
}
\end{lem}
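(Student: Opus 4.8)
The plan is to estimate each of the two pieces of $N_2(m,u)$ separately, using only Sobolev embedding, fractional Leibniz, and Hölder in time and space, exploiting that one factor is always an $m$-type quantity (controlled in the smooth space $X$) and that the singular operator $\frac{1}{2-\Delta}$ costs two derivatives which are supplied by the $\Delta u_2$ or $\nabla u_2$ factor. First I would handle the term $U(m_1^2)$: since $U=\sqrt{-\Delta(2-\Delta)^{-1}}$ is a bounded order-$0$ multiplier that gains one derivative at low frequency, and $H^1_{3/2}$ requires one derivative, it suffices to bound $\|m_1^2\|_{L^{3/2}_t L^{3/2}_x}$ together with $\|\nabla(m_1^2)\|_{L^{3/2}_tL^{3/2}_x}$. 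By the Leibniz rule $\nabla(m_1^2)=2m_1\nabla m_1$, and splitting $L^{3/2}_x = L^6_x\cdot L^2_x$ and $L^{3/2}_t = L^3_t\cdot L^3_t$ we get $\|m_1 \nabla m_1\|_{L^{3/2}_{t,x}}\lesssim \|m_1\|_{L^3_t L^6_x}\|\nabla m_1\|_{L^3_{t,x}}\lesssim \|m\|_X^2$, since $L^3_tL^6_x\supset L^3_{t,x}\cap L^\infty_tL^2_x$ (interpolation/Sobolev) is contained in $X$, and $\nabla m_1\in L^3_{t,x}$ because $m\in D^{-1}L^3_{t,x}$; the undifferentiated piece is easier.

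For the second piece $\frac{2i}{2-\Delta}[-3m_1\Delta u_2 - 2\nabla m_1\cdot\nabla u_2]$, the key point is that $\frac{1}{2-\Delta}$ composed with $H^1_{3/2}$ is, modulo harmless low-frequency gains, like $\langle\nabla\rangle^{-1}$ acting into $L^{3/2}_x$, so I would reduce to showing $\|m_1\Delta u_2\|_{L^{3/2}_t \dot H^{-1}_{3/2}}$ and $\|\nabla m_1\cdot\nabla u_2\|_{L^{3/2}_t\dot H^{-1}_{3/2}}$ are bounded by $\|m\|_X\|u_2\|_Z$; for the $\dot H^{-1}$ norm one integrates by parts once so that $\Delta u_2$ effectively becomes $\nabla u_2$ paired against $\nabla m_1$, i.e. both bad terms are morally $\langle\nabla\rangle^{-1}(\nabla m_1\cdot\nabla u_2)$ plus $\langle\nabla\rangle^{-1}(u_2\,\Delta m_1)$-type remainders. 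Then $\|\langle\nabla\rangle^{-1}(\nabla m_1\,\nabla u_2)\|_{L^{3/2}_x}\lesssim \|\nabla m_1\,\nabla u_2\|_{L^{?}_x}$; choosing exponents, $\nabla m_1\in L^2_x$ (from $m\in D^{-1}L^\infty_tL^2_x\cap X$, giving $\nabla m_1\in L^\infty_tL^2_x$ and $\in L^3_{t,x}$) and $\nabla u_2\in L^2_x$ as well (since $u_2\in D^{-1}L^\infty_tL^2_x$), and the product lands in $L^1_x$ which $\langle\nabla\rangle^{-1}$ maps to $L^{3/2}_x$ in $\R^3$ by Sobolev; tracking the time exponents $\tfrac13+\tfrac{?}{}=\tfrac23$ works out by pairing $\nabla m_1\in L^3_{t,x}$ (hence $L^3_t L^2_x$ after Bernstein on a fixed frequency piece) with $\nabla u_2\in L^3_{t,x}$ from $u_2\in D^{-1}L^3_{t,x}$, giving $L^{3/2}_t L^{3/2}_x$. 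The remainder term where the derivative falls back on $m_1$ uses instead $u_2\in L^5_tL^{10}_x$ paired with $\Delta m_1$ in a suitable negative-order norm, again closing by Sobolev on $\R^3$.

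The main obstacle I expect is not any single inequality but the bookkeeping of which Sobolev/interpolation embeddings among the components of $X$, $Y$, $Z$ are available — in particular making precise that $\frac{1}{2-\Delta}$ mapping into $H^1_{3/2}$ really does behave like gaining one full derivative uniformly in frequency (the low-frequency gain is genuine, the high-frequency behavior is $\langle\nabla\rangle^{-1}$), so that all the negative-regularity manipulations above are legitimate; and arranging the paraproduct decomposition $m_1\Delta u_2 = \sum_{k} P_k m_1\, \Delta P_{\lesssim k}u_2 + \ldots$ so that in the "high-high" and "low-high" regimes one never needs more than the control actually present on the low frequencies of $u_2$ (which is only $\nabla u_2\in L^2$, not $u_2\in L^2$). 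Once the frequency-localized estimates are set up, summing in $k$ is immediate because each term carries a positive power of $2^k$ or is already summable, and Christ–Kiselev / the linear estimate of the previous lemma is not even needed here since $N$ is a spatial-norm dual space.
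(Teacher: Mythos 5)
Your overall strategy is the same as the paper's: estimate the three pieces of $N_2$ separately, use that $\langle\nabla\rangle U$ costs exactly one derivative (so $\norm{U(m_1^2)}_{H^1_{3/2}}\les\norm{\nabla(m_1^2)}_{L^{3/2}}$), use that $(2-\Delta)^{-1}$ maps $H^{-1}_{3/2}$ into $H^1_{3/2}$, and transfer the extra derivative off $\Delta u_2$. No paraproduct decomposition is needed: every occurrence of $u_2$ in $N_2$ already carries at least one derivative, so plain H\"older in $L^3_{t,x}\cdot L^3_{t,x}\to L^{3/2}_{t,x}$ closes everything. The paper packages your integration by parts as the duality product estimate $\norm{fg}_{H^{-1}_{3/2}}\les\norm{f}_{H^1_3}\norm{g}_{H^{-1}_3}$ applied with $f=m_1$, $g=\Delta u_2$, giving $\norm{(2-\Delta)^{-1}[m_1\Delta u_2]}_{L^{3/2}_tH^1_{3/2}}\les\norm{m_1}_{L^3_tH^1_3}\norm{\nabla u_2}_{L^3_{t,x}}$.

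Two concrete points in your write-up would fail as stated. First, the spatial H\"older split for $U(m_1^2)$ is off: $\norm{m_1}_{L^6_x}\norm{\nabla m_1}_{L^3_x}$ controls $\norm{m_1\nabla m_1}_{L^2_x}$, not $L^{3/2}_x$, and the alternative $\nabla m_1\in L^3_tL^2_x$ is not globally available from $X$ (interpolating $L^\infty_tL^2_x$ with $L^3_{t,x}$ never reaches $L^3_tL^2_x$); the correct pairing is simply $\norm{m_1}_{L^3_{t,x}}\norm{\nabla m_1}_{L^3_{t,x}}$, both $X$-norms. Second and more seriously, the integration by parts you describe produces a remainder of the form $u_2\,\Delta m_1$, in which $u_2$ appears undifferentiated. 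Since $Z$ only controls $\nabla u_2\in L^\infty_tL^2_x\cap L^3_{t,x}$ and $u_2\in L^5_tL^{10}_x$ (hence $L^\infty_tL^6_x$), there is no global-in-time H\"older pairing of $u_2$ against $\nabla m_1\in L^\infty_tL^2_x\cap L^3_{t,x}$ landing in $L^{3/2}_t$ of a space controlled by $H^{-1}_{3/2}$ (the exponent system is inconsistent), so this remainder cannot be closed with the available norms: your split reintroduces exactly the weak low-frequency control of $u_2$ that you flag as the danger. The correct split is $m_1\Delta u_2=\nabla\cdot(m_1\nabla u_2)-\nabla m_1\cdot\nabla u_2$, after which $\langle\nabla\rangle^{-1}\nabla$ is a bounded multiplier and both terms are bounded by $\norm{m_1}_{L^3_tH^1_3}\norm{\nabla u_2}_{L^3_{t,x}}$. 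With these repairs your argument coincides with the paper's.
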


\begin{proof}
Since $N_2(m,u)=U(m_1^2)-\frac{6i}{2-\De}[m_1\De u_2]-\frac{4i}{2-\De}[\na m_1\cdot\na u_2]$, we estimate the three terms separately. 
\EQN{
 \pt \|U(m_1)^2\|_{L^{3/2}_tH^1_{3/2}} \lec \|\na(m_1)^2\|_{L^{3/2}_{t,x}}
 \lec \|m_1\|_{L^3_{t,x}}\|\na m_1\|_{L^3_{t,x}}, 
 \pr \|(2-\De)^{-1}[\na m_1\cdot\na u_2]\|_{L^{3/2}_tH^1_{3/2}}
 \lec \|\na m_1\|_{L^3_{t,x}}\|\na u_2\|_{L^3_{t,x}}.}
For the remaining term, we use
\EQ{ \label{prod neg Sob}
 \|fg\|_{H^{-1}_{3/2}} \lec \|f\|_{H^1_3}\|g\|_{H^{-1}_3},}
which is the dual of 
\EQN{
 \|fg\|_{H^1_{3/2}} \lec \|f\|_{H^1_3}\|g\|_{H^1_3}.}
Then 
\EQN{
 \|(2-\De)^{-1}[m_1\De u_2]\|_{L^{3/2}_tH^1_{3/2}}
 \pt\lec \|m_1\De u_2\|_{L^{3/2}_tH^{-1}_{3/2}}
 \pr\lec \|m_1\|_{L^3_tH^1_3}\|\na u_2\|_{L^3_{t,x}}.}
Thus we complete the proof.
\end{proof}

\begin{lem}[Cubic terms]
\begin{align*}
\norm{N_3(m,u)}_{N}\les \norm{m}_{X}\norm{u}_{U}^2+\norm{m}_{X}^2 \norm{u}_{S}+\norm{u}_{S}^3.
\end{align*}
\end{lem}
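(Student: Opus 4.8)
The plan is to decompose
\[
N_3(m,u)=U(2m_1R)+iN_3^1(u)+\frac{2i}{2-\De}[4u_1m_1u_2+m_1^2u_2]
\]
into these three groups and to estimate each by a Littlewood--Paley / paraproduct decomposition followed by H\"older's inequality in the dispersive norms built into $X$, $Y$, $Z$, together with the bound for $R$ from Lemma~\ref{lem:R} and Sobolev embedding on $\R^3$. Throughout I use the chain $X\subset Y\subset Z$; that $\na m$, $\na u_1$, $\na u_2$ all lie in $L^\infty_tL^2_x\cap L^3_{t,x}$, so that $m_1,u_1,u_2\in L^q_tL^6_x$ for $3\le q\le\infty$ (in particular in $L^6_{t,x}$) by interpolation; and that $m\in L^3_{t,x}\cap L^{5/2}_tL^5_x$, $u_1\in L^{5/2}_tL^5_x$, $u_2\in L^5_tL^{10}_x$. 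The structural constraint organising everything is that $u_2=U^{-1}m_2$ carries only $\dot H^1$-type control and no $L^2$ control, since $U^{-1}\sim D^{-1}$ at low frequency: every product must route the low frequencies of $u_2$ into $L^5_tL^{10}_x$ or into $\na u_2\in L^3_{t,x}$, never into an $L^2$-in-space norm, and the two-summand shape of the $N$-norm is arranged exactly for this, the $L^1_tL^2_x$ piece being fed by $u_2\in L^5_tL^{10}_x$.

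For $U(2m_1R)$, I would use that $\jb D U=D$ identically (since $\om$'s convention gives $\jb\xi=(2+\xi^2)^{1/2}$), so $\|U(2m_1R)\|_{L^{3/2}_tH^1_{3/2}}\les\|D(m_1R)\|_{L^{3/2}_{t,x}}\les\|\na m_1\cdot R\|_{L^{3/2}_{t,x}}+\|m_1\,\na R\|_{L^{3/2}_{t,x}}$; the first is $\les\|\na m_1\|_{L^3_{t,x}}\|R\|_{L^3_{t,x}}$ using $R\in L^3_tH^1_x\subset L^3_{t,x}$ from Lemma~\ref{lem:R}, and in the second one substitutes \eqref{eq:R}, turning $\na R$ into a bilinear expression of $u$ with one net derivative, and then applies frequency-localised H\"older with $\na u_2\in L^3_{t,x}$ and $m_1,u_2\in L^6_{t,x}$; both contribute to $\norm m_X\norm u_S^2$. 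The trilinear terms $\frac{2i}{2-\De}[4u_1m_1u_2+m_1^2u_2]$ and the four summands of $N_3^1(u)$ carrying a factor $(2-\De)^{-1}$ (in two of them, two such factors) are treated by frequency-localised H\"older (Bernstein), exactly as the analogous cubic terms of the $3$D Zakharov system in \cite{GN,GLNW,Guo}: the inner and outer $(2-\De)^{-1}$'s dominate every derivative that falls on $u_2^2$ or on $u_1^2$ and supply the summability of the frequency sums, so that the low-output-frequency part lands in $L^1_tL^2_x$ (fed by $u_2\in L^5_tL^{10}_x$ against $u_1,m_1\in L^{5/2}_tL^5_x$) and the high-output part in $L^{3/2}_tH^1_{3/2}$; these give the remaining parts of $\norm m_X\norm u_S^2$, $\norm m_X^2\norm u_S$ and $\norm u_S^3$.

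The one genuinely delicate term, and where I expect the work to concentrate, is the last summand of $N_3^1$, namely $N_3^c(u)=-u_2\frac{2+\De}{2-\De}u_1^2$, which carries no smoothing at all. Since $\frac{2+\De}{2-\De}$ is an order-zero Mikhlin multiplier, I would bound $\|u_2\,\frac{2+\De}{2-\De}u_1^2\|_N$ by splitting according to which factor carries the lowest frequency. When $u_2$ sits at or below the output frequency, pair $\|u_2\|_{L^5_tL^{10}_x}$---which sees \emph{all} frequencies of $u_2$ with no loss---against $\|\frac{2+\De}{2-\De}u_1^2\|_{L^{5/4}_tL^{5/2}_x}\les\|u_1^2\|_{L^{5/4}_tL^{5/2}_x}\les\|u_1\|_{L^{5/2}_tL^5_x}^2$, which puts the term in $L^1_tL^2_x$; for the companion $L^{3/2}_t\dot H^1_{3/2}$ bound, move the derivative onto a $u_1$ factor and use $\na u_1\in L^3_{t,x}$ with $u_1,u_2\in L^6_{t,x}$. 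When instead the low-frequency factor is $u_1^2$, the output frequency is comparable to that of $u_2$, so a derivative may safely be transferred onto $u_2$ and absorbed into $\na u_2\in L^3_{t,x}$. Collecting all contributions gives the three claimed bounds. I note that the subtle cancellation between $N_3^c$ and the quintic $N_5^c$ pointed out in the remark after \eqref{eq:GPm} is \emph{not} needed for this trilinear estimate---it is required only later, for the low-frequency closure of the full iterated system after the substitution $u_1=m_1-\frac{2u_1^2+u_2^2}{2-\De}$ generates the quintic interactions.
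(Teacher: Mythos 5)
Your proposal is correct and follows essentially the same route as the paper: the same three-way decomposition, the same identification of $N_3^c=-u_2\frac{2+\De}{2-\De}u_1^2$ as the one critical term to be placed in the sum component $L^1_tL^2_x\cap L^{3/2}_t\dot H^1_{3/2}$ via $\|u_2\|_{L^5_tL^{10}_x}\|u_1\|_{L^{5/2}_tL^5_x}^2$ and the derivative-transfer argument, and the same H\"older pairings for the remaining terms in $L^{3/2}_tH^1_{3/2}$ using Lemma~\ref{lem:R}. The only cosmetic differences are that the paper handles $N_3^1-N_3^c$ with explicit dual product estimates such as $\|fg\|_{H^{-1}_{3/2}}\les\|f\|_{H^1}\|g\|_{H^{-1}_3}$ rather than a paraproduct/Bernstein argument, and bounds $U(2m_1R)$ by the $H^1_{3/2}$-boundedness of $U$ rather than the identity $\jb{D}U=D$.
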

\begin{proof}
Recall that $N_3(m,u)=U(2m_1R)+iN_3^1(u)+\frac{2i}{2-\De}[4u_1m_1u_2+ m_1^2u_2]$. The critical term is \EQ{
 N_3^c(u):=-u_2\frac{2+\De}{2-\De}u_1^2.}
The cubic terms $N_3-iN_3^c$ excepting the critical one are estimated in $L^{3/2}_tH^1_{3/2}$. We have
\EQN{
 \pt\|m_1R\|_{L^{3/2}_tH^1_{3/2}}
 \lec \|m_1\|_{L^3_tH^1_x}\|R\|_{L^3_{t,x}}
 +\|m_1\|_{L^3_tL^6_x}\|\na R\|_{L^3_tL^2_x},
 \pr\|(2-\De)^{-1}[u_1u_2m_1]\|_{L^{3/2}_tH^1_{3/2}}
 \lec \|u_1\|_{L^3_tL^6_x}\|u_2\|_{L^\I_tL^6_x}\|m_1\|_{L^3_{t,x}},}
and similarly for $(2-\De)^{-1}[m_1^2u_2]$. 
For $N_3^1-N_3^c$, we use 
\EQN{
 \pt\|fg\|_{H^{-1}_{3/2}} \lec \|f\|_{H^1}\|g\|_{H^{-1}_3},
 \pq\|fg\|_{H^{-1}_2} \lec \|f\|_{\dot H^1}\|g\|_{H^{-1}_3},}
which are proved by duality in the same way as \eqref{prod neg Sob}. 
Using $L^{6/5}_x\subset H^{-1}_{3/2}$ as well, 
\EQN{
 \|N_3^1-N_3^c\|_{L^{3/2}_tH^1_{3/2}}
 \pt\lec \|(\De u_2)(2-\De)^{-1}(\De u_2^2)\|_{L^{3/2}_tH^{-1}_{3/2}}
 \prQ+ \|(\na u_2)\cdot(2-\De)^{-1}\na(\De u_2^2)\|_{L^{3/2}_tL^{6/5}_x}
 \prQ+ \|(\De u_2)(2-\De)^{-1}(u_1^2)\|_{L^{3/2}_tH^{-1}_{3/2}}+ \|u_2|\na u_2|^2\|_{L^{3/2}_tL^{6/5}_x}
 \pr\lec \|\na u_2\|_{L^3_{t,x}}\Br{\|\De u_2^2\|_{L^3_tH^{-1}_2}+\|u_1^2\|_{L^3_tL^2_x}+\norm{u_2}_{L_t^\infty L_x^6}\norm{\na u_2}_{L^3_{t,x}}}
 \pr\lec \|\na u_2\|_{L^3_{t,x}}\Br{\|u_2\|_{L^\I_t\dot H^1_x}\|\na u_2\|_{L^3_{t,x}}+\|u_1\|_{L^\I_tL^6_x}\|u_1\|_{L^3_{t,x}}}. } 
The critical term is estimated in $L^1_tL^2_x\cap L^{3/2}_t\dot H^1_{3/2}$, by
\EQN{
 \pt\|N_3^c\|_{L^1_tL^2_x} \lec \|u_2\|_{L^5_tL^{10}_x}\|u_1^2\|_{L^{5/4}_tL^{5/2}_x}^2 \lec \|u_2\|_{L^5_tL^{10}_x}\|u_1\|_{L^{5/2}_tL^5_x}^2,}
and 
\EQN{
 \|\na N_3^c\|_{L^{3/2}_{t,x}} \pt\lec \|\na u_2\|_{L^3_{t,x}}\|u_1^2\|_{L^3_{t,x}}+ \|u_2\|_{L^\I_tL^6_x}\|u_1\na u_1\|_{L^{3/2}_tL^2_x}
 \pr\lec \Br{\|\na u_2\|_{L^3_{t,x}}\|u_1\|_{L^\I_tL^6_x}
 + \|u_2\|_{L^\I_tL^6_x}\|\na u_1\|_{L^3_{t,x}}}\|u_1\|_{L^3_tL^6_x}.
}
Thus we complete the proof.
\end{proof}

\begin{lem}[Quartic terms]
We have
\EQN{
\norm{N_4(m,u)}_{N}\les \norm{m}_{X} \norm{u}_{S}^3+\norm{u}_{S}^4.
}
\end{lem}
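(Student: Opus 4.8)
Write out $N_4$ and group it as
\[
N_4(m,u)=U(R^2)-\tfrac14 U(|u|^4)+\frac{2i}{2-\De}\bigl[4u_1Ru_2\bigr]+\frac{2i}{2-\De}\bigl[2u_2m_1R\bigr].
\]
The plan is to place the first two groups and the $m_1$-group, together with a ``benign'' part of $\frac{2i}{2-\De}[4u_1Ru_2]$, in $L^{3/2}_tH^1_{3/2}$, and to place the remaining ``critical'' part of $\frac{2i}{2-\De}[4u_1Ru_2]$ in $L^1_tL^2_x\cap L^{3/2}_t\dot H^1_{3/2}$. Every bound will come from a paraproduct/Leibniz splitting followed by H\"older, Sobolev and Gagliardo--Nirenberg embeddings within the resolution spaces (e.g. $u_1\in L^6_{t,x}$ and $u_1\in L^9_{t,x}$ follow from $u_1\in Y$ by G--N applied with $\nabla u_1\in L^3_{t,x}$ and $u_1\in L^\infty_tL^3_x\cap L^\infty_t\dot H^1_x$, and similarly $u_2\in L^9_{t,x}$ from $u_2\in Z$), plus Lemma \ref{lem:R} for the $R$-factors.

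For the two $U$-prefixed quartics, the point is that $U$ is an $L^p$-multiplier for $1<p<\infty$ that behaves like $D$ near $\x=0$, so these terms gain a derivative at low frequency and lose nothing at high frequency; it therefore suffices to control $R^2$ and $|u|^4$ with one spare derivative. Using the Leibniz rule, H\"older and Lemma \ref{lem:R} (which gives $\|R\|_{L^3_tH^1\cap L^\infty_tL^{3/2}}\les\|u_1\|_Y^2+\|u_2\|_Z^2$) for $U(R^2)$, and the G--N embeddings above for $U(|u|^4)$ (the purely $u_2$-part being absorbed into the homogeneous piece $L^{3/2}_t\dot H^1_{3/2}$ of $N$), one obtains a bound $\les\|u\|_S^4$. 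For the $m_1$-group, $m_1\in X$ carries the $L^\infty_tL^2_x$-control that $u_1,u_2$ lack in the energy space, so using that $\frac1{2-\De}$ maps $H^{-1}_{3/2}\to H^1_{3/2}$ and $L^{6/5}_x\subset H^{-1}_{3/2}$, I would distribute the factors among $m_1\in L^\infty_tL^2_x$, $u_2\in L^\infty_tL^6_x$, $R\in L^3_tL^6_x$ (and the analogous splittings for the derivative), giving $\les\|m\|_X(\|u_1\|_Y^2+\|u_2\|_Z^2)\|u_2\|_Z\les\|m\|_X\|u\|_S^3$.

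The heart of the argument is $\frac{2i}{2-\De}[4u_1Ru_2]$. Split $R=R_2+R_1$ with $R_2=\frac{-\De u_2^2}{2(2-\De)}$ and $R_1=-\frac{(2+\De)u_1^2}{2(2-\De)}$. Since $\De u_2^2=2u_2\De u_2+2|\nabla u_2|^2$ supplies derivative structure on the $u_2$-factors, the $R_2$-contribution $\frac{2i}{2-\De}[4u_1R_2u_2]$ closes in $L^{3/2}_tH^1_{3/2}$ exactly as do the non-critical cubic terms. The $R_1$-contribution $N_4^c:=\frac{2i}{2-\De}[4u_1R_1u_2]$ is, modulo bounded zero-order operators, $u_2$ times a cubic expression in $u_1$, and it is the quartic analogue of the critical term $N_3^c$ of the cubic nonlinearity: here the undifferentiated part of the $H^1_{3/2}$-norm cannot be afforded, because that would force an $L^\infty_t$ (or low-frequency $L^2_x$) bound on $u_2$, which the energy space does not provide. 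Instead I would estimate $N_4^c$ in $L^1_tL^2_x\cap L^{3/2}_t\dot H^1_{3/2}$ as in the $N_3^c$ bound: for the $L^1_tL^2_x$-part spend $u_2\in L^5_tL^{10}_x$ (resp.\ $u_2\in L^\infty_tL^6_x$) on the bare factor and load all the time integrability onto the cubic-in-$u_1$ factor using $u_1\in L^{5/2}_tL^5_x$, $L^3_tL^6_x$, $L^\infty_tL^6_x$ and the $L^p$-boundedness of $\frac1{2-\De}$ and $\frac{2+\De}{2-\De}$; for the $L^{3/2}_t\dot H^1_{3/2}$-part distribute the gradient, placing $\nabla u_2\in L^3_{t,x}$ on one summand and $\nabla u_1\in L^3_{t,x}$ on the others, with the remaining factors in $L^\infty_tL^6_x$ and $L^3_tL^6_x$.

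The main obstacle is precisely this critical piece $N_4^c$ (and, at a lower level, the purely-$u_2$ parts of the $U$-terms): everything hinges on the absence of low-frequency $L^2$-control on $u_2$, and the resolution spaces $Y,Z$ together with the mass-subcritical, spherically averaged Strichartz exponents $L^{5/2}_tL^5_x$, $L^5_tL^{10}_x$, $L^3_{t,x}$, $L^3_tL^6_x$ (and the G--N embeddings between them) are calibrated so that these H\"older chains balance. Checking that the exponents close is the only genuine work; the $U$-prefixed and $m_1$-prefixed terms are routine once one records that $U\sim D$ at low frequency and that $m_1\in X$ supplies the missing $L^2$.
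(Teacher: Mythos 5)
Your estimates would close, but you have misdiagnosed where the difficulty sits, and as a result you do substantially more work than necessary on one term. In the paper's proof \emph{all} of $N_4$ is placed in the inhomogeneous component $L^{3/2}_tH^1_{3/2}$ of $N$: for the terms of the form $\frac{2i}{2-\De}[\cdots]$ the outer $(2-\De)^{-1}$ maps $L^{3/2}_x$ into $H^2_{3/2}\subset H^1_{3/2}$, so one only needs the undifferentiated product in $L^{3/2}_tL^{3/2}_x$, e.g. $\|(2-\De)^{-1}[u_1u_2R]\|_{L^{3/2}_tH^1_{3/2}}\les \|u_1\|_{L^3_tL^6_x}\|u_2\|_{L^\I_tL^6_x}\|R\|_{L^3_{t,x}}$; the only $u_2$-information used is $u_2\in L^\I_tL^6_x$, which follows from $\na u_2\in L^\I_tL^2_x$, so no low-frequency $L^2_x$ control of $u_2$ is required. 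Your claim that $\frac{2i}{2-\De}[4u_1R_1u_2]$ is the quartic analogue of $N_3^c$ ``modulo bounded zero-order operators'' is not right: $N_3^c=-u_2\frac{2+\De}{2-\De}u_1^2$ carries no outer smoothing, whereas every quartic term carries either the outer $(2-\De)^{-1}$ or the prefactor $U$, which supplies a full derivative at low frequency (so that $\|UF\|_{H^1_{3/2}}\sim\|\na F\|_{L^{3/2}_x}$, which is how the paper handles $U(|u|^4)$ and $U(R^2)$). Hence there is no critical quartic term and no need for the $L^1_tL^2_x\cap L^{3/2}_t\dot H^1_{3/2}$ component of $N$ in this lemma. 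Your alternative treatment of that piece does appear to close after the smoothing is taken into account, so this is a detour rather than a gap. One chain of exponents in your sketch does not balance as written: for the $m_1$-group the choice $m_1\in L^\I_tL^2_x$, $u_2\in L^\I_tL^6_x$, $R\in L^3_tL^6_x$ gives only $L^3_t$ integrability in time rather than the required $L^{3/2}_t$; take instead $m_1\in L^3_tL^6_x$ (available since $m,\na m\in L^3_{t,x}$) together with $u_2\in L^\I_tL^6_x$ and $R\in L^3_{t,x}$, as in the paper's model estimate.
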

\begin{proof}

The quartic term $N_4$ is estimated in 
\EQN{
 \pt\|R^2\|_{L^{3/2}_tH^1_{3/2}} \lec \|R\|_{L^3_tH^1}^2, 
 \pr\|U|u|^4\|_{L^{3/2}_tH^1_{3/2}} \lec \|u^3\na u\|_{L^{3/2}_{t,x}}
 \lec \|\na u\|_{L^3_{t,x}}\|u^3\|_{L^3_{t,x}},}
where the last norm is bounded by Sobolev embedding and H\"older inequalities
\EQN{
 \|u^2\na u\|_{L^3_tL^{3/2}_x}
 \lec \|u\|_{L^\I_tL^6_x}^2\|\na u\|_{L^3_{t,x}}.}
The other terms are estimated in the same way as similar terms in $N_3$, such as 
\EQN{
 \|(2-\De)^{-1}[u_1u_2R]\|_{L^{3/2}_tH^1_{3/2}}
 \lec \|u_1\|_{L^3_tL^6_x}\|u_2\|_{L^\I_tL^6_x}\|R\|_{L^3_{t,x}}.}
We complete the proof.
\end{proof}

\begin{lem}[Quintic terms]
We have
\EQN{
\norm{N_5(m,u)}_{N}\les \norm{u}_{S}^5.
}
\end{lem}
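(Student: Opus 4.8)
The plan is to exploit that $N_5(m,u)=\frac{2i}{2-\De}\bigl[u_2R^2-\tfrac14u_2|u|^4\bigr]$ is homogeneous of degree $5$ in $u=(u_1,u_2)$ (recall that $R$ in \eqref{eq:R} is quadratic) and that every monomial carries the smoothing operator $(2-\De)^{-1}=\jb{\na}^{-2}$. Mirroring the treatment of the cubic term, I would split
\[
 N_5=\bigl(N_5-N_5^c\bigr)+N_5^c,\qquad N_5^c:=-\frac{i}{2(2-\De)}\bigl[u_2|u|^4\bigr],
\]
estimate the bulk $N_5-N_5^c=\frac{2i}{2-\De}\bigl[u_2R^2\bigr]$ in $L^{3/2}_tH^1_{3/2}$, and estimate $N_5^c$ in $L^1_tL^2_x\cap L^{3/2}_t\dot H^1_{3/2}$. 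By the definition of $\norm{\cdot}_N$ these two bounds together give $\norm{N_5}_N\les\norm{u}_S^5$.

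For the bulk term I would use that $(2-\De)^{-1}=\jb{\na}^{-2}$ maps $H^{-1}_{3/2}$ into $H^3_{3/2}\subset H^1_{3/2}$, together with the embedding $L^{6/5}_x\hookrightarrow H^{-1}_{3/2}$ on $\R^3$ (since $\jb{\na}^{-1}:L^{6/5}\to L^{3/2}$ is bounded, as $\tfrac56-\tfrac32^{-1}=\tfrac16<\tfrac13$). Hence it suffices to bound $u_2R^2$ in $L^{3/2}_tL^{6/5}_x$, and H\"older gives
\[
 \norm{u_2R^2}_{L^{3/2}_tL^{6/5}_x}\les\norm{u_2}_{L^{9/2}_tL^6_x}\norm{R}_{L^{9/2}_tL^3_x}^2 .
\]
Here Lemma \ref{lem:R} (using $L^3_tH^1_x\hookrightarrow L^3_tL^6_x$ and interpolating against $L^\infty_tL^{3/2}_x$) yields $\norm{R}_{L^{9/2}_tL^3_x}\les\norm{u_2}_Z^2+\norm{u_1}_Y^2$, while $\norm{u_2}_{L^{9/2}_tL^6_x}\les\norm{u_2}_Z$ by Sobolev embedding and interpolation inside $Z$; so this term is $\les\norm{u}_S^5$.

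For $N_5^c$, on which only the $L^5_tL^{10}_x$- and $\na u_2\in L^3_{t,x}$-control on $u_2$ is available, I would argue exactly as for $N_3^c$. Since $\jb{\na}^{-2}$ is bounded on $L^2_x$,
\[
 \norm{N_5^c}_{L^1_tL^2_x}\les\norm{u_2|u|^4}_{L^1_tL^2_x}\le\norm{u_2}_{L^5_tL^{10}_x}\norm{u}_{L^5_tL^{10}_x}^4\les\norm{u}_S^5 ,
\]
where $\norm{u}_{L^5_tL^{10}_x}\les\norm{u}_S$ follows from the $Z$-norm for $u_2$ and, for $u_1$, from $\norm{u_1}_{L^{10}_x}\les\norm{\na u_1}_{L^3_x}^{2/5}\norm{u_1}_{L^6_x}^{3/5}$ together with interpolation inside $Y$. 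Likewise $|\na|(2-\De)^{-1}$, a composition of a Riesz transform with the $L^{3/2}$-multiplier $\x\mapsto\x/(2+|\x|^2)$, is bounded on $L^{3/2}_x$, so
\[
 \norm{N_5^c}_{L^{3/2}_t\dot H^1_{3/2}}\les\norm{u_2|u|^4}_{L^{3/2}_{t,x}}\le\norm{u}_{L^{15/2}_tL^{15/2}_x}^5\les\norm{u}_S^5 ,
\]
using $\norm{u}_{L^{15/2}_tL^{15/2}_x}\les\norm{\na u}_{L^3_{t,x}}^{1/5}\norm{u}_{L^{12}_tL^6_x}^{4/5}\les\norm{u}_S$ (Gagliardo--Nirenberg in $x$, H\"older in $t$, interpolation inside $S$).

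The step I expect to be the crux is the treatment of $N_5^c$: it is the quintic face of the paper's recurring obstruction, namely the absence of an $L^2_x$ bound on the low frequencies of $u_2$, so the $O(u_2^5)$ factor must never be measured in an $L^2$-based norm. The decisive point is that no positive power of $\jb{\na}$ is ever placed on the product $u_2|u|^4$: the derivative in the $\dot H^1_{3/2}$-norm is absorbed entirely by the bounded multiplier $|\na|(2-\De)^{-1}$, leaving $u_2|u|^4$ to be controlled only in the $L^5_tL^{10}_x$- and $L^{15/2}_{t,x}$-type norms supplied by $Z$ and $S$. This is exactly why $N_5^c$ lands in the $L^1_tL^2_x\cap L^{3/2}_t\dot H^1_{3/2}$ component of $N$ rather than in $L^{3/2}_tH^1_{3/2}$; the remaining verifications are a routine matching of H\"older exponents against the chain $X\subset Y\subset Z$.
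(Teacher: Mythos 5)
Your $L^1_tL^2_x$ bound for $N_5^c$ and your reduction of the bulk term $(2-\Delta)^{-1}[u_2R^2]$ to a product estimate are sound, but two of your H\"older/interpolation claims do not follow from the $S$-norm, and one of them is fatal. The non-fatal one: $\|u_2\|_{L^{9/2}_tL^6_x}\lesssim\|u_2\|_Z$ is not available. The only control of $u_2$ at spatial exponent $6$ coming from $Z$ is $L^\infty_tL^6_x$ (via $\nabla u_2\in L^\infty_tL^2_x$), and every $Z$-controlled norm with finite time exponent lives at spatial exponent strictly greater than $6$ ($L^5_tL^{10}_x$, or the Gagliardo--Nirenberg family built from $\nabla u_2\in L^3_{t,x}$); no interpolation reaches $(q,r)=(9/2,6)$. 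This is easily repaired by putting $u_2$ in $L^\infty_tL^6_x$ and both factors of $R$ in $L^3_tL^3_x\subset L^3_tH^1_x$ (spatial exponents $\tfrac16+\tfrac13+\tfrac13=\tfrac56$, time $0+\tfrac13+\tfrac13=\tfrac23$), which is essentially the paper's choice.

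The fatal step is your $L^{3/2}_t\dot H^1_{3/2}$ estimate for $N_5^c$: it requires $u_2^5\in L^{3/2}_{t,x}$, i.e.\ $u_2\in L^{15/2}_{t,x}$, and your justification passes through $u_2\in L^{12}_tL^6_x$, which fails for the same reason as above. Indeed $(\tfrac1q,\tfrac1r)=(\tfrac2{15},\tfrac2{15})$ lies outside the convex hull of exponent pairs controlled by $Z$: at spatial exponent $15/2$ the best time exponent obtainable (interpolating $L^5_tL^{10}_x$ against $L^\infty_tL^6_x$) is $10$, and $5\times\tfrac1{10}=\tfrac12<\tfrac23$, so H\"older only gives $u_2^5\in L^2_tL^{3/2}_x\not\subset L^{3/2}_{t,x}$. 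This is precisely the quintic low-frequency obstruction the paper flags: one must never demand more time decay of $u_2^5$ than $L^5_tL^{10}_x$ supplies. Accordingly, the paper does not attempt the $L^{3/2}_t\dot H^1_{3/2}$ component for $N_5^c$ at all; it estimates $N_5^c$ in $L^1_tH^1_x$, absorbing the full $H^1$ regularity into $(2-\Delta)^{-1}$ via $L^2_x+L^{6/5}_x\subset H^{-1}_x$ and bounding $\|u_2^5\|_{L^1_tL^2_x}\lesssim\|u_2\|_{L^5_tL^{10}_x}^5$ together with $\|u_2u_1^4\|_{L^1_tL^{6/5}_x}$ for the mixed terms; $L^1_tH^1_x$ serves as the admissible dual Strichartz norm for this term in place of the intersection $L^1_tL^2_x\cap L^{3/2}_t\dot H^1_{3/2}$. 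You should replace your $\dot H^1_{3/2}$ step by this $L^1_tH^1_x$ bound; your $L^1_tL^2_x$ computation already contains most of the work.
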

\begin{proof}
For the quintic term $N_5$, the critical term is 
\EQN{
 \pq N_5^c(u):=-\frac{i}{8(2-\De)}[u_2|u|^4].}
The subcritical part is estimated as above 
\EQN{
 \|(2-\De)^{-1}[u_2R^2]\|_{L^{3/2}_tH^1_{3/2}}
 \lec \|u_2\|_{L^\I_tL^6_x}\|R\|_{L^3_{t,x}}\|R\|_{L^3_tL^6_x}.}
The critical term is estimated using $L^2_x+L^{6/5}_x\subset H^{-1}_x$ 
\EQN{
 \|N_5^c(u)\|_{L^1_tH^1_x}
 \pt\lec \|u_2|u|^4\|_{L^1_t(L^2_x+L^{6/5}_x)}
 \lec \|u_2^5\|_{L^1_tL^2_x}+\|u_2u_1^4\|_{L^1_tL^{6/5}_x}
 \pr\lec \|u_2\|_{L^5_xL^{10}_x}^5 + \|u\|_{L^\I_tL^6_x}^2\|u_1\|_{L^3_tL^6_x}^3.}
We complete the proof.
\end{proof}

\subsection{Proof of the theorem}

Now we prove Theorem \ref{thm:rad}. First we note that the transformation 
\begin{align*}
u\to m=m_1+im_2=T(u):=u_1 + \frac{2u_1^2+u_2^2}{2-\De}+iUu_2
\end{align*}
is a  homeomorphism between small balls with center $0$ in $(\E,d_\E)$ and $(H^1,\norm{\cdot}_{H^1})$ (The proof is similar to \cite{GNT3}). Thus if $u_0\in \E$ with $E(u_0)\ll 1$ then we have $m(0)\in H^1$ with a small norm.  

Fix $m_0=m(0)$, we define an operator $\Phi_{m_0}(m,u_1,u_2)$ by
the right-hand side of \eqref{eq:GPint}. Our
resolution space is
\[W_\eta=\{(m,u_1,u_2): \norm{(m,u_1,u_2)}_W=\norm{m}_{X}+\frac{1}{100}\norm{u_1}_{Y}+\frac{1}{100}\norm{u_2}_{Z}\leq \eta\}\]
endowed with the norm metric $\norm{\cdot}_W$.

By the linear estimates and the nonlinear estimates proved in the previous subsection, we can easily show that $\Phi_{m_0}:W_\eta\to W_\eta$ is a
contraction mapping by the condition $\norm{m_0}_{H^1}\ll 1$ and choosing suitable $\eta\ll 1$. So we get a unique solution $(m,u)$ in $W_\eta$. Moreover, since our estimates are time global, by the standard techniques we get $m$ scatters in $H^1$. Namely, $\exists \phi_\pm\in H^1$ such that
\[\lim_{t\to \pm \infty}\norm{m-e^{-itH}\phi_\pm}_{H^1}=0.\]
By the second and third equation in \eqref{eq:GPint} we get
\begin{align}\label{eq:scatu}
\lim_{t\to \pm \infty}\norm{u_1-\re(e^{-itH}\phi_\pm)}_{\dot H^1\cap \dot H^{1/2}}+\norm{u_2-U^{-1}\im(e^{-itH}\phi_\pm)}_{\dot H^1}=0.
\end{align}
Using the transformation $m=T(u)$ we get
\begin{align*}
2m_1=2u_1+|u|^2+\frac{\Delta |u|^2}{2-\Delta}+\frac{2u_1^2}{2-\Delta}.
\end{align*}
By the similar arguments as for Lemma \ref{lem:u1}, we can prove
\begin{align}
\normo{\frac{\Delta |u|^2}{2-\Delta}}_{L_t^\infty L_x^2}+\normo{\frac{2u_1^2}{2-\Delta}}_{L_t^\infty L_x^2}\les \norm{u_1}_{Y}^2+\norm{u_2}_{Z}^2. 
\end{align}
With this and \eqref{eq:scatu} we get
\[
\lim_{t\to \pm \infty}\normo{\frac{\Delta |u|^2}{2-\Delta}}_{L_x^2}+\normo{\frac{2u_1^2}{2-\Delta}}_{L_x^2}=0,\]
and then we get 
\[\lim_{t\to \pm \infty}\norm{2u_1+|u|^2-2\re(e^{-itH}\phi_\pm)}_{L^2}=0.\]
Therefore we complete the proof of Theorem \ref{thm:rad}.

\end{document}